  \def\@wrindex#1{%
    \protected@write\@indexfile{}%
      {\string\indexentry{#1}{ \S\thesubsection (p.\thepage)}}
    \endgroup
  \@esphack}
\author{Benjamin Enriquez et Florence Lecomte}
\address{Institut de Recherche Math\'{e}matique Avanc\'{e}e, UMR 7501, 
Universit\'{e} de Strasbourg et CNRS, 7
rue Ren\'{e} Descartes, 67000 Strasbourg, France}
\email{enriquez@math.unistra.fr}
\email{lecomte@math.unistra.fr}
\date{3 janvier 2023}
\newtheorem{thm}{Théorème}[section]
\newtheorem{lem}[thm]{Lemme}
\newtheorem{lemdef}[thm]{Lemme-Définition}
\newtheorem{cor}[thm]{Corollaire}
\newtheorem{prop}[thm]{Proposition}
\theoremstyle{definition} \newtheorem{rem}[thm]{Remarque}}
\theoremstyle{definition} \newtheorem{defn}[thm]{Définition}}
\theoremstyle{remark} }
\numberwithin{equation}{subsection}
\numberwithin{figure}{section}
\begin{document}

\baselineskip 16pt 

\title[Transformations naturelles reliant foncteurs homotopique et 
homologique]{Transformations naturelles reliant foncteurs d'homotopie 
et d'homologie singulière}

\begin{abstract}
La catégorie des espaces topologiques avec deux points marqués est munie de deux familles $\mathbf F_n$ et $\mathbf H_n$, 
indexées par un entier 
$n\geq0$, de foncteurs vers la catégorie des groupes abéliens, la première associant à l'objet $(X,x,y)$ le quotient de 
$\mathbb Z\pi_1(X,x,y)$ par un sous-groupe abélien associé à la $n+1$-ième puissance de d'idéal d'augmentation de l'algèbre de 
groupe $\mathbb Z\pi_1(X,x)$, la seconde associant au même objet le $n$-ième groupe d'homologie singulière relative de $X^n$ par rapport à un 
sous-espace défini en termes de diagonales partielles. Nous construisons une famille de transformations naturelles $\nu_n : \mathbf F_n\to 
\mathbf H_n$. Nous identifions la transformation naturelle obtenue par restriction de $\nu_n$ à la sous-catégorie des variétés algébriques et 
tensorisation avec $\mathbb Q$ avec l'équivalence naturelle due à Beilinson. %Enfin, nous construisons une transformation naturelle $\mathbf H_{n+1}\to\mathbf H_n$ compatible aux transformations naturelles $\mathbf F_{n+1}\to\mathbf F_n$ et $\nu_{n+1},\nu_n$.  
\end{abstract}

\bibliographystyle{amsalpha+}
\maketitle
{\footnotesize \setcounter{tocdepth}{3}\tableofcontents}

\newpage

\section*{Introduction}

Pour $X$ un espace topologique connexe et $a,b \in X$, on note $\pi_1(a,b)$ l'ensemble des classes de chemins reliant $a$ à $b$. 
Le $\mathbb Z$-module $\mathbb Z\pi_1(a,b)$ est alors un module à droite sous l'action de l'algèbre du groupe $\pi_1(a):=\pi_1(a,a)$, 
et on définit, pour $n\geq 0$, le $\mathbb Z$-module $\mathbf F_n(X,a,b)$ comme son quotient par le sous-module 
$\mathbb Z\pi_1(a,b)\cdot(\mathbb Z\pi_1(a))_+^{n+1}$ engendré par l'action de la $n+1$-ème puissance de l'idéal d'augmentation 
de l'algèbre de groupe de $\pi_1(a)$. 

Si $X$ est une variété différentiable connexe, ayant le type d’homotopie d’un CW-complexe fini, on dispose 
d'une interprétation cohomologique de $\mathrm{Hom}_{\mathbb Z}(\mathbf F_n(X,a,b),\mathbb Q)$, sous la forme
d'un isomorphisme
de $\mathbb Q$-espaces vectoriels
\begin{equation}\label{iso:beil}
    \mathrm H^n(X^n,Y^{(n)}_{ba};\mathbb Q)\stackrel{\sim}{\to} \mathrm{Hom}_{\mathbb Z}(\mathbf F_n(X,a,b),\mathbb Q), 
\end{equation}
où $Y^{(n)}_{ba}$ est la partie de $X^n$ définie par 
\begin{equation}\label{ref:Y:2212}
Y^{(n)}_{ba}:=\cup_{i=0}^n Y_{ba,i}^{(n)}
\end{equation}
avec  $Y_{ba,i}^{(n)}=\{(x_1,\ldots,x_n)\in X^n|x_i=x_{i+1}\}$ avec $x_0=b$ et $x_{n+1}=a$, 
et $\mathrm H^\bullet(-,-;\mathbb Q)$ désigne la cohomologie singulière relative à coefficients 
dans $\mathbb Q$ (travail de Beilinson, rédigé dans \cite{DG,BGF}). La construction de cet isomorphisme repose sur des techniques faisceautiques : 
précisément, on construit un morphisme $_b\tilde{\mathcal K}_a\to _b\mathcal K_a$ de complexes de faisceaux sur $X$ et un 
isomorphisme $iso_{\mathrm{BGF}}^{ba} : \mathbb H^n(X^n,_b \tilde{\mathcal K}_a\langle n\rangle)
\to \mathrm H^n(X^n,Y^{(n)}_{ba};\mathbb Q)$, où $\mathbb H(-,-)$ désigne l'hypercohomologie des complexes de faisceaux 
(\cite{BGF}, lemme 3.281) ; \eqref{iso:beil} est alors construit comme une composition 
$$
\mathrm H^n(X^n,Y^{(n)}_{ba};\mathbb Q)
\stackrel{(iso_{\mathrm{BGF}}^{ba})^{-1}}{\to}\mathbb H^n(X^n,_b \tilde{\mathcal K}_a\langle n\rangle)
\to\mathbb H^n(X^n,_b\mathcal K_a\langle n\rangle)
\to\mathrm{Hom}_{\mathbb Z}(\mathbf F_n(X,a,b),\mathbb Q). 
$$

La nature topologique de la source et du but de l'application \eqref{iso:beil} suggère la possibilité 
d'une construction topologique de cette application. Le but de ce travail est de fournir une telle construction, 
dans le cadre plus général où $X$ est un espace topologique, et en travaillant sur $\mathbb Z$. 
Plus précisément, nous contruisons un morphisme de $\mathbb Z$-modules  
$$
\mathbf F_n(X,a,b)\to\mathrm H_n(X^n,Y^{(n)}_{ab}), 
$$
où $\mathrm H_n(-,-)$ est l'homologie singulière relative (à coefficients dans $\mathbb Z$), cf. théorème \ref{thm:ppal}. 
Nous montrons, dans le cas où $X$ est une variété différentiable comme ci-dessus, la compatibilité de ce morphisme 
avec \eqref{iso:beil} et l'isomorphisme $\mathrm H_n(X^n,Y^{(n)}_{ab})\to \mathrm H_n(X^n,Y^{(n)}_{ba})$ induit par 
l'automorphisme de $X^n$ donné par $(x_1,\ldots,x_n)\mapsto(x_n,\ldots,x_1)$ (cf. proposition \ref{prop:beil:2012}). 

Ce travail est organisé comme suit : la section \ref{sect:rappels} contient des rappels sur l'homologie singulière~; la section principale 
est la section \ref{sect:ppale}, qui a pour objectif la démonstration du théorème \ref{thm:ppal} ; la section \ref{sect:3:2512} établit le 
lien de ce 
résultat avec l'isomorphisme \eqref{iso:beil} de Beilinson (proposition \ref{prop:beil:2012}) ; en section \ref{sect:4:2512}, on étudie
l'aspect fonctoriel de l'application construite dans le théorème \ref{thm:ppal}.  

\section{Rappels}\label{sect:rappels}

En section \ref{sect:1:1:2512}, on rappelle la construction de l'homologie singulière, et en section \ref{sect:1:2:2512}, 
celles de l'homologie et la cohomologie relatives.   

\subsection{Espaces topologiques et homologie singulière}\label{sect:1:1:2512}

On note $\mathbf{Top}$ la catégorie des espaces topologiques ; si $X,Y$ sont deux espaces topologiques, on note ainsi 
$\mathbf{Top}(X,Y)$ l'ensemble des applications continues $X\to Y$. 

Pour $n\geq 0$, on note $\Delta^n$ le simplexe donné par $\Delta^n:=\{(t_1,\ldots,t_n)\in\mathbb R^n|0\leq t_1\leq\ldots\leq t_n\leq 1\}$. 
Si $X$ est un espace topologique, on pose $C_n(X):=\mathbb Z\mathbf{Top}(\Delta^n,X)$ pour $n\geq0$, ainsi que $C_{-1}(X)=0$. 
Un sous-ensemble $Y$ de $X$ est naturellement muni de la topologie induite, on note $Y_X$ (ou simplement $Y$ s'il n'y a pas de risque 
de confusion) l'espace topologique correspondant. On a alors $\mathbf{Top}(\Delta^n,Y_X)=\{f\in \mathbf{Top}(\Delta^n,X)|
f(\Delta^n)\subset Y\}$ et $C_n(Y_X):=\mathbb Z\mathbf{Top}(\Delta^n,Y_X)\subset C_n(X)$ pour $n\geq0$. 

Pour $X$ un espace topologique et $n\geq0$, on note $\partial_{n,n-1}^* : C_n(X)\to C_{n-1}(X)$ la différentielle singulière. L'homologie du complexe 
$(C_\bullet(X),\partial^*)$ est alors l'homologie singulière $\mathrm H_\bullet(X)$. 

On note %$\mathbf{Ab}$ la catégorie des groupes abéliens, 
$\mathbf{AbGr}$ celle des groupes abéliens $\mathbb Z$-gradués. 
L'homologie singulière définit un foncteur $\mathrm H_\bullet : \mathbf{Top}\to\mathbf{AbGr}$. 

\subsection{Paires d'espaces topologiques et (co)homologie singulière relative}\label{sect:1:2:2512}

Si $X$ est un espace topologique et $Y$ est un sous-ensemble de $X$, alors $(C_\bullet(Y_X),\partial^*)$ est un sous-complexe de 
$(C_\bullet(X),\partial^*)$, et l'homologie relative $\mathrm H_\bullet(X,Y)$ est l'homologie du complexe quotient 
$$
(C_\bullet(X)/C_\bullet(Y_X),\partial^*) ;  
$$
c'est un groupe abélien gradué. La cohomologie relative $\mathrm H^\bullet(X,Y)$ est celle du sous-complexe 
$C_\bullet(Y_X)^\perp$ du complexe $\mathrm{Hom}_{\mathbb Z}(C_\bullet(X),\mathbb Z)$, muni de la différentielle
duale de $\partial^*$ ; on dispose d'un couplage $\mathrm H_\bullet(X,Y)\otimes\mathrm H^\bullet(X,Y)\to\mathbb Z$. Alors 
$\mathrm H^\bullet(X,Y;\mathbb Q)=\mathrm H^\bullet(X,Y)\otimes\mathbb Q$ est la cohomologie 
du sous-complexe $C_\bullet(Y_X)^\perp\otimes\mathbb Q$ de $\mathrm{Hom}_{\mathbb Z}(C_\bullet(X),\mathbb Q)$. 

Soit $\mathbf{Paires}$ la catégorie des paires d'espaces topologiques. Les  objets de $\mathbf{Paires}$ sont les couples $(X,Y)$, 
avec $X$ espace topologique et $Y$ sous-ensemble de $X$. On a 
$\mathbf{Paires}((A,B),(A',B')):=\{f\in \mathbf{Ens}(A,A')|f$ est continue et $f(B)\subset B'\}$. 
Alors $(X,Y)\mapsto \mathrm H_\bullet(X,Y)$ définit un foncteur 
$\mathrm{H}_\bullet : \mathbf{Paires}\to\mathbf{AbGr}$. 
On note $f_*$ le morphisme $\mathrm{H}_\bullet(A,B)\to \mathrm{H}_\bullet(A',B')$ dans $\mathbf{AbGr}$ associé à un morphisme 
$f:(A,B)\to(A',B')$ dans $\mathbf{Paires}$ (noté $\mathrm{H}_\bullet(f)$ dans \cite{Ha}, pp. 108, 124). 

%La catégorie $\mathbf{Paires}$ est une catégorie tensorielle, le produit étant donné sur les objets par $(A,B)\times(A',B'):=(A\times A',A\times 
%B'\cup A'\times B)$, et l'objet unité étant $(*,\emptyset)$. Le foncteur $\mathrm{H}_\bullet$ a une structure de foncteur monoïdal donnée par la 
%collection de morphismes de groupes abéliens 
%gradués $\mathrm{H}_\bullet(A,B)\otimes \mathrm{H}_\bullet(A',B')\to \mathrm{H}_\bullet((A,B)\times(A',B'))$, notée $h\otimes h'
%\mapsto h\cdot h'$ (\cite{McL}). 

L'application identité $\mathrm{id}_{\Delta^n}$ définit un élément de $C_n(\Delta^n)$, dont l'image dans $C_n(\Delta^n)/C_n(\partial\Delta^n)$ 
est un cycle pour le bord relatif, et définit donc  un élément du groupe d'homologie relative $[\mathrm{id}_{\Delta^n}]\in 
\mathrm{H}_n(\Delta^n,\partial\Delta^n)$. 

\section{Une identité en homologie relative}\label{sect:ppale}

Le but de cette section est la démonstration du théorème \ref{thm:ppal}. Ce résultat est formulé en section \ref{sect:res:ppal}, 
et sa première partie (théorème \ref{thm:ppal}(a)) est démontrée en section \ref{sect:2:2:2512}. Le reste de la section est consacré à la démonstration
de sa deuxième partie (théorème \ref{thm:ppal}(b)).  La section \ref{sect:2:3:2512} contient des résultats combinatoires, et la section \ref{sect:2:4:2512}
l'application de ses résultats à la construction d'endomorphismes $\mathrm{div}_\bullet^k$ des complexes de chaînes singulières satisfaisant 
la relation d'homotopie \eqref{132:2809}. Cette relation est appliquée en section \ref{1214:2111} à la démonstration du théorème \ref{thm:ppal}(b). 

\subsection{Matériel de base et résultat principal}\label{sect:res:ppal}

Dans la section \ref{sect:res:ppal}, on fixe un espace topologique $X$, des éléments $a,b\in X$, et $n\geq1$. On définit la partie 
$Y^{(n)}_{ab}\subset X^n$ par \eqref{ref:Y:2212}. 
%\begin{equation}\label{ref:Y:2212}
%Y^{(n)}_{ab}:=\cup_{i=0}^n Y_{ab,i}^{(n)}
%\end{equation}
%avec  $Y_{ab,i}^{(n)}=\{(x_1,\ldots,x_n)\in X^n|x_i=x_{i+1}\}$ avec $x_0=a$ et $x_{n+1}=b$. 
On note $\mathrm{Chem}(a,b)$ l'ensemble des applications continues $\tilde\gamma : [0,1]\to X$ telles que 
$\tilde\gamma(0)=a$ et $\tilde\gamma(1)=b$. 

\begin{defn}\label{def:2:1:2311}
Pour $\tilde\gamma\in\mathrm{Chem}(a,b)$, on note $\tilde\gamma^{(n)}\in\mathbf{Top}(\Delta^n,X^n)$ l'application composée 
$\Delta^n\stackrel{\mathrm{can}_n}{\to}[0,1]^n\stackrel{\tilde\gamma^n}{\to}X^n$ où $\mathrm{can}_n : \Delta^n\to[0,1]^n$ est l'injection canonique. 
\end{defn}

\begin{lem}\label{lem:1:2}
$\partial_{n,n-1}^*(\tilde\gamma^{(n)})\in C_{n-1}(Y^{(n)}_{ab})$.  
\end{lem}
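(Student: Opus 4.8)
Le plan est de ramener l'énoncé à une inspection explicite des faces du simplexe ordonné $\Delta^n$. Je commencerais par expliciter $\tilde\gamma^{(n)}$ : comme $\mathrm{can}_n$ est l'inclusion $\{0\leq t_1\leq\ldots\leq t_n\leq1\}\hookrightarrow[0,1]^n$ et $\tilde\gamma^n$ le produit de $\tilde\gamma$ par elle-même, on a $\tilde\gamma^{(n)}(t_1,\ldots,t_n)=(\tilde\gamma(t_1),\ldots,\tilde\gamma(t_n))$. J'utiliserais ensuite le fait que la différentielle singulière $\partial_{n,n-1}^*(\tilde\gamma^{(n)})$ est la somme alternée $\sum_{i=0}^n(-1)^i\,\tilde\gamma^{(n)}\circ d_i$ des restrictions de $\tilde\gamma^{(n)}$ aux faces de codimension $1$ de $\Delta^n$. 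Il suffit donc, par additivité, de montrer que chacun de ces $n+1$ simplexes singuliers est à valeurs dans $Y^{(n)}_{ab}$.

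Le point central est l'identification des faces de $\Delta^n=\{0\leq t_1\leq\ldots\leq t_n\leq1\}$. Ce simplexe est découpé par les $n+1$ inégalités $0\leq t_1$, $t_i\leq t_{i+1}$ pour $1\leq i\leq n-1$, et $t_n\leq1$, de sorte que ses faces correspondent aux $n+1$ égalités $t_1=0$, $t_i=t_{i+1}$ et $t_n=1$ ; concrètement, les applications de face sont $d_0(t)=(0,t_1,\ldots,t_{n-1})$, $d_i(t)=(t_1,\ldots,t_i,t_i,\ldots,t_{n-1})$ pour $1\leq i\leq n-1$, et $d_n(t)=(t_1,\ldots,t_{n-1},1)$. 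Je vérifierais alors face par face que l'image tombe dans la pièce d'indice correspondant : sur $d_0$, la première coordonnée de l'image vaut $\tilde\gamma(0)=a=x_0$, d'où un point de $Y^{(n)}_{ab,0}$ ; sur $d_i$, on a $x_i=\tilde\gamma(t_i)=\tilde\gamma(t_{i+1})=x_{i+1}$, d'où un point de $Y^{(n)}_{ab,i}$ ; sur $d_n$, la dernière coordonnée vaut $\tilde\gamma(1)=b=x_{n+1}$, d'où un point de $Y^{(n)}_{ab,n}$. Chaque terme $\tilde\gamma^{(n)}\circ d_i$ appartient ainsi à $C_{n-1}(Y^{(n)}_{ab})$, et donc la somme alternée aussi, ce qui conclut.

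La seule subtilité, plutôt qu'une réelle difficulté, réside dans le bon appariement des indices des faces du simplexe ordonné avec les pièces $Y^{(n)}_{ab,i}$, et dans le respect de la convention de bord $x_0=a$, $x_{n+1}=b$ propre à $Y^{(n)}_{ab}$ (à ne pas confondre avec la version $ba$ de \eqref{ref:Y:2212}). Une fois cette convention fixée, le calcul est purement routinier : aucune hypothèse sur $X$ ni sur $\tilde\gamma$ au-delà de la continuité et des conditions aux bords $\tilde\gamma(0)=a$, $\tilde\gamma(1)=b$ n'intervient, et l'argument reste valable pour tout $n\geq1$ (le cas $n=1$ n'ayant pas de face diagonale, seuls les deux bords $t_1=0$ et $t_1=1$ contribuant).
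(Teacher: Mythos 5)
Votre démonstration est correcte et suit essentiellement le même chemin que celle de l'article : décomposition de $\partial_{n,n-1}^*(\tilde\gamma^{(n)})$ en somme alternée sur les faces $\partial_i^n$ de $\Delta^n$, puis vérification, face par face, que l'image de $\tilde\gamma^{(n)}\circ\partial_i^n$ est contenue dans la pièce $Y^{(n)}_{ab,i}$ (avec les conventions $x_0=a$, $x_{n+1}=b$, correctement appariées aux faces $t_1=0$, $t_i=t_{i+1}$, $t_n=1$). La seule différence, purement cosmétique, est que l'article formalise cette vérification par la relation de commutation $\tilde\gamma^{(n)}\circ\partial_i^{n}=\partial_i^{n,X}\circ\tilde\gamma^{(n-1)}$ et l'égalité $\partial_i^{n,X}(X^{n-1})=Y_{ab,i}^{(n)}$, là où vous calculez directement l'image point par point.
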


\begin{proof}
On a $\partial_{n,n-1}^*(\tilde\gamma^{(n)})=\sum_{i=0}^n(-1)^i\tilde\gamma^{(n)}\circ\partial_i^{n}$, où pour
$i\in[\![0,n]\!]$ l'application $\partial_i^{n} : \Delta^{n-1}\to\Delta^n$ est donnée par $(t_1,\ldots,t_{n-1})
\mapsto (t_1,\ldots,t_i,t_i,\ldots,t_{n-1})$ (avec par convention $t_0=0$, $t_n=1$). On vérifie que 
$\tilde\gamma^{(n)}\circ\partial_i^{n}=\partial_i^{n,X}\circ\tilde\gamma^{(n-1)}$, avec 
$\partial_i^{n,X} : X^{n-1}\to X^n$ l'application donnée par $(x_1,\ldots,x_{n-1})
\mapsto (x_1,\ldots,x_i,x_i,\ldots,x_{n-1})$ (avec par convention $x_0=a$, $x_n=b$), donc 
$\partial_{n,n-1}^*(\tilde\gamma^{(n)})=\sum_{i=0}^n(-1)^i\partial_i^{n,X}\circ\tilde\gamma^{(n-1)}$. 
On a pour tout $i\in[\![0,n]\!]$ les relations $\partial_i^{n,X}(X^{n-1})=Y_{ab,i}^{(n)}\subset Y^{(n)}_{ab}$
qui impliquent la relation annoncée.  
\end{proof}

On rappelle que $C_\bullet(Y^{(n)}_{ab})$ est un sous-complexe de $C_\bullet(X^n)$, et que l'homologie du complexe 
quotient $C_\bullet(X^n)/C_\bullet(Y^{(n)}_{ab})$ est l'homologie relative $\mathrm{H}_\bullet(X^n,Y^{(n)}_{ab})$.  

Il suit du lemme \ref{lem:1:2} que la classe de $\tilde\gamma^{(n)}$ dans $C_n(X^n)/C_n(Y^{(n)}_{ab})$ est un cycle du complexe quotient, et
définit donc une classe $[\tilde\gamma^{(n)}]\in \mathrm{H}_n(X^n,Y^{(n)}_{ab})$. On note $\pi_1(a,b)$ le quotient de 
$\mathrm{Chem}(a,b)$ par la relation d'équivalence donnée par l'homotopie entre deux chemins. 

\begin{thm}\label{thm:ppal}
(a) Il existe une unique application $\pi_1(a,b)\to \mathrm{H}_n(X^n,Y^{(n)}_{ab})$, $\gamma\mapsto F_n(\gamma)$ telle que l'application 
$\mathrm{Chem}(a,b)\to \mathrm{H}_n(X^n,Y^{(n)}_{ab})$, $\tilde\gamma\mapsto[\tilde\gamma^{(n)}]$ admette une factorisation 
$\mathrm{Chem}(a,b)\to \pi_1(a,b)\to\mathrm{H}_n(X^n,Y^{(n)}_{ab})$. On a donc pour $\tilde\gamma\in \mathrm{Chem}(a,b)$
et on notant $\tilde\gamma\to[\tilde\gamma]$ l'application canonique $\mathrm{Chem}(a,b)\to\pi_1(a,b)$, 
\begin{equation}\label{***:2809}
    [\tilde\gamma^{(n)}]=F_n([\tilde\gamma]). 
\end{equation}

(b) Pour $\alpha_0,\ldots,\alpha_n\in \pi_1(a)$, et $I\subset [\![0,n]\!]$, on pose $\prod_{i\in I}\alpha_i$ le produit
$\alpha_{i(1)}\cdots\alpha_{i(|I|)}$, où $i$ est l'unique bijection croissante $[\![1,|I|]\!]\to I$. Alors 
$$
\sum_{I\subset[\![0,n]\!]}(-1)^{|I|}F_n(\gamma\cdot \prod_{i\in I}\alpha_i)=0
$$
(égalité dans $\mathrm{H}_n(X^n,Y^{(n)}_{ab})$). On a donc factorisation de $F_n$ en une application linéaire 
$$
\overline F^{(n)}_{ab} : \mathbb Z\pi_1(a,b)/(\mathbb Z\pi_1(a,b)(\mathbb Z\pi_1(a))_+^{n+1})\to \mathrm{H}_n(X^n,Y^{(n)}_{ab}), 
$$
où $(\mathbb Z\pi_1(a))_+$ est l'idéal d'augmentation de $\mathbb Z\pi_1(a)$. 
\end{thm}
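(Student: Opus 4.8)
Le plan est de déduire les deux assertions de (b) d'une unique identité au niveau des chaînes, puis de démontrer cette identité par un argument de subdivision dont le cœur combinatoire est un principe des tiroirs. Je commencerais par la réduction algébrique élémentaire. Le développement du générateur du sous-module $\mathbb Z\pi_1(a,b)(\mathbb Z\pi_1(a))_+^{n+1}$ donne
\[
\gamma\cdot\prod_{i=0}^n(\alpha_i-1)=\sum_{I\subset[\![0,n]\!]}(-1)^{n+1-|I|}\,\gamma\cdot\prod_{i\in I}\alpha_i,
\]
de sorte que l'extension $\mathbb Z$-linéaire de $F_n$ annule ce générateur si et seulement si $\sum_{I}(-1)^{|I|}F_n(\gamma\cdot\prod_{i\in I}\alpha_i)=0$ ; les deux énoncés de (b) sont donc équivalents, et il suffit de démontrer l'annulation précédente. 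D'après la partie (a), on a $F_n(\gamma\cdot\prod_{i\in I}\alpha_i)=[(\tilde\gamma_I)^{(n)}]$ pour tout représentant $\tilde\gamma_I\in\mathrm{Chem}(a,b)$ de $\gamma\cdot\prod_{i\in I}\alpha_i$, si bien que l'objectif devient la construction d'une chaîne $c\in C_{n+1}(X^n)$ telle que
\[
\partial_{n+1,n}^*(c)\equiv\sum_{I\subset[\![0,n]\!]}(-1)^{|I|}(\tilde\gamma_I)^{(n)}\pmod{C_n(Y^{(n)}_{ab})},
\]
où l'on peut choisir les $\tilde\gamma_I$ comme d'authentiques concaténations de représentants fixés $\tilde\alpha_0,\dots,\tilde\alpha_n$ des $\alpha_i$ avec un représentant fixé $\tilde\gamma$ de $\gamma$.

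L'idée géométrique sous-jacente à $c$ est le comportement des chaînes $(-)^{(n)}$ vis-à-vis de la concaténation. Pour un chemin obtenu en concaténant plusieurs segments, le simplexe $\Delta^n=\{t_1\le\cdots\le t_n\}$ se décompose en les régions enregistrant, de façon monotone, dans quel segment tombe chaque paramètre $t_j$ ; sur chaque région, $(-)^{(n)}$ se restreint en un produit des applications de segments placées dans des blocs consécutifs de coordonnées, et la triangulation de ces produits (mélanges d'Eilenberg--Zilber, contenu combinatoire de la section \ref{sect:2:3:2512}) les réécrit comme de véritables $n$-simplexes. Le passage à la somme alternée sur $I$ est une différence finie d'ordre $n+1$ dans les lacets $\alpha_0,\dots,\alpha_n$, qui isole les termes de la subdivision où chaque lacet reçoit au moins un paramètre. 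C'est là le point crucial : il y a $n+1$ lacets mais seulement $n$ paramètres $t_1,\dots,t_n$, de sorte qu'aucun tel terme n'existe. Par conséquent, après la somme alternée, chaque morceau survivant, ou bien s'annule par paires, ou bien possède un bloc de coordonnées contraint d'être constant au point base $a$ — de façon équivalente deux coordonnées consécutives coïncident, ou bien $x_1=a$ au départ, ou bien $x_n=b$ à l'arrivée — et appartient donc à l'une des diagonales partielles $Y^{(n)}_{ab,i}\subset Y^{(n)}_{ab}$.

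Pour transformer cette heuristique en le bord relatif explicite ci-dessus, j'introduirais en section \ref{sect:2:4:2512} les endomorphismes $\mathrm{div}^k_\bullet$ du complexe des chaînes singulières qui encodent la subdivision en $k$ segments, ainsi que la relation d'homotopie \eqref{132:2809} qu'ils vérifient ; appliquée à $\tilde\gamma^{(\bullet)}$ et aux $\tilde\alpha_i$ (section \ref{1214:2111}), cette relation exhibe une chaîne $c$ dont le bord est la somme alternée voulue, à un terme d'erreur près qui, d'après l'analyse des faces décrite ci-dessus, appartient à $C_n(Y^{(n)}_{ab})$. Une fois l'annulation établie, la factorisation à travers $\mathbb Z\pi_1(a,b)/(\mathbb Z\pi_1(a,b)(\mathbb Z\pi_1(a))_+^{n+1})$ est automatique, d'où $\overline F^{(n)}_{ab}$.

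L'obstacle principal est précisément cette comptabilité au niveau des chaînes. Si le principe des tiroirs garantit qu'aucun terme de dimension maximale ne survit à la somme alternée, il reste à vérifier que chaque face de bord produite par la subdivision est prise en compte : les faces où un paramètre franchit un point de concaténation (où le chemin se trouve en $a$) et les faces où deux paramètres consécutifs se rencontrent doivent, ou bien se télescoper deux à deux avec des signes compatibles, ou bien s'envoyer dans une diagonale partielle. Calibrer les opérateurs $\mathrm{div}^k_\bullet$ et la relation \eqref{132:2809} de sorte que ces annulations soient exactes — et suivre les signes provenant des mélanges et de $(-1)^{|I|}$ — constitue le cœur technique de l'argument, et c'est la raison pour laquelle les préliminaires combinatoires de la section \ref{sect:2:3:2512} sont développés en premier.
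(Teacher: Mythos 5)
Votre proposition est correcte et suit essentiellement la même démarche que l'article : réduction à l'identité $\sum_{I}(-1)^{|I|}F_n(\gamma\cdot\prod_{i\in I}\alpha_i)=0$, opérateurs de subdivision $\mathrm{div}^k_\bullet$ vérifiant la relation d'homotopie \eqref{132:2809}, décomposition en battages de $(\mathrm{div}_n^{|I|+1})^*(c_I)$ pour les chemins concaténés, et annulation de la somme alternée par le principe des tiroirs ($n+1$ lacets pour seulement $n$ paramètres), ce qui est exactement le contenu du lemme \ref{lem:2:50:0412} et de la proposition \ref{lem:0310}. La seule nuance de présentation est que, dans l'article, cette annulation est une identité exacte dans $C_n(X^n)$, les termes à valeurs dans $Y^{(n)}_{ab}$ ne provenant que de la relation d'homotopie appliquée à chaque $c_I$ séparément (via les lemmes \ref{lem:1:2} et \ref{lem:images:source:2809}), et non d'un télescopage de faces deux à deux.
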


\subsection{Démonstration de (a) du théorème \ref{thm:ppal}}\label{sect:2:2:2512}

\begin{lem}\label{lem:def:0803}
(a) Pour $\tilde\gamma\in\mathrm{Chem}(a,b)$, $\tilde \gamma^{(n)}$ induit un morphisme 
$$
\tilde\gamma^{(n),\mathbf{Paires}
} : (\Delta^n,\partial\Delta^n)\to(X^n,Y^{(n)}_{ab})
$$
dans $\mathbf{Paires}$. 

(b) Pour $\tilde\gamma\in\mathrm{Chem}(a,b)$, on a $[\tilde\gamma^{(n)}]
=(\tilde\gamma^{(n),\mathbf{Paires}})_*([\mathrm{id}_{\Delta^n}])\in 
\mathrm{H}_n(X^n,Y^{(n)}_{ab})$. 
\end{lem}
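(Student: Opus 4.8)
Le plan est d'établir (a) en vérifiant que $\tilde\gamma^{(n)}$ envoie le bord $\partial\Delta^n$ dans $Y^{(n)}_{ab}$, puis d'en déduire (b) par fonctorialité de l'homologie relative, en décrivant l'application induite au niveau des complexes de chaînes.

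Pour (a), la continuité de $\tilde\gamma^{(n)}$ est immédiate puisqu'il s'agit de la composée $\Delta^n\stackrel{\mathrm{can}_n}{\to}[0,1]^n\stackrel{\tilde\gamma^n}{\to}X^n$ (définition \ref{def:2:1:2311}). Il reste donc à établir l'inclusion $\tilde\gamma^{(n)}(\partial\Delta^n)\subset Y^{(n)}_{ab}$. J'écrirais le bord comme réunion de ses faces, $\partial\Delta^n=\cup_{i=0}^n\partial_i^n(\Delta^{n-1})$, où les applications de face $\partial_i^n : \Delta^{n-1}\to\Delta^n$ sont celles introduites dans la démonstration du lemme \ref{lem:1:2} (la $i$-ième face correspondant à l'égalité $t_i=t_{i+1}$, avec les conventions $t_0=0$ et $t_{n+1}=1$). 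La relation $\tilde\gamma^{(n)}\circ\partial_i^n=\partial_i^{n,X}\circ\tilde\gamma^{(n-1)}$, déjà vérifiée dans cette même démonstration, montre que l'image de la $i$-ième face est contenue dans $\partial_i^{n,X}(X^{n-1})=Y_{ab,i}^{(n)}\subset Y^{(n)}_{ab}$. En prenant la réunion sur $i\in[\![0,n]\!]$, on obtient l'inclusion souhaitée, d'où le morphisme $\tilde\gamma^{(n),\mathbf{Paires}}$ dans $\mathbf{Paires}$.

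Pour (b), j'utiliserais la description de $(\tilde\gamma^{(n),\mathbf{Paires}})_*$ au niveau des chaînes. Le morphisme de paires $\tilde\gamma^{(n),\mathbf{Paires}}$ induit un morphisme de complexes quotients $C_\bullet(\Delta^n)/C_\bullet(\partial\Delta^n)\to C_\bullet(X^n)/C_\bullet(Y^{(n)}_{ab})$ envoyant un simplexe singulier $\sigma : \Delta^m\to\Delta^n$ sur $\tilde\gamma^{(n)}\circ\sigma$. Appliqué au simplexe identité, ce morphisme envoie la classe de $\mathrm{id}_{\Delta^n}$ sur celle de $\tilde\gamma^{(n)}\circ\mathrm{id}_{\Delta^n}=\tilde\gamma^{(n)}$ ; le passage à l'homologie fournit alors l'égalité $(\tilde\gamma^{(n),\mathbf{Paires}})_*([\mathrm{id}_{\Delta^n}])=[\tilde\gamma^{(n)}]$ annoncée.

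Cette démonstration est de nature essentiellement formelle, l'essentiel du calcul ayant déjà été effectué dans le lemme \ref{lem:1:2}. Je ne m'attends pas à un obstacle sérieux : la seule vérification un peu soignée consiste à s'assurer que $\partial\Delta^n$ est exactement recouvert par les images des faces $\partial_i^n$, et le point le plus délicat reste de relier la définition fonctorielle de l'application induite en homologie relative à sa description explicite au niveau des chaînes, ce qui est standard.
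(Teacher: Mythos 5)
Votre démonstration est correcte et suit essentiellement la même voie que celle du papier : pour (a), vous explicitez le fait que les relations $\tilde\gamma^{(n)}\circ\partial_i^n=\partial_i^{n,X}\circ\tilde\gamma^{(n-1)}$ et $\partial_i^{n,X}(X^{n-1})=Y_{ab,i}^{(n)}$ établies dans la démonstration du lemme \ref{lem:1:2} donnent l'inclusion $\tilde\gamma^{(n)}(\partial\Delta^n)\subset Y^{(n)}_{ab}$, ce qui est exactement ce que le papier sous-entend en écrivant que (a) « suit de la démonstration du lemme \ref{lem:1:2} ». Pour (b), votre description de l'application induite au niveau des complexes de chaînes, envoyant $\mathrm{id}_{\Delta^n}$ sur $\tilde\gamma^{(n)}\circ\mathrm{id}_{\Delta^n}=\tilde\gamma^{(n)}$, coïncide avec l'argument du papier.
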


\begin{proof}
(a) suit de la démonstration du lemme \ref{lem:1:2}. (b) suit de ce que l'élément 
$\tilde\gamma^{(n)}\in C_n(X^n)$ est l'image par le morphisme $\Delta^n\to X^n$ dans $\mathbf{Top}$ induit par  
$\tilde\gamma^{(n)}$ de $\mathrm{id}_{\Delta^n} \in C_n(\Delta^n)$.  
\end{proof}

Le théorème \ref{thm:ppal},(a) suit alors du lemme suivant : 
\begin{lem}
L'application $\mathrm{Chem}(\underline X)\to \mathrm{H}_n(X^n,Y_{ab}^{(n)})$, 
$\tilde\gamma\mapsto[\tilde\gamma^{(n)}]$ est invariante par homotopie. 
\end{lem}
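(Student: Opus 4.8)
The plan is to reduce the statement to the homotopy invariance of relative singular homology, applied to the morphisms of pairs produced in Lemma \ref{lem:def:0803}(a). Let $\tilde\gamma_0,\tilde\gamma_1\in\mathrm{Chem}(a,b)$ be homotopes (à extrémités fixées), et fixons une telle homotopie : une application continue $H:[0,1]\times[0,1]\to X$ avec $H(s,0)=\tilde\gamma_0(s)$, $H(s,1)=\tilde\gamma_1(s)$, et $H(0,t)=a$, $H(1,t)=b$ pour tout $t$. Posant $\tilde\gamma_t:=H(-,t)\in\mathrm{Chem}(a,b)$, I would assemble the fiberwise maps $\tilde\gamma_t^{(n)}$ into a single continuous map
$$
\mathcal H:\Delta^n\times[0,1]\to X^n,\qquad \mathcal H((t_1,\ldots,t_n),t)=(H(t_1,t),\ldots,H(t_n,t)),
$$
où $(t_1,\ldots,t_n)$ est vu dans $[0,1]^n$ via $\mathrm{can}_n$. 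By construction $\mathcal H(-,0)=\tilde\gamma_0^{(n)}$ and $\mathcal H(-,1)=\tilde\gamma_1^{(n)}$, and continuity is immediate from that of $H$.

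The essential point -- and the one step that genuinely uses that $H$ is a homotopy \emph{à extrémités fixées} -- is to check that $\mathcal H$ restricts to a homotopy of pairs, i.e. that $\mathcal H(\partial\Delta^n\times[0,1])\subset Y_{ab}^{(n)}$. Following the computation in the proof of Lemma \ref{lem:1:2}, $\partial\Delta^n$ is the union of the faces $\{t_i=t_{i+1}\}$ pour $i\in[\![0,n]\!]$ (avec les conventions $t_0=0$, $t_{n+1}=1$). On the face $\{t_i=t_{i+1}\}$ avec $1\le i\le n-1$ one has $H(t_i,t)=H(t_{i+1},t)$, so the image lands in $Y_{ab,i}^{(n)}$; on the face $\{t_1=0\}$ ($i=0$) the first coordinate equals $H(0,t)=a$ and on the face $\{t_n=1\}$ ($i=n$) the last coordinate equals $H(1,t)=b$, so these images land respectively in $Y_{ab,0}^{(n)}$ et $Y_{ab,n}^{(n)}$. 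In every case $\mathcal H$ carries the face into $Y_{ab}^{(n)}$, comme voulu.

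It then follows that $\mathcal H$ is a homotopy dans $\mathbf{Paires}$ entre les morphismes $\tilde\gamma_0^{(n),\mathbf{Paires}}$ et $\tilde\gamma_1^{(n),\mathbf{Paires}}$ de Lemma \ref{lem:def:0803}(a). By homotopy invariance of the relative homology functor $\mathrm H_\bullet:\mathbf{Paires}\to\mathbf{AbGr}$ (\cite{Ha}), the induced maps coincide : $(\tilde\gamma_0^{(n),\mathbf{Paires}})_*=(\tilde\gamma_1^{(n),\mathbf{Paires}})_*$. Evaluating both on the class $[\mathrm{id}_{\Delta^n}]\in\mathrm H_n(\Delta^n,\partial\Delta^n)$ and invoking Lemma \ref{lem:def:0803}(b), I obtain $[\tilde\gamma_0^{(n)}]=[\tilde\gamma_1^{(n)}]$, soit l'invariance annoncée. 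The main obstacle is precisely the boundary check above : once the extremity condition on $H$ is used to push the two extreme faces $\{t_1=0\}$ and $\{t_n=1\}$ onto the partial diagonals $Y_{ab,0}^{(n)}$ and $Y_{ab,n}^{(n)}$, everything else is the formal functoriality and homotopy invariance of relative homology.
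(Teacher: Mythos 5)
Your proof is correct and takes essentially the same route as the paper: a fixed-endpoint homotopy of paths induces a homotopy of pairs $(\Delta^n,\partial\Delta^n)\to(X^n,Y^{(n)}_{ab})$, and homotopy invariance of relative homology, evaluated on $[\mathrm{id}_{\Delta^n}]$ together with the lemma \ref{lem:def:0803}(b), gives the conclusion. The only difference is that you spell out the construction of the pair homotopy $\mathcal H$ and the boundary verification $\mathcal H(\partial\Delta^n\times[0,1])\subset Y^{(n)}_{ab}$, a step the paper asserts without detail.
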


\begin{proof}
Soit $\tilde\gamma,\tilde\gamma'\in\mathrm{Chem}(\underline X)$. Une homotopie entre $\tilde\gamma$ et 
$\tilde\gamma'$ produit une homotopie entre les morphismes de paires $(\Delta^n,\partial\Delta^n)\to(X^n,Y^{(n)}_{ab})$ donnés par 
$\tilde\gamma^{(n),\mathbf{Paires}}$ et $(\tilde\gamma')^{(n),\mathbf{Paires}}$. Par l'invariance homotopique de l'homologie relative 
(Proposition 13.14 dans \cite{Gr}), les morphismes associés $\mathrm{H}_n(\Delta^n,\partial\Delta^n)\to \mathrm{H}_n(X^n,Y^{(n)}_{ab})$ 
induits en homologie, à savoir $\mathrm{H}_n(\tilde\gamma^{(n),\mathbf{Paires}})$ et $\mathrm{H}_n((\tilde\gamma')^{(n),\mathbf{Paires}})$ 
sont égaux. Les images qu'ils donnent à $[\mathrm{id}_{\Delta^n}]$ sont donc égales, et le lemme \ref{lem:def:0803}(b) implique alors 
$[\tilde\gamma^{(n)}]=[\tilde\gamma^{\prime(n)}]$.
\end{proof}

\subsection{Constructions combinatoires}\label{sect:2:3:2512}

Le but de cette sous-section est la construction, pour tout couple $(n,k)$ avec $n,k\geq 1$ : (a)     
d'un ensemble $\mathbf{Aff}(\mathbb R^{n-1},\mathbb R^n)$, d'une application 
$(\underline f,\mathrm{sgn}) : \mathbb Z^n\times\mathfrak S_n \times[\![0,n]\!]\to 
\mathbf{Aff}(\mathbb R^{n-1},\mathbb R^{n-1})\times\{\pm1\}$ et d'une involution 
$\mathrm{invol}$ de $\mathbb Z^n\times\mathfrak S_n \times[\![0,n]\!]$, telle que le diagramme \eqref{diag:a:0412}  
commute  ; (b) d'applications $(\underline{\tilde f},\tilde{\mathrm{sgn}}) : \mathbb Z^{n-1}\times\mathfrak S_{n-1} \times[\![0,n]\!]
\to \mathbf{Aff}(\mathbb R^{n-1},\mathbb R^{n-1})\times\{\pm1\}$ et $\underline{\mathrm{bij}} : 
\mathbb Z^{n-1}\times\mathfrak S_{n-1} \times[\![0,n]\!]\to\mathbb Z^n\times\mathfrak S_n \times[\![0,n]\!]$
telles que le diagramme \eqref{diag:b:0412} commute 
(c) d'un sous-ensemble $\mathrm{Ens}_n^k\subset \mathbb Z^n\times\mathfrak S_n$ et d'une bijection 
$\mathrm{bij} : \mathrm{Ens}_{n-1}^k \times[\![0,n]\!]\to\{x\in \mathrm{Ens}_n^k\times[\![0,n]\!]|
\mathrm{invol}(x)\notin\mathrm{Ens}_n^k\times[\![0,n]\!]\}$, telle que le diagramme \eqref{diag:c:0412} commute 
(d) d'un sous-ensemble $\mathrm{Aff}(\Delta^{n-1},\Delta^n)$ de $\mathbf{Aff}(\mathbb R^{n-1},\mathbb R^n)$ et la 
construction d'applications $f : \mathrm{Ens}_n^k \times[\![0,n]\!]\to \mathrm{Aff}(\Delta^{n-1},\Delta^n)$ et 
$\tilde f : \mathrm{Ens}_{n-1}^k \times[\![0,n]\!] \to \mathrm{Aff}(\Delta^{n-1},\Delta^n)$
telle que le diagramme  \eqref{diag:d:0412} commute. 

\subsubsection{Diagramme commutatif impliquant une involution de $\mathbb Z^n\times\mathfrak S_n\times[\![0,n]\!]$}

On note $\mathbf{Aff}$ la catégorie des espaces affines, dont les morphismes sont les applications affines. 

Pour $n\neq 0$, on note $(e_1^n,\ldots,e_n^n)$ la base canonique de $\mathbb R^n$ et pour $i\in[\![0,n]\!]$, on pose 
$E_i^n:=e^n_n+e^n_{n-1}+\cdots+e^n_{n-i+1}\in \mathbb R^n$ (on a en particulier $E_0^n=0$). 

\begin{defn}\label{def:1709:2311:BIS}
Soit $n,m\geq 0$. Pour $P_0,\ldots,P_n\in\mathbb R^m$, on note $[\underline{P_0,\ldots,P_n}]\in\mathbf{Aff}(\mathbb R^n,\mathbb R^m)$ 
l'unique application affine $\mathbb R^n\to\mathbb R^m$ telle que $E_i^n\mapsto P_i$ pour $i=0,\ldots,n$. 
\end{defn}

\begin{defn}
Pour $i\in[\![0,n]\!]$, on pose $\underline\partial_i^n:=[\underline{E_0^n,\ldots,E_{n-i-1}^n,E_{n-i+1}^n,\ldots,E_n^n}]
\in\mathbf{Aff}(\mathbb R^{n-1},\mathbb R^n)$. 
\end{defn}

Soit $\mathfrak S_n$ le groupe des permutations de $[\![1,n]\!]$. Pour $\sigma\in\mathfrak S_n$, on note 
$\sigma^*$ la permutation de $\mathbb R^n$ donnée par $\sigma^*(t_1,\ldots,t_n):=(t_{\sigma(1)},\ldots,t_{\sigma(n)})$. 
On a alors $\sigma^*(e_i^n)=e_{\sigma^{-1}(i)}^n$, et $(\sigma\tau)^*=\tau^*\circ\sigma^*$ pour $\sigma,\tau\in\mathfrak S_n$.  

\begin{defn}
Pour $(v,\sigma) \in \mathbb Z^n\times\mathfrak S_n$, on définit $\underline c_k(v,\sigma) \in \mathbf{Aff}(\mathbb R^n,\mathbb R^n)$
comme l'application de $\mathbb R^n$ dans lui-même donnée par $x\mapsto (1/k)(v+\sigma^*(x))$.  
\end{defn}

On a alors $\underline c_k(v,\sigma)=[\underline{(1/k)(v+\sigma^*E^n_0),\ldots,(1/k)(v+\sigma^*E^n_n)}]$ pour 
$(v,\sigma) \in \mathbb Z^n\times\mathfrak S_n$. 

\begin{defn}\label{def:2:9:0512}
On note 
$$
\underline f : \mathbb Z^n\times\mathfrak S_n\times[\![0,n]\!]\to \mathbf{Aff}(\mathbb R^{n-1},\mathbb R^n)\quad\operatorname{et}\quad 
\mathrm{sgn}: \mathbb Z^n\times\mathfrak S_n\times[\![0,n]\!]\to\{\pm1\}
$$
les applications données par 
$$
\underline f(v,\sigma,i):= \underline c_k(v,\sigma)\circ\underline\partial_i^n,\quad 
\mathrm{sgn}(v,\sigma,i):=(-1)^i\epsilon(\sigma). 
$$
\end{defn}

\begin{defn}
On note $\mathrm{invol}$ l'application de $\mathbb Z^n\times\mathfrak S_n\times[\![0,n]\!]$ dans lui-même donnée par
$$
\mathrm{invol}(v,\sigma,i):=(v,s_{i,i+1}\circ\sigma,i)\quad\operatorname{pour}\quad 
(v,\sigma,i)\in\mathbb Z^n\times\mathfrak S_n\times[\![1,n-1]\!], 
$$
où $s_{i,i+1}\in\mathfrak S_n$ est la permutation de $i$ et $i+1$, 
$$
\mathrm{invol}(v,\sigma,n):=(v+\sigma^*(e_n^n),c\circ\sigma,0)\quad\operatorname{pour}\quad 
(v,\sigma)\in\mathbb Z^n\times\mathfrak S_n, 
$$
où $c\in\mathfrak S_n$ est le $n$-cycle donné par $c(i):=i+1$ pour $i\neq n$, $c(n)=1$, et 
$$
\mathrm{invol}(v,\sigma,0):=(v-\sigma^*(e_1^n),c^{-1}\circ\sigma,n) \quad\operatorname{pour}\quad 
(v,\sigma)\in\mathbb Z^n\times\mathfrak S_n.  
$$
\end{defn}

\begin{lem}\label{lem:inv:BIS}
(a) $\mathrm{invol}$ est une involution de $\mathbb Z^n\times\mathfrak S_n\times[\![0,n]\!]$. 

(b) Le diagramme suivant commute 
\begin{equation}\label{diag:a:0412}
\xymatrix{
\mathbb Z^n\times\mathfrak S_n \times[\![0,n]\!]
\ar^{\mathrm{invol}}[rr]\ar_{(\underline f,\mathrm{sgn})}[rd]&&
\mathbb Z^n\times\mathfrak S_n \times[\![0,n]\!]
\ar^{(\underline f,-\mathrm{sgn})}[dl]\\
&\mathbf{Aff}(\mathbb R^{n-1},\mathbb R^{n-1})\times\{\pm1\}&}
\end{equation}
\end{lem}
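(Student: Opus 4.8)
The plan is to prove both parts by a case analysis on the third argument $i\in[\![0,n]\!]$, separating the three cases $i\in[\![1,n-1]\!]$, $i=n$ and $i=0$. For part (a) the case $i\in[\![1,n-1]\!]$ is immediate: $\mathrm{invol}$ fixes $v$ and $i$ and replaces $\sigma$ by $s_{i,i+1}\circ\sigma$, so $\mathrm{invol}^2$ returns $(v,s_{i,i+1}^2\sigma,i)=(v,\sigma,i)$ since $s_{i,i+1}$ is an involution of $\mathfrak S_n$. The point is that $\mathrm{invol}$ exchanges the cases $i=n$ and $i=0$, so it remains to apply $\mathrm{invol}$ twice starting from $(v,\sigma,n)$ and from $(v,\sigma,0)$ and recover the original triple. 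The permutation part returns to $\sigma$ because $c$ and $c^{-1}$ are mutually inverse, and the only genuine verification is that the translation parts cancel. Using $(\sigma\tau)^*=\tau^*\circ\sigma^*$, this reduces to the two identities $c^*(e_1^n)=e_n^n$ and $(c^{-1})^*(e_n^n)=e_1^n$, which follow from $c(n)=1$, $c^{-1}(1)=n$ and $\sigma^*(e_j^n)=e_{\sigma^{-1}(j)}^n$.

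For part (b), the commutativity of \eqref{diag:a:0412} amounts to the two equalities $\mathrm{sgn}(\mathrm{invol}(v,\sigma,i))=-\mathrm{sgn}(v,\sigma,i)$ and $\underline f(\mathrm{invol}(v,\sigma,i))=\underline f(v,\sigma,i)$. The sign equality I would dispatch directly: for $i\in[\![1,n-1]\!]$ the index $i$ is unchanged while the factor $\epsilon(s_{i,i+1})=-1$ produces the sign reversal; for $i=n$ and $i=0$ the index jumps between $0$ and $n$, contributing a factor $(-1)^n$, while $\sigma$ is multiplied by $c^{\pm1}$, contributing $\epsilon(c)=\epsilon(c^{-1})=(-1)^{n-1}$, and the product $(-1)^n(-1)^{n-1}=-1$ gives exactly the required sign change.

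The substantive part is the equality of the affine maps $\underline f(\mathrm{invol}(v,\sigma,i))=\underline f(v,\sigma,i)$. Writing $\underline f(v,\sigma,i)=\underline c_k(v,\sigma)\circ\underline\partial_i^n$ with $\underline c_k(v,\sigma)(x)=(1/k)(v+\sigma^*(x))$ and invoking $(\sigma\tau)^*=\tau^*\circ\sigma^*$, the common affine factor $x\mapsto(1/k)(v+\sigma^*(x))$ can be stripped off (here one uses the injectivity of $\sigma^*$), reducing each case to a purely combinatorial identity. For $i\in[\![1,n-1]\!]$ the needed identity is $s_{i,i+1}^*\circ\underline\partial_i^n=\underline\partial_i^n$; for $i=n$ it is $\underline\partial_n^n=e_n^n+c^*\circ\underline\partial_0^n$; and for $i=0$ it is $\underline\partial_0^n=-e_1^n+(c^{-1})^*\circ\underline\partial_n^n$.

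To establish these I would compute the $\underline\partial_i^n$ on the vertices, using $E_j^n=\sum_{l=n-j+1}^n e_l^n$, i.e.\ that $E_j^n$ is the vector whose last $j$ coordinates equal $1$ and whose others vanish. For $i\in[\![1,n-1]\!]$ the image of $\underline\partial_i^n$ is the affine span of the vertices $E_j^n$ with $j\neq n-i$, and each such $E_j^n$ has equal $i$-th and $(i+1)$-th coordinates (both $0$ when $j\le n-i-1$ and both $1$ when $j\ge n-i+1$; only the omitted vertex $E_{n-i}^n$ separates these coordinates). Since $s_{i,i+1}^*$ is linear and fixes every such vertex, it fixes the whole image pointwise, which is the first identity. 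For the last two cases I would identify $\underline\partial_0^n$ with $(t_1,\dots,t_{n-1})\mapsto(0,t_1,\dots,t_{n-1})$ and $\underline\partial_n^n$ with $(t_1,\dots,t_{n-1})\mapsto(t_1,\dots,t_{n-1},1)$, after which the cyclic-shift identities reduce to the transparent checks $c^*(0,t_1,\dots,t_{n-1})=(t_1,\dots,t_{n-1},0)$ and $(c^{-1})^*(t_1,\dots,t_{n-1},1)=(1,t_1,\dots,t_{n-1})$, corrected by the translations $\pm e_n^n$, $\pm e_1^n$. The main obstacle I anticipate is purely bookkeeping: pinning down the conventions for $\underline\partial_i^n$ (which vertex $E_{n-i}^n$ is deleted) and for $\sigma^*$ (which acts through $\sigma^{-1}$ on the basis), since a single off-by-one error in the position of the deleted coordinate would break the coordinate-equality argument underlying all three cases.
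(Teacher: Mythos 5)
Your proof is correct and follows essentially the same route as the paper: the same case analysis $i\in[\![1,n-1]\!]$, $i=n$, $i=0$ for both (a) and (b), the same sign computation via $\epsilon(s_{i,i+1})=-1$ and $\epsilon(c^{\pm 1})=(-1)^{n-1}$, and the same three key facts ($s_{i,i+1}^*$ fixes the $E_j^n$ with $j\neq n-i$, and the cyclic-shift relations between $\underline\partial_0^n$ and $\underline\partial_n^n$ corrected by the translations $e_n^n$, $e_1^n$). The only difference is organizational: you cancel the invertible common factor $\underline c_k(v,\sigma)$ so as to verify $(v,\sigma)$-independent identities among $\underline\partial_i^n$, $s_{i,i+1}^*$, $c^*$ and $(c^{-1})^*$, whereas the paper carries $(v,\sigma)$ through a vertex-by-vertex computation in the bracket notation $[\underline{P_0,\ldots,P_n}]$; the mathematical content is identical.
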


\begin{proof}
(a) Soit $(v,\sigma,i)\in\mathbb Z^n\times\mathfrak S_n\times[\![0,n]\!]$. Si $i\neq 0,n$, alors $s_{i,i+1}^2=id$ implique 
$\mathrm{invol}\circ \mathrm{invol}(v,\sigma,i)=(v,\sigma,i)$. Si $i=n$, alors $\mathrm{invol}\circ\mathrm{invol}(v,\sigma,n)
=\mathrm{invol}(v+\sigma^*(e_n^n),c \circ \sigma,0)=(v+\sigma^*(e_n^n)-(c \circ \sigma)^*(e_1^n),c^{-1} \circ c \circ \sigma,n)
=(v,\sigma,n)$ car $(c\circ\sigma)^*(e_1^n)=\sigma^*\circ c^*(e_1^n)=\sigma^*(e_n^n)$
et si $i=0$, on a $\mathrm{invol}\circ\mathrm{invol}(v,\sigma,0)
=\mathrm{invol}(v-\sigma^*(e_1^n),c^{-1} \circ \sigma,n)
=(v-\sigma^*(e_1^n)+(c^{-1} \circ \sigma)^*(e_n^n),c \circ c^{-1} \circ \sigma,0)
=(v,\sigma,0)$ car $(c^{-1}\circ\sigma)^*(e_n^n)=\sigma^*\circ(c^{-1})^*(e_n^n)=\sigma^*(e_1^n)$. 
On a donc dans tous les cas $\mathrm{invol}\circ\mathrm{invol}(v,\sigma,i)=(v,\sigma,i)$.

(b) Soit $(v,\sigma,i)\in\mathbb Z^n\times\mathfrak S_n\times[\![0,n]\!]$. 

Si $i\neq 0,n$, alors $\mathrm{sgn}\circ\mathrm{invol}(v,\sigma,i)=-\mathrm{sgn}(v,\sigma,i)$ du fait de $\epsilon(s_{i,i+1})=-1$. Si $i=n$, 
alors $\mathrm{sgn}\circ\mathrm{invol}(v,\sigma,n)=\mathrm{sgn}(v+\sigma^*(e_1^n),c^{-1}\circ\sigma,0)=(-1)^n\epsilon(c^{-1}\circ 
\sigma)=-(-1)^0\epsilon(\sigma)=-\mathrm{sgn}(v,\sigma,n)$ du fait de $\epsilon(c)=(-1)^{n-1}$ et $\mathrm{sgn}\circ 
\mathrm{invol}(v,\sigma,0)=\mathrm{sgn}(v-\sigma^*(e_n^n),c\circ\sigma,n)=(-1)^0\epsilon(c\circ\sigma)=-(-1)^n\epsilon(\sigma)
=-\mathrm{sgn}(v,\sigma,0)$ pour la même raison. On a donc dans tous les cas $\mathrm{sgn}\circ\mathrm{invol}(v,\sigma,i)
=-\mathrm{sgn}(v,\sigma,i)$.  

Si $i\neq 0,n$, alors 
\begin{align*}
    & \underline f \circ \mathrm{invol}(v,\sigma,i)=\underline f(v,s_{i,i+1} \circ \sigma,i)
    =[\underline{(v+(s_{i,i+1} \circ \sigma)^*(E_0^n)/k,\ldots,(v+(s_{i,i+1} \circ \sigma)^*(E_n^n)/k}] \\ & 
    \circ [\underline{E_0^n,\ldots,E_{n-i-1}^n,E_{n-i+1}^n,\ldots,E_n^n}]
    =[\underline{(v+(s_{i,i+1} \circ \sigma)^*(E_0^n)/k,\ldots,(v+(s_{i,i+1} \circ \sigma)^*(E_{n-i-1}^n)/k,}\\ & 
    \underline{(v+(s_{i,i+1} \circ \sigma)^*(E_{n-i+1}^n)/k,\ldots,(v+(s_{i,i+1} \circ \sigma)^*(E_n^n)/k}]
    \\ & =[\underline{(v+\sigma^*(E_0^n)/k,\ldots,(v+\sigma^*(E_{n-i-1}^n)/k,(v+\sigma^*(E_{n-i+1}^n)/k,\ldots,(v+\sigma^*(E_n^n)/k}] 
    =\underline f(v,\sigma,i)
\end{align*}
 en utilisant $(\sigma\cdot\tau)^*=\tau^* \circ \sigma^*$ et $s_{i,i+1}^*(E_j^n)=E_j^n$ pour $j\in[\![0,n]\!]$ et $j\neq n-i$. 

Si $i=n$, alors 
\begin{align*} 
    & \underline f \circ \mathrm{invol}(v,\sigma,n)=\underline f(v+\sigma^*(e_n^n),c \circ \sigma,0)
    \\ & =[\underline{(v+\sigma^*(e_n^n)+(c \circ \sigma)^*(E_0^n))/k,\ldots,(v+\sigma^*(e_n^n)+(c \circ \sigma)^*(E_n^n))/k}] 
    \circ [\underline{E_0^n,\ldots,E_{n-1}^n}]
    \\ & =[\underline{(v+\sigma^*(e_n^n)+(c \circ \sigma)^*(E_0^n))/k,\ldots,(v+\sigma^*(e_n^n)+(c \circ \sigma)^*(E_{n-1}^n))/k}]
    \\ & =[\underline{(v+\sigma^*(e_n^n+c^*(E_0^n)))/k,\ldots,(v+\sigma^*(e_n^n+c^*(E_{n-1}^n)))/k}]
    \\ & =[\underline{(v+\sigma^*(E_1^n))/k,\ldots,(v+\sigma^*(E_{n}^n))/k}]
    \\ & =[\underline{(v+\sigma^*(E_0^n))/k,\ldots,(v+\sigma^*(E_n^n))/k}] \circ [\underline{E_1^n,\ldots,E_{n}^n}]=\underline f(v,\sigma,n)
\end{align*}
du fait de $c^*(E_i^n)+e_n^n=E_{i+1}^n$ pour $i\in[\![0,n-1]\!]$. 

Si $i=0$, alors 
\begin{align*}
    & \underline f \circ \mathrm{invol}(v,\sigma,0)=\underline f(v-\sigma^*(e_1^n),c^{-1} \circ \sigma,n)
    \\ & =[\underline{(v-\sigma^*(e_1^n)+(c^{-1} \circ \sigma)^*(E_0^n))/k,\ldots,(v-\sigma^*(e_1^n)+(c^{-1} \circ \sigma)^*(E_n^n))/k}] 
    \circ [\underline{E_1^n,\ldots,E_{n}^n}]
    \\ & =[\underline{(v-\sigma^*(e_1^n)+(c^{-1} \circ \sigma)^*(E_1^n))/k,\ldots,(v-\sigma^*(e_1^n)+(c^{-1} \circ \sigma)^*(E_{n}^n))/k}]
    \\ & =[\underline{(v+\sigma^*(-e_1^n+(c^{-1})^*(E_1^n)))/k,\ldots,(v+\sigma^*(-e_1^n+(c^{-1})^*(E_{n}^n)))/k}]
    \\ & =[\underline{(v+\sigma^*(E_0^n))/k,\ldots,(v+\sigma^*(E_{n-1}^n))/k}]
    \\ & =[\underline{(v+\sigma^*(E_0^n))/k,\ldots,(v+\sigma^*(E_n^n))/k}] \circ [\underline{E_0^n,\ldots,E_{n-1}^n}]=\underline f(v,\sigma,0)
\end{align*}
du fait de $(c^{-1})^*(E_i^n)-e_1^n=E_{i-1}^n$ pour $i \in [\![1,n]\!]$. 
On a donc dans tous les cas $f\circ\mathrm{invol}(v,\sigma,i)=f(v,\sigma,i)$. 
\end{proof}

\begin{rem}
On vérifie directement que $\mathrm{invol}$ est sans point fixe; cela résulte aussi de $\mathrm{sgn}\circ \mathrm{invol}=-\mathrm{sgn}$
et du fait que $\{\pm1\}$ n'a pas de point fixe sous le changement de signe. 
\end{rem}

\subsubsection{Constructions et résultats relatifs aux permutations}

\begin{lem}\label{lem:du:08:11:BIS}
Soit $\tau\in\mathfrak S_{n-1}$. Pour $i\in[\![1,n-1]\!]$, soit $\mathbf{inv}(\tau,i):=\{j\in [\![1,n-1]\!]|(j-i)(\tau(j)-\tau(i))<0\}$. 
Alors $|\mathbf{inv}(\tau,i)|\equiv \tau(i)-i$ (mod 2). 
\end{lem}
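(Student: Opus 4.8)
The plan is to compute $|\mathbf{inv}(\tau,i)|$ by splitting the index set $[\![1,n-1]\!]\setminus\{i\}$ according to the sign of $j-i$, and to deduce the congruence from a \emph{signed} count of inversions. By definition $\mathbf{inv}(\tau,i)$ consists precisely of those $j$ with either $j<i$ and $\tau(j)>\tau(i)$, or $j>i$ and $\tau(j)<\tau(i)$. Setting $A:=\{j<i\mid \tau(j)>\tau(i)\}$ and $B:=\{j>i\mid \tau(j)<\tau(i)\}$, one has $|\mathbf{inv}(\tau,i)|=|A|+|B|$. Since $2|A|$ is even, it suffices to establish the sharper equality $|B|-|A|=\tau(i)-i$, whence $|A|+|B|\equiv|B|-|A|=\tau(i)-i\pmod 2$.

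First I would introduce the complementary set $A':=\{j<i\mid\tau(j)<\tau(i)\}$, so that $A$ and $A'$ partition the $i-1$ indices strictly below $i$; this gives the \emph{position} count $|A|+|A'|=i-1$. Next I would count by value instead: among the $\tau(i)-1$ indices $j$ with $\tau(j)<\tau(i)$, those with $j<i$ are exactly the elements of $A'$ and those with $j>i$ are exactly the elements of $B$ (none has $j=i$), which yields the \emph{value} count $|A'|+|B|=\tau(i)-1$. Here I use that $\tau$ is a bijection of $[\![1,n-1]\!]$, so that each value below $\tau(i)$ is attained exactly once, at an index different from $i$.

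Subtracting the two identities eliminates $|A'|$: substituting $|A'|=i-1-|A|$ into $|A'|+|B|=\tau(i)-1$ gives $|B|-|A|=\tau(i)-i$, which is the sharper equality sought, and the claimed congruence follows. The argument is entirely elementary; the only point requiring care is the consistent bookkeeping of the sets $A,A',B$ and the correct use of the bijectivity of $\tau$ when passing from the position count to the value count. I do not expect any genuine obstacle beyond this double counting.
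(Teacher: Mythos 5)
Your proof is correct, and it takes a genuinely different route from the paper's. You establish the sharper exact identity $|B|-|A|=\tau(i)-i$ by a direct double count: the position count $|A|+|A'|=i-1$ and the value count $|A'|+|B|=\tau(i)-1$ (both resting on the bijectivity of $\tau$, which guarantees that $A\sqcup A'$ exhausts $\{j<i\}$ and that the values below $\tau(i)$ are attained exactly once, never at $j=i$), after which eliminating $|A'|$ gives the identity and the congruence follows from $|A|+|B|\equiv|B|-|A|\pmod 2$. The paper instead argues by induction on $i$: its base case $\mathbf{inv}(\tau,1)=\tau^{-1}([\![1,\tau(1)-1]\!])$ is precisely your value count in the degenerate situation $A=\emptyset$, and its inductive step compares $\mathbf{inv}(\tau,i)$ with $\mathbf{inv}(\tau,i+1)$ through symmetric differences, split into the two cases $\tau(i)<\tau(i+1)$ and $\tau(i+1)<\tau(i)$, which is considerably longer. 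Your argument buys brevity and strictly more information (a signed count rather than a parity); and since the only later use of this lemma (in the proof of the lemme \ref{lem:124:1611:BIS}) invokes the congruence alone and not the inductive machinery, your proof could be substituted without affecting anything downstream. Conversely, the paper's route offers no real advantage here: its symmetric-difference bookkeeping is self-contained and not reused elsewhere.
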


\begin{proof}
Montrons l'énoncé par récurrence sur $i$. Si $i=1$, $\mathbf{inv}(\tau,1)=\{j\in [\![1,n-1]\!]|\tau(j)<\tau(1)\}
=\tau^{-1}([\![1,\tau(1)-1]\!])$, donc $|\mathbf{inv}(\tau,1)|=|\tau^{-1}([\![1,\tau(1)-1]\!])|
=|[\![1,\tau(1)-1]\!]|=\tau(1)-1$ où la deuxième égalité suit de la bijectivité de $\tau$, ce qui implique l'énoncé pour 
$i=1$. 

Soit $1\leq i<n-1$ : supposons l'énoncé vrai pour $i$ et montrons-le pour $i+1$. Pour cela, on note 
$\mathbf A_0:=\{j|j<i$ et $\tau(j)>\tau(i)\}$, 
$\mathbf A_1:=\{j|j<i$ et $\tau(j)>\tau(i+1)\}$, $\mathbf B_0:=\{j|j>i+1$ et $\tau(j)<\tau(i)\}$, 
$\mathbf B_1:=\{j|j>i+1$ et $\tau(j)<\tau(i+1)\}$ ; on observe que 
$\mathbf A_\alpha\cap \mathbf B_\beta=\emptyset$ pour tous $\alpha,\beta\in\{0,1\}$. 

Deux cas se présentent : 

$\bullet$ on a $\tau(i)<\tau(i+1)$. On a alors $i+1\notin \mathbf{inv}(\tau,i)$ et $i\notin \mathbf{inv}(\tau,i+1)$, ce qui implique 
$\mathbf{inv}(\tau,i)=\mathbf A_0\cup \mathbf B_0$ et $\mathbf{inv}(\tau,i+1)=
\mathbf A_1\cup \mathbf B_1$. En notant $\triangle$ l'opération de différence symétrique, on obtient 
\begin{equation}\label{toto:0811:BIS}
 \mathbf{inv}(\tau,i)\triangle\mathbf{inv}(\tau,i+1)=(\mathbf A_0\triangle\mathbf A_1)\cup(\mathbf B_0\triangle\mathbf B_1),    
\end{equation}
compte tenu de $(X_0\cup Y_0)\triangle(X_1\cup Y_1)=(X_0\triangle X_1)\cup (Y_0\triangle Y_1)$
pour tous ensembles $X_0,X_1,Y_0,Y_1$ tels que $X_\alpha\cap Y_\beta=\emptyset$. 
Du fait que $\tau(i)<\tau(i+1)$, on a  $\mathbf A_1\subset \mathbf A_0$ et $\mathbf B_1\supset \mathbf B_0$, ce qui implique 
\begin{equation}\label{tutu:0811:BIS}
    (\mathbf A_0\triangle\mathbf A_1)\cup(\mathbf B_0\triangle\mathbf B_1)
=(\mathbf A_0-\mathbf A_1)\cup(\mathbf B_1-\mathbf B_0).
\end{equation}
Alors 
$$
\mathbf A_0-\mathbf A_1=\tau^{-1}([\![\tau(i)+1,\tau(i+1)]\!])\cap\{j|j<i\},
\quad 
\mathbf B_1-\mathbf B_0=\tau^{-1}([\![\tau(i),\tau(i+1)-1]\!])\cap\{j|j>i+1\}.  
$$
Comme $\tau(i+1)\notin\tau(\{j|j<i\})$ et $\tau(i)\notin\tau(\{j|j>i+1\})$ on en déduit 
$$
\mathbf A_0-\mathbf A_1=\tau^{-1}([\![\tau(i)+1,\tau(i+1)-1]\!])\cap\{j|j<i\},\quad 
\mathbf B_1-\mathbf B_0=\tau^{-1}([\![\tau(i)+1,\tau(i+1)-1]\!])\cap\{j|j>i+1\}, 
$$
donc 
$(\mathbf A_0-\mathbf A_1)\cup(\mathbf B_1-\mathbf B_0)=\tau^{-1}([\![\tau(i)+1,\tau(i+1)-1]\!])\cap\{j|j\neq i,i+1\}$. 
Compte tenu de $\{i,i+1\}\cap \tau^{-1}([\![\tau(i)+1,\tau(i+1)-1]\!])=\emptyset$, on en déduit 
\begin{equation}\label{titi:0811:BIS}
    (\mathbf A_0-\mathbf A_1)\cup(\mathbf B_1-\mathbf B_0)=\tau^{-1}([\![\tau(i)+1,\tau(i+1)-1]\!]). 
\end{equation}
Alors 
\begin{align}\label{NUMERO:2811:BIS}
& \nonumber |\mathbf{inv}(\tau,i+1)|-|\mathbf{inv}(\tau,i)|\equiv|\mathbf{inv}(\tau,i)\triangle\mathbf{inv}(\tau,i+1)|
=|(\mathbf A_0-\mathbf A_1)\cup(\mathbf B_1-\mathbf B_0)|
\\ & =|\tau^{-1}([\![\tau(i)+1,\tau(i+1)-1]\!])|=|[\![\tau(i)+1,\tau(i+1)-1]\!]|=\tau(i+1)-\tau(i)-1
\end{align}
mod 2, où la première égalité suit de 
\begin{equation}\label{tyty:0811:BIS}
    |A\triangle B|\equiv |B|-|A|\text{ mod }2\text{ pour }A,B\text{ ensembles finis,}
\end{equation}
la deuxième égalité suit de la combinaison de \eqref{toto:0811:BIS} et \eqref{tutu:0811:BIS}, la troisième égalité suit de \eqref{titi:0811:BIS}, 
la quatrième égalité suit de la bijectivité de $\tau$ ; 

$\bullet$ on a $\tau(i+1)<\tau(i)$. 
On a alors $\mathbf A_1\supset \mathbf A_0$ et $\mathbf B_1\subset \mathbf B_0$, ce qui implique 
\begin{equation}\label{tutu:0811:prime:BIS}
    (\mathbf A_0\triangle\mathbf A_1)\cup(\mathbf B_0\triangle\mathbf B_1)
=(\mathbf A_1-\mathbf A_0)\cup(\mathbf B_0-\mathbf B_1). 
\end{equation}
Alors
$$
\mathbf A_1-\mathbf A_0=\tau^{-1}([\![\tau(i+1)+1,\tau(i)]\!])\cap\{j|j<i\},\quad 
\mathbf B_0-\mathbf B_1=\tau^{-1}([\![\tau(i+1),\tau(i)-1]\!])\cap\{j|j>i+1\}.
$$
Or $\tau(i)\notin\tau(\{j|j<i\})$, $\tau(i+1)\notin\tau(\{j|j>i+1\})$, donc 
$$
\mathbf A_1-\mathbf A_0=\tau^{-1}([\![\tau(i+1)+1,\tau(i)-1]\!])\cap\{j|j<i\},\quad 
\mathbf B_0-\mathbf B_1=\tau^{-1}([\![\tau(i+1)+1,\tau(i)-1]\!])\cap\{j|j>i+1\}.
$$
Donc $(\mathbf A_1-\mathbf A_0)\cup(\mathbf B_0-\mathbf B_1)=\tau^{-1}([\![\tau(i+1)+1,\tau(i)-1]\!])\cap\{j|j\neq i,i+1\}$. 
Compte tenu de $\{i,i+1\}\cap \tau^{-1}([\![\tau(i+1)+1,\tau(i)-1]\!])=\emptyset$, on en déduit 
\begin{equation}\label{titi:0811:prime:BIS}
    (\mathbf A_1-\mathbf A_0)\cup(\mathbf B_0-\mathbf B_1)=\tau^{-1}([\![\tau(i+1)+1,\tau(i)-1]\!]). 
\end{equation}
De plus, comme $\tau(i+1)<\tau(i)$, on a 
$\mathbf{inv}(\tau,i)=\mathbf A_0\cup \mathbf B_0\cup \{i+1\}$ et 
$\mathbf{inv}(\tau,i+1)=\mathbf A_1\cup \mathbf B_1\cup \{i\}$. On a $i\notin\mathbf{inv}(\tau,i)$ et 
$i+1\notin\mathbf{inv}(\tau,i+1)$, donc 
\begin{equation}\label{neweq:0811:BIS}
\mathbf{inv}(\tau,i)\triangle\mathbf{inv}(\tau,i+1)=((\mathbf A_0\cup \mathbf B_0)\triangle(\mathbf A_1\cup \mathbf B_1))
\cup\{i,i+1\}. 
\end{equation}
Alors 
\begin{align}\label{NUMERO':2811:BIS}
&\nonumber |\mathbf{inv}(\tau,i)|-|\mathbf{inv}(\tau,i+1)|\equiv |\mathbf{inv}(\tau,i)\triangle\mathbf{inv}(\tau,i+1)|
=|(\mathbf A_0\cup \mathbf B_0)\triangle(\mathbf A_1\cup \mathbf B_1)|-2
\\ & \equiv |(\mathbf A_0\cup \mathbf B_0)\triangle(\mathbf A_1\cup \mathbf B_1)|
=|\tau^{-1}([\![\tau(i+1)+1,\tau(i)-1]\!])|=|[\![\tau(i+1)+1,\tau(i)-1]\!]|=\tau(i)-\tau(i+1)-1. 
\end{align}
où la première égalité suit de  \eqref{tyty:0811:BIS} et $x\equiv -x$ mod 2, la deuxième égalité suit de \eqref{neweq:0811:BIS}
combiné à $\{i,i+1\}\cap ((\mathbf A_0\cup \mathbf B_0)\triangle(\mathbf A_1\cup \mathbf B_1))\subset 
\{i,i+1\}\cap \{j|j\neq i,i+1\}=\emptyset$, la quatrième égalité suit de la combinaison de \eqref{toto:0811:BIS} et 
\eqref{tutu:0811:prime:BIS}, la cinquième égalité suit de la bijectivité de $\tau$. 

On déduit des égalités \eqref{NUMERO:2811:BIS} dans le premier cas et \eqref{NUMERO':2811:BIS} dans le second l'égalité  $|\mathbf{inv}(i+1)|-\tau(i+1)+i+1\equiv |\mathbf{inv}(i)|-\tau(i)+i$ 
mod 2. On a $|\mathbf{inv}(\tau,i)|-\tau(i)+i\equiv 0$ mod 2 d'après l'hypothèse de récurrence, ce qui implique 
$|\mathbf{inv}(\tau,i+1)|-\tau(i+1)+i+1\equiv 0$ mod 2. 
\end{proof}

\begin{defn} Soit $i\in[\![1,n-1]\!]$.

(a)  On note $p_i  : [\![1,n]\!]\to[\![1,n-1]\!]$ l'application donnée par 
$p_i(x)=x$ si $x \leq i$ et $p_i(x)=x-1$ si $x \geq i+1$. 

(b) On note $\mathrm{st}_i : [\![1,n-1]\!]\to[\![1,n]\!]$ l'application donnée par $x\mapsto x$ si $x\leq i$ et 
$x\mapsto x+1$ si $x\geq i+1$.  
\end{defn}

\begin{lemdef}
Soit $\tau\in\mathfrak S_{n-1}$ et $i\in[\![0,n]\!]$. Il existe un unique élément $\tau^{(i)}\in \mathfrak S_n$ satisfaisant 
les conditions suivantes : 

(a) $p_{\tau(i)}\circ\tau^{(i)}=\tau\circ p_i$
et $\tau^{(i)}(i)=\tau(i)$, $\tau^{(i)}(i+1)=\tau(i)+1$ si $i\neq 0,n$ ;  

(b) $\tau^{(0)}(1)=1$ et $\tau^{(0)}(x)=\tau(x-1)+1$ pour tout $x\in [\![2,n]\!]$ si $i=0$ ; 

(c) $\tau^{(n)}(n)=n$ et $\tau^{(n)}(x)=\tau(x)$ pour tout $x\in [\![1,n-1]\!]$ si $i=n$. 
\end{lemdef}

\begin{proof}
(a) Pour $j\in [\![0,n]\!]$, $\mathrm{st}_j\circ p_j$ est l'application de $[\![1,n]\!]$ dans lui-même
telle que $x\mapsto x$ pour $x\neq j+1$ et $j+1\mapsto j$. Si $\tau^{(i)}$ est une application de 
$[\![1,n]\!]$ dans lui-même satisfaisant les conditions dites, on a alors  
$\mathrm{st}_{\tau(i)}\circ\circ p_{\tau(i)}\circ\tau^{(i)}=\mathrm{st}_{\tau(i)}\circ\tau\circ p_i$
ce qui implique $\tau^{(i)}(x)=\mathrm{st}_{\tau(i)}\circ\tau\circ p_i(x)$
pour tout $x\neq i,i+1$ ainsi que $\tau^{(i)}(i)=\tau(i)$, $\tau^{(i)}(i+1)=\tau(i)+1$. 
Les conditions dites déterminent donc uniquement $\tau^{(i)}$ comme application de 
$[\![1,n]\!]$ dans lui-même. On vérifie alors que $\tau^{(i)}\in\mathfrak S_n$.  
(b,c) Pour $i=0,n$, l'application $\tau^{(i)}$ est la juxtaposition de deux permutations, donc est une permutation. 
\end{proof}

\begin{lem}\label{lem:124:1611:BIS}
Si $\tau\in\mathfrak S_{n-1}$ et $i\in[\![1,n-1]\!]$, alors $\epsilon(\tau^{(i)})=\epsilon(\tau)(-1)^{\tau(i)-i}$. 
\end{lem}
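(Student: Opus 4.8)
The plan is to evaluate $\epsilon(\tau^{(i)})$ by counting inversions and to reduce the statement to Lemme~\ref{lem:du:08:11:BIS}. Write $N(\sigma):=|\{(a,b)\in[\![1,m]\!]^2\mid a<b,\ \sigma(a)>\sigma(b)\}|$ for the number of inversions of $\sigma\in\mathfrak S_m$, so that $\epsilon(\sigma)=(-1)^{N(\sigma)}$. The target identity is then equivalent to the congruence $N(\tau^{(i)})\equiv N(\tau)+|\mathbf{inv}(\tau,i)|\pmod 2$, after which Lemme~\ref{lem:du:08:11:BIS} replaces $|\mathbf{inv}(\tau,i)|$ by $\tau(i)-i$ in the exponent. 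As input I would use the explicit description of $\tau^{(i)}$ extracted in the proof of the preceding Lemme-Définition, namely $\tau^{(i)}(x)=\mathrm{st}_{\tau(i)}\circ\tau\circ p_i(x)$ for $x\notin\{i,i+1\}$, together with $\tau^{(i)}(i)=\tau(i)$ and $\tau^{(i)}(i+1)=\tau(i)+1$.

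First I would partition the pairs $(a,b)$ with $a<b$ in $[\![1,n]\!]$ according to their intersection with $\{i,i+1\}$. For the pairs with $a,b\notin\{i,i+1\}$, I would observe that $p_i$ restricts to a strictly increasing bijection $[\![1,n]\!]\setminus\{i,i+1\}\to[\![1,n-1]\!]\setminus\{i\}$ and that $\mathrm{st}_{\tau(i)}$ is strictly increasing; hence such a pair is an inversion of $\tau^{(i)}$ if and only if its image under $p_i$ is an inversion of $\tau$. This puts these inversions in bijection with the inversions of $\tau$ not involving the index $i$, of which there are exactly $N(\tau)-|\mathbf{inv}(\tau,i)|$, since $\mathbf{inv}(\tau,i)$ is by definition the set of indices $j$ for which the unordered pair $\{i,j\}$ is an inversion of $\tau$.

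Next I would treat the pairs meeting $\{i,i+1\}$. The pair $(i,i+1)$ is never an inversion because $\tau(i)<\tau(i)+1$. For a pair involving exactly one of $i,i+1$ and a third index $x$, I would split into the cases $x<i$ and $x>i+1$ and, in each, evaluate $\mathrm{st}_{\tau(i)}$ according to whether $\tau(p_i(x))<\tau(i)$ or $>\tau(i)$; a short check shows that in all four subcases the inversion condition reduces exactly to $x$ (resp.\ $x-1$) lying in $\mathbf{inv}(\tau,i)$. Consequently the pairs through position $i$ contribute $|\mathbf{inv}(\tau,i)|$ inversions, and so do the pairs through position $i+1$. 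Adding the three groups gives $N(\tau^{(i)})=\big(N(\tau)-|\mathbf{inv}(\tau,i)|\big)+2\,|\mathbf{inv}(\tau,i)|=N(\tau)+|\mathbf{inv}(\tau,i)|$, whence $\epsilon(\tau^{(i)})=\epsilon(\tau)(-1)^{|\mathbf{inv}(\tau,i)|}=\epsilon(\tau)(-1)^{\tau(i)-i}$ by Lemme~\ref{lem:du:08:11:BIS}.

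The only delicate point is the bookkeeping in this last step: because the positions $\geq i+1$ and the values $\geq\tau(i)+1$ are \emph{both} shifted (through $p_i$ and $\mathrm{st}_{\tau(i)}$), one must align each subcase carefully so that the inversion condition collapses to the single sign condition $(x-i)(\tau(x)-\tau(i))<0$ defining $\mathbf{inv}(\tau,i)$. I expect no real obstacle beyond this off-by-one care; indeed, for the sign it even suffices to note modulo $2$ that the contributions of positions $i$ and $i+1$ cancel, leaving $N(\tau^{(i)})\equiv N(\tau)-|\mathbf{inv}(\tau,i)|$.
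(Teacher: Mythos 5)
Your proof is correct and follows essentially the same route as the paper's: both count inversions, split the pairs according to whether they meet $\{i,i+1\}$, show that the pairs through $i$ and through $i+1$ each contribute $|\mathbf{inv}(\tau,i)|$ while the remaining pairs biject (via $p_i$, resp.\ $\mathrm{st}_i$) with the inversions of $\tau$ not involving $i$, and conclude with the congruence $|\mathbf{inv}(\tau^{(i)})|\equiv|\mathbf{inv}(\tau)|+\tau(i)-i \pmod 2$ furnished by the lemme \ref{lem:du:08:11:BIS}. The paper merely packages the same count as explicit bijections $A\simeq A'$ and $B\simeq B'\simeq B''\simeq\mathbf{inv}(\tau,i)$; the substance is identical.
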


\begin{proof}
Si $p\geq 1$ et $\sigma\in\mathfrak S_p$, notons $\mathbf{inv}(\sigma):=\{(a,b)\in[\![1,p]\!]^2|a<b$ et $\sigma(a)>\sigma(b)\}$. On a 
alors 
\begin{equation}\label{def:eps:0811:BIS}
    \epsilon(\sigma)=(-1)^{|\mathbf{inv}(\sigma)|}. 
\end{equation}

Si $\tau\in\mathfrak S_{n-1}$ et $i\in[\![1,n-1]\!]$, on a une partition
\begin{equation}\label{dec:1:0811:BIS}
\mathbf{inv}(\tau)=A\sqcup B,     
\end{equation}
avec $A:=\{(a,b)\in\mathbf{inv}(\tau)|a\neq i$ et $b\neq i\}$ et $B:=\{(a,b)\in\mathbf{inv}(\tau)|a=i$ ou $b=i\}$. 
On a de même une partition 
\begin{equation}\label{dec:2:0811:BIS}
\mathbf{inv}(\tau^{(i)})=A'\sqcup B'\sqcup B'', 
\end{equation}
avec 
$$
A':=\{(a,b)\in\mathbf{inv}(\tau^{(i)})|a\notin\{i,i+1\}\text{ et }b\notin\{i,i+1\}\}, 
$$
$$
B':=\{(a,b)\in\mathbf{inv}(\tau^{(i)})|(a=i\text{ et }b\notin\{i,i+1\})\text{ ou }(b=i\text{ et }a\notin\{i,i+1\})\}
$$
$$
B'':=\{(a,b)\in\mathbf{inv}(\tau^{(i)})|(a=i+1\text{ et }b\notin\{i,i+1\})\text{ ou }(b=i+1\text{ et }a\notin\{i,i+1\})\}
$$
ceci du fait que $(i,i+1)\notin\mathbf{inv}(\tau^{(i)})$ car $\tau^{(i)}(i+1)=1+\tau^{(i)}(i)$.

On a encore des bijections  
\begin{equation}\label{isos:1:0811:BIS}
A\stackrel{\sim}{\to} A', \quad B\stackrel{\sim}{\to} B',\quad B\stackrel{\sim}{\to} B''
\end{equation}
induites respectivement par $(a,b)\mapsto (\mathrm{st}_i(a),\mathrm{st}_i(b))$ (application $A\to A'$), 
$(a,i)\mapsto (a,i)$ et $(i,b)\mapsto (i,b+1)$ (application $B\to B'$),
$(a,i)\mapsto (a,i+1)$ et $(i,b)\mapsto (i+1,b+1)$ (application $B\to B''$). Enfin on a une bijection 
\begin{equation}\label{isos:2:0811:BIS}
\mathbf{inv}(\tau,i)\stackrel{\sim}{\to} B
\end{equation}
donnée par $a\mapsto (a,i)$ si $a<i$ et $a\mapsto (i,a)$ si $i<a$. 

On a alors 
\begin{equation}\label{deja:fait:0811:BIS}
|\mathbf{inv}(\tau^{(i)})|=|A'|\sqcup |B'|\sqcup |B''|=(|A|\sqcup |B|)\sqcup |B|
=|\mathbf{inv}(\tau)|+|\mathbf{inv}(\tau,i)|
\equiv|\mathbf{inv}(\tau)|+\tau(i)-i 
\end{equation}
mod 2, où la première (resp. deuxième, troisième, quatrième) égalité suit de \eqref{dec:2:0811:BIS} (resp. \eqref{isos:1:0811:BIS}, 
\eqref{isos:2:0811:BIS}, lemme \ref{lem:du:08:11:BIS}). On a alors
$$
\epsilon(\tau^{(i)})=(-1)^{|\mathbf{inv}(\tau^{(i)})|}=(-1)^{|\mathbf{inv}(\tau)|+\tau(i)-i}
=(-1)^{|\mathbf{inv}(\tau)|}(-1)^{\tau(i)-i}=\epsilon(\tau)(-1)^{\tau(i)-i}.
$$
où la première (resp. deuxième, dernière) égalité suit de \eqref{def:eps:0811:BIS} (resp. \eqref{deja:fait:0811:BIS}, \eqref{def:eps:0811:BIS}). 
\end{proof}

\subsubsection{Diagramme commutatif impliquant les applications $(\underline f,\mathrm{sgn})$, $(\underline{\tilde f},\widetilde{\mathrm{sgn}})$ 
et $\underline{\mathrm{bij}}$}

\begin{defn}\label{def:2:17:0512}
On note 
$$
\underline{\tilde f} : \mathbb Z^{n-1}\times\mathfrak S_{n-1}\times[\![0,n]\!]\to \mathbf{Aff}(\mathbb R^{n-1},\mathbb R^n)\quad\operatorname{et}\quad 
\widetilde{\mathrm{sgn}}: \mathbb Z^{n-1}\times\mathfrak S_{n-1}\times[\![0,n]\!]\to\{\pm1\}
$$
les applications données par 
$$
\underline{\tilde f}(\tilde v,\tilde \sigma,i):=\underline\partial_i^n\circ \underline c_k(\tilde v,\tilde\sigma),\quad 
\widetilde{\mathrm{sgn}}(\tilde v,\tilde\sigma,i):=(-1)^i\epsilon(\tilde\sigma). 
$$
\end{defn}

\begin{defn}\label{def:transfo:w:BIS}
On note $w\mapsto w^{(0)}$ et $w\mapsto w^{(n)}$ les applications $\mathbb Z^{n-1}\to\mathbb Z^n$ données par 
$w^{(0)}:=(0,w)$ et $w^{(n)}:=(w,k-1)$. 
\end{defn}

\begin{defn}
On note $\underline{\mathrm{bij}} : \mathbb Z^{n-1}\times\mathfrak S_{n-1}\times[\![0,n]\!]\to 
\mathbb Z^n\times\mathfrak S_n\times[\![0,n]\!]$ l'application donnée par 
$(w,\tau,i)\mapsto (w \circ p_i,\tau^{(i)},\tau(i))$ 
si $i\neq 0,n$, par 
$(w,\tau,0)\mapsto(w^{(0)},\tau^{(0)},0)$, et par $(w,\tau,n)\mapsto (w^{(n)},\tau^{(n)},n)$. 
\end{defn}

\begin{lem}\label{lem:composition:BIS}
Le diagramme suivant commute
\begin{equation}\label{diag:b:0412}
\xymatrix{
\mathbb Z^{n-1}\times\mathfrak S_{n-1} \times[\![0,n]\!]
\ar^{\underline{\mathrm{bij}}}[rr]\ar_{(\underline{\tilde f},\widetilde{\mathrm{sgn}})}[rd]&&
\mathbb Z^n\times\mathfrak S_n \times[\![0,n]\!]
\ar^{(\underline f,\mathrm{sgn})}[ld]\\&\mathbf{Aff}(\mathbb R^{n-1},\mathbb R^{n-1})\times\{\pm1\}&}
\end{equation}
\end{lem}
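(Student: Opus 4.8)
The plan is to verify the two components of the pair separately: the equality of sign functions $\mathrm{sgn}\circ\underline{\mathrm{bij}}=\widetilde{\mathrm{sgn}}$ and the equality of affine maps $\underline f\circ\underline{\mathrm{bij}}=\underline{\tilde f}$. Both composites send a triple $(w,\tau,i)$ to an element of $\mathbf{Aff}(\mathbb R^{n-1},\mathbb R^n)\times\{\pm1\}$, and since, by Définition~\ref{def:1709:2311:BIS}, an affine map $\mathbb R^{n-1}\to\mathbb R^n$ is determined by its values on the affinely independent points $E_0^{n-1},\ldots,E_{n-1}^{n-1}$, it suffices to compare the two affine maps on these vertices. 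I would treat the generic case $i\in[\![1,n-1]\!]$ first and the two boundary cases $i=0,n$ separately, since the face maps $\underline\partial_i^n$ behave differently there.

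For the sign component, when $i\neq0,n$ one has $\mathrm{sgn}\circ\underline{\mathrm{bij}}(w,\tau,i)=(-1)^{\tau(i)}\epsilon(\tau^{(i)})$, and Lemme~\ref{lem:124:1611:BIS} gives $\epsilon(\tau^{(i)})=\epsilon(\tau)(-1)^{\tau(i)-i}$; multiplying, the factor $(-1)^{2\tau(i)-i}=(-1)^i$ reproduces $\widetilde{\mathrm{sgn}}(w,\tau,i)=(-1)^i\epsilon(\tau)$. For $i=0,n$ the permutations $\tau^{(0)}$ and $\tau^{(n)}$ are, by their defining formulas, obtained by letting $\tau$ act on a block of $n-1$ consecutive indices while fixing the remaining point, so $\epsilon(\tau^{(0)})=\epsilon(\tau^{(n)})=\epsilon(\tau)$, and the prefactors $(-1)^0$ and $(-1)^n$ match $\widetilde{\mathrm{sgn}}$ directly.

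The substance of the lemma is the affine component. The first observation is that each $\underline\partial_i^n$ is, in coordinates, the map doubling the $i$-th entry, $(x_1,\ldots,x_{n-1})\mapsto(x_1,\ldots,x_i,x_i,x_{i+1},\ldots,x_{n-1})$ (which is linear for $0<i<n$, prepends a $0$ for $i=0$, and appends a $1$ for $i=n$); in particular, for $0<i<n$ one has $\underline\partial_i^n(w)=w\circ p_i$, so the translation parts on the two sides already agree with the first slot of $\underline{\mathrm{bij}}$. Evaluating $\underline f\circ\underline{\mathrm{bij}}(w,\tau,i)=\underline c_k(w\circ p_i,\tau^{(i)})\circ\underline\partial_{\tau(i)}^n$ on a vertex $E_j^{n-1}$ is immediate, because the inner face sends $E_j^{n-1}$ to a vertex $E^n_{\bullet}$ and the outer $\underline c_k$ has explicit values on vertices; evaluating $\underline{\tilde f}(w,\tau,i)=\underline\partial_i^n\circ\underline c_k(w,\tau)$ on $E_j^{n-1}$ instead requires pushing the linear $\underline\partial_i^n$ through the scaling $1/k$. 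After cancelling the common translation part, the identity to prove reduces to the equality of two $0/1$-vectors in $\mathbb R^n$, namely $(\tau^{(i)})^*E_{\phi(j)}^n=\underline\partial_i^n(\tau^*E_j^{n-1})$, where $\phi(j)$ is the index produced by $\underline\partial_{\tau(i)}^n$, which skips the value $n-\tau(i)$.

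I expect this last support-matching identity to be the crux. Its proof is a comparison of the coordinate positions carrying a $1$: position $m$ on the left is governed by $\tau^{(i)}(m)$ and on the right by $\tau(p_i(m))$, and the two are linked precisely by the defining relation $p_{\tau(i)}\circ\tau^{(i)}=\tau\circ p_i$ of $\tau^{(i)}$. A short dichotomy according to whether $j\leq n-\tau(i)-1$ or $j\geq n-\tau(i)$ then shows that the thresholds defining the two supports coincide, using $p_{\tau(i)}(x)=x$ for $x\le\tau(i)$ and $p_{\tau(i)}(x)=x-1$ for $x\ge\tau(i)+1$. The boundary cases are settled by the same vertex comparison: for $i=0$ one uses $\tau^{(0)}(1)=1$ and $\tau^{(0)}(x)=\tau(x-1)+1$ together with $w^{(0)}=(0,w)$, while for $i=n$ the only real point is that appending a $1$ after the scaling $1/k$ forces the last coordinate to equal $k=(k-1)+1$, which is exactly absorbed by the shift $w^{(n)}=(w,k-1)$ and the value $\tau^{(n)}(n)=n$; this explains the otherwise mysterious $k-1$ in Définition~\ref{def:transfo:w:BIS}.
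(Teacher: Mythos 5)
Your proof is correct and follows essentially the same route as the paper's: the same split into the sign and affine components with the cases $i\in[\![1,n-1]\!]$, $i=0$, $i=n$ treated separately, the sign handled via Lemme \ref{lem:124:1611:BIS}, and the affine identity reduced to the defining relation $p_{\tau(i)}\circ\tau^{(i)}=\tau\circ p_i$ — the only (cosmetic) difference being that you evaluate the affine maps on the vertices $E_j^{n-1}$ and compare supports of $0/1$-vectors, whereas the paper manipulates general coordinate formulas $(t_1,\ldots,t_{n-1})$. As a minor bonus, you also verify the sign equality at $i=0,n$ (via $\epsilon(\tau^{(0)})=\epsilon(\tau^{(n)})=\epsilon(\tau)$), a check the paper's proof leaves implicit.
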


\begin{proof}
Soit $(w,\tau,i)\in \mathbb Z^{n-1}\times\mathfrak S_{n-1}\times[\![0,n]\!]$. Supposons $i\neq 0,n$. 
Alors
$$
\mathrm{sgn}\circ\underline{\mathrm{bij}}(w,\tau,i)
=\mathrm{sgn}(w\circ p_i,\tau^{(i)},\tau(i))=\epsilon(\tau^{(i)})(-1)^{\tau(i)}
=\epsilon(\tau)(-1)^{i}=\widetilde{\mathrm{sgn}}(w,\tau,i) 
$$
d'après le lemme \ref{lem:124:1611:BIS}. De plus 
\begin{align*}
&\underline f\circ\underline{\mathrm{bij}}(w,\tau,i)
=\underline f(w\circ p_i,\tau^{(i)},\tau(i))
=\underline c_k(w\circ p_i,\tau^{(i)})\circ\underline\partial_{\tau(i)}^n
\\ & 
=[(t_1,\ldots,t_{n-1})\mapsto (w\circ p_i+(\tau^{(i)})^*(t_1,\ldots,t_{\tau(i)},t_{\tau(i)},\ldots,t_{n-1}))/k]
\\ & 
=[(t_1,\ldots,t_{n-1})\mapsto (w\circ p_i+(\tau^{(i)})^*(t_{p_{\tau(i)}(1)},\ldots,t_{p_{\tau(i)}(n)}))/k]
\\ & 
=[(t_1,\ldots,t_{n-1})\mapsto (w\circ p_i+(t_{p_{\tau(i)} \circ \tau^{(i)}(1)},\ldots,t_{p_{\tau(i)} \circ \tau^{(i)}(n)}))/k]
\\ & =[(t_1,\ldots,t_{n-1})\mapsto (w\circ p_i+(t_{\tau\circ p_i(1)},\ldots,t_{\tau\circ p_i(n)}))/k]
\\ & =\underline\partial_i^n\circ [(t_1,\ldots,t_{n-1})\mapsto (w+(t_{\tau(1)},\ldots,t_{\tau(n-1)}))/k]
\\ & =\underline\partial_i^n\circ [(t_1,\ldots,t_{n-1})\mapsto (w+\tau^*(t_1,\ldots,t_{n-1}))/k]
=\underline\partial_i^n\circ \underline c_k(w,\tau)=\underline{\tilde f}(w,\tau,i)
\end{align*}
où la sixième égalité provient de $\tau \circ p_{i} =p_{\tau(i)}\circ \tau^{(i)}$ (égalité d'applications $[\![1,n]\!]\to
[\![1,n-1]\!]$) et la septième égalité suit de 
$\underline\partial_i^n(x)=x\circ p_i$ pour $x\in\mathbb R^{n-1}=\mathrm{Appl}([\![1,n-1]\!],\mathbb R)$. 

Si $i=0$, alors 
\begin{align*}
    & \underline f\circ\underline{\mathrm{bij}}(w,\tau,0)=\underline f(w^{(0)},\tau^{(0)},0)
    =\underline c_k(w^{(0)},\tau^{(0)}) \circ \underline\partial^n_0
    \\ & =[(t_1,\ldots,t_{n-1})\mapsto (w^{(0)}+(\tau^{(0)})^*(0,t_1,\ldots,t_{n-1}))/k]
    \\ & =[(t_1,\ldots,t_{n-1})\mapsto ((0,w)+(0,\tau^*(t_1,\ldots,t_{n-1})))/k]
    \\ & =[(t_1,\ldots,t_{n-1})\mapsto (0,w+\tau^*(t_1,\ldots,t_{n-1}))/k]
    =\underline\partial^n_0\circ \underline c_k(w,\tau)=\underline{\tilde f}(w,\tau,0). 
\end{align*}

Si $i=n$, alors 
\begin{align*}
    & \underline f\circ\underline{\mathrm{bij}}(w,\tau,n)=\underline f(w^{(n)},\tau^{(n)},n)
    =\underline c_k(w^{(n)},\tau^{(n)}) \circ \underline\partial^n_n
    \\ & =[(t_1,\ldots,t_{n-1})\mapsto (w^{(n)}+(\tau^{(n)})^*(t_1,\ldots,t_{n-1},1))/k]
    \\ & =[(t_1,\ldots,t_{n-1})\mapsto ((w,k-1)+(\tau^*(t_1,\ldots,t_{n-1}),1))/k]
    \\ & =[(t_1,\ldots,t_{n-1})\mapsto (w+\tau^*(t_1,\ldots,t_{n-1}),k)/k]
    \\ & =[(t_1,\ldots,t_{n-1})\mapsto ((w+\tau^*(t_1,\ldots,t_{n-1}))/k,1)]
    \\ & = \underline\partial^n_n\circ [(t_1,\ldots,t_{n-1})\mapsto (w+\tau^*(t_1,\ldots,t_{n-1}))/k]
    =\underline\partial^n_n\circ \underline c_k(w,\tau)=\underline{\tilde f}(w,\tau,n). 
\end{align*}
\end{proof}

\subsubsection{Battages et transpositions}

\begin{defn}
Si $n\geq 1$ et $n_1,\ldots,n_k$ sont des entiers positifs ou nuls avec $n_1+\cdots+n_k=n$, on note $\mathfrak S_{n_1,\ldots,n_k}$ 
l'ensemble des éléments $\sigma\in\mathfrak S_n$ tel que pour tout $i=1,\ldots,k$, la restriction de $\sigma$ à 
$n_1+\cdots+n_{i-1}+[\![1,n_i]\!]$ est croissante. 
\end{defn}

Si $n_i=0$, l'ensemble $[\![1,n_i]\!]$ est vide, la condition relative à $i$ est alors automatiquement satisfaite. 

\begin{lem}\label{lem:toto:0911:BIS}
Soit $n_1,\ldots,n_k$ des entiers $\geq 1$ et $\sigma \in \mathfrak S_{n_1,\ldots,n_k}$ et $i\in[\![1,n-1]\!]$ avec 
$n:=n_1+\cdots+n_k$. Alors $s_{i,i+1} \circ \sigma \notin \mathfrak S_{n_1,\ldots,n_k}$ si et seulement si 
$\sigma^{-1}(i) \notin \{n_1,n_1+n_2,\ldots,n_1+\cdots+n_k\}$ et $\sigma^{-1}(i+1)=\sigma^{-1}(i)+1$. 
%il existe $j \in [\![1,n]\!]$ tel que $\sigma(j)=i$, $\sigma(j+1)=i+1$. 
\end{lem}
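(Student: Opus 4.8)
The plan is to translate both sides of the equivalence into a purely local condition on consecutive positions and then to isolate the single pair of positions affected by the transposition. Write $p:=\sigma^{-1}(i)$ and $q:=\sigma^{-1}(i+1)$, and set $\tau:=s_{i,i+1}\circ\sigma$. Since $s_{i,i+1}$ merely exchanges the values $i$ and $i+1$, the permutation $\tau$ agrees with $\sigma$ except that $\tau(p)=i+1$ and $\tau(q)=i$, while $\tau(x)=\sigma(x)$ for all $x\notin\{p,q\}$. First I would record the evident reformulation of the defining condition of $\mathfrak S_{n_1,\ldots,n_k}$: a permutation $\rho\in\mathfrak S_n$ lies in $\mathfrak S_{n_1,\ldots,n_k}$ if and only if $\rho(j)<\rho(j+1)$ for every $j\in[\![1,n-1]\!]$ with $j\notin\{n_1,n_1+n_2,\ldots,n_1+\cdots+n_k\}$, i.e. for every $j$ such that $j$ and $j+1$ lie in the same block. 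Indeed, being increasing on a block is equivalent to being increasing across each consecutive pair inside it.

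Next I would isolate the elementary fact that governs the transposition: for integers $a<b$ one has $s_{i,i+1}(a)>s_{i,i+1}(b)$ if and only if $a=i$ and $b=i+1$. This is a short case check: if $\{a,b\}\neq\{i,i+1\}$ the relative order of $a$ and $b$ is preserved by $s_{i,i+1}$, whereas if $a=i$ and $b=i+1$ the order is reversed. Combining this with the previous reformulation, $\tau$ fails to lie in $\mathfrak S_{n_1,\ldots,n_k}$ exactly when there is some block-internal consecutive pair $(j,j+1)$ with $\tau(j)>\tau(j+1)$. Since $\sigma$ is increasing on its blocks we have $\sigma(j)<\sigma(j+1)$ on any such pair, so by the elementary fact the reversal $\tau(j)>\tau(j+1)$ occurs precisely when $\sigma(j)=i$ and $\sigma(j+1)=i+1$, that is, when $j=p$ and $j+1=q$.

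It then remains to package this as the stated equivalence. The existence of such a violating pair means precisely that $q=p+1$ and that $p,p+1$ lie in the same block; the latter is, by the reformulation, the condition $p\notin\{n_1,\ldots,n_1+\cdots+n_k\}$, i.e. $\sigma^{-1}(i)\notin\{n_1,\ldots,n_1+\cdots+n_k\}$, and the former is $\sigma^{-1}(i+1)=\sigma^{-1}(i)+1$. Conversely, if these two conditions hold then $(p,p+1)$ is a block-internal pair on which $\tau$ strictly decreases ($\tau(p)=i+1>i=\tau(p+1)$), so $\tau\notin\mathfrak S_{n_1,\ldots,n_k}$.

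I do not anticipate a genuine obstacle, as both ingredients are elementary. The only point demanding care is the degenerate configuration $q=p+1$ with $p$ a block end, which must be excluded: there $p$ and $p+1$ belong to consecutive blocks, the pair $(p,p+1)$ is subject to no increasingness constraint, and one checks directly that $\tau$ remains a shuffle. This is exactly why the membership condition $\sigma^{-1}(i)\notin\{n_1,\ldots,n_1+\cdots+n_k\}$ cannot be dropped from the statement.
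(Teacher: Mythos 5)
Your proof is correct, and it takes a cleaner route than the paper's. The paper fixes the blocks $I_\alpha\ni\sigma^{-1}(i)$ and $I_{\alpha'}\ni\sigma^{-1}(i+1)$ and proves two separate equivalences through the intermediate condition $\alpha=\alpha'$: first that $\alpha=\alpha'$ iff $s_{i,i+1}\circ\sigma\notin\mathfrak S_{n_1,\ldots,n_k}$ (the forward direction needs the observation that two values $i,i+1$ lying in the same block force their positions to be adjacent, and the converse is done block by block, composing $\sigma_{|I_\beta}$ with auxiliary increasing maps to check monotonicity of every restriction of $s_{i,i+1}\circ\sigma$); then that $\alpha=\alpha'$ iff the stated positional condition. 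You instead replace the global ``increasing on each block'' condition by its local form (increasing across each block-internal consecutive pair) and combine it with the elementary fact that $s_{i,i+1}$ reverses the order of exactly one increasing pair of values, namely $(i,i+1)$. This buys two things: the case analysis over blocks disappears entirely, and the adjacency fact the paper must prove separately (that $\sigma^{-1}(i+1)=\sigma^{-1}(i)+1$ whenever the two values sit in one block) is never needed --- your characterization ``a violating pair exists iff $q=p+1$ and $p$ is not a block end'' is established directly, and configurations such as $p,q$ in one block with $q>p+1$ are handled vacuously without your having to know they cannot occur. The paper's longer route does yield some by-products (the explicit description of how $s_{i,i+1}\circ\sigma$ acts on each block) whose style recurs in neighbouring lemmas of the paper, but as a proof of this statement yours is complete and more economical.
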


\begin{proof} Soit $\sigma \in \mathfrak S_{n_1,\ldots,n_k}$ et $i\in[\![1,n-1]\!]$. 
Soit $j:=\sigma^{-1}(i)$, $j':=\sigma^{-1}(i+1)$. Pour $\alpha\in[\![1,k]\!]$, notons $I_\alpha:=
n_1+\cdots+n_{\alpha-1}+[\![1,n_\alpha]\!]$. Alors on a une partition $[\![1,n]\!]=\sqcup_{\alpha=1}^k I_\alpha$. 
Soit $\alpha,\alpha'\in[\![1,k]\!]$ les indices tels que 
$j\in I_\alpha$, $j'\in I_{\alpha'}$. Montrer l'équivalence annoncée, on montre d'abord
l'équivalence $(\alpha=\alpha')\iff (s_{i,i+1}\circ \sigma\notin\mathfrak S_{n_1,\ldots,n_k})$, 
puis l'équivalence $(\alpha=\alpha')\iff (\sigma^{-1}(i) \notin \{n_1,n_1+n_2,\ldots,n_1+\cdots+n_k\}$ 
et $\sigma^{-1}(i+1)=\sigma^{-1}(i)+1)$. 

{\it Première étape : équivalence $(\alpha=\alpha')\iff (s_{i,i+1}\circ \sigma\notin\mathfrak S_{n_1,\ldots,n_k})$.}
(a) Supposons $\alpha=\alpha'$. Alors on a $j,j'\in I_\alpha$. La restriction $\sigma_{|I_\alpha}$ est strictement croissante,
et $\sigma(j)=i$, $\sigma(j')=i+1$, donc $j<j'$. De plus, si on avait $j'>j+1$, alors $j+1\in I_\alpha$ et 
$\sigma(j)<\sigma(j+1)<\sigma(j')$ donc $i<\sigma(j+1)<i+1$ ce qui est impossible, $\sigma(j+1)$ étant entier. 
Donc $j'=j+1$. Comme $j+1\in I_\alpha$, on a nécessairement $j\neq n_1+\cdots+n_\alpha$. De plus, 
la restriction de $s_{i,i+1}\circ \sigma$ à $I_\alpha$ est telle que $j\mapsto i+1$ et $j+1\mapsto i$ ; 
cette restriction n'est donc pas strictement croissante, donc $s_{i,i+1}\circ \sigma\notin\mathfrak S_{n_1,\ldots,n_k}$. 

Supposons $\alpha\neq\alpha'$. Comme $\sigma(I_\alpha)\notni i+1$, la restriction de $s_{i,i+1}\circ \sigma$ à $I_\alpha$ est 
égale à $a\circ \sigma_{|I_\alpha}$, où $a : \sigma(I_\alpha)\to [\![1,n]\!]$ est donnée par $x\mapsto x$ si $x\neq i$ et $i\mapsto i+1$. 
L'application $a$ est croissante, donc il est de même de  
$(s_{i,i+1}\circ \sigma)_{|I_\alpha}$. De même, $\sigma(I_{\alpha'})\notni i$, 
donc la restriction de $s_{i,i+1}\circ \sigma$ à $I_{\alpha'}$ est égale à $a'\circ \sigma_{|I_\alpha}$, où 
$a' : \sigma(I_{\alpha'})\to [\![1,n]\!]$ est donnée par $x\mapsto x$ si $x\neq i+1$ et $i+1\mapsto i$. 
L'application $a'$ est croissante, donc il est de même de  $(s_{i,i+1}\circ \sigma)_{|I_{\alpha'}}$. Enfin pour tout 
$\beta\neq\alpha,\alpha'$, on a $(s_{i,i+1}\circ \sigma)_{|I_\beta}=\sigma_{|I_\beta}$ donc $(s_{i,i+1}\circ \sigma)_{|I_\beta}$ est 
croissante. Donc $s_{i,i+1}\circ \sigma\in\mathfrak S_{n_1,\ldots,n_k}$. 

On a donc équivalence entre $s_{i,i+1}\circ \sigma\notin\mathfrak S_{n_1,\ldots,n_k}$ et $\alpha=\alpha'$. 

{\it Seconde étape : $(\alpha=\alpha')\iff (\sigma^{-1}(i) \notin \{n_1,n_1+n_2,\ldots,n_1+\cdots+n_k\}$ 
et $\sigma^{-1}(i+1)=\sigma^{-1}(i)+1)$.}
On a vu que si $\alpha=\alpha'$, alors $j'=j+1$ et $j\neq n_1+\cdots+n_\alpha$. Comme $j\in I_\alpha$, on a aussi 
$j\neq n_1+\cdots+n_\beta$ pour tout $\beta\neq\alpha$, donc $j\notin\{n_1,\ldots,n_1+\cdots+n_k\}$. 

Inversement, si $j'=j+1$ et $j\notin\{n_1,\ldots,n_1+\cdots+n_k\}$, on a $j\neq n_1+\cdots+n_\alpha$ donc 
$j\in n_1+\cdots+n_{\alpha-1}+[\![1,n_\alpha-1]\!]$ donc $j'=j+1\in n_1+\cdots+n_{\alpha-1}+[\![1,n_\alpha]\!]=I_\alpha$, 
donc $\alpha=\alpha'$. 
\end{proof}

\subsubsection{La bijection $\mathrm{bij}$}

\begin{defn}\label{def:ens:n:k:1210:BIS}
(a) On note $[\![0,k-1]\!]^n_{\leq}:=\{(v_1,\ldots,v_n)\in \mathbb Z^n|0\leq v_1\leq\ldots\leq v_n\leq k-1\}$.

(b) On note $\mathrm{Ens}_n^k$ l'ensemble des couples $(v,\sigma)\in [\![0,k-1]\!]^n_{\leq}\times\mathfrak S_n$, tels que
$\sigma\in\mathfrak S_{|v^{-1}(0)|,\ldots,|v^{-1}(k-1)|}$. 
\end{defn} 
Si $(v,\sigma)\in \mathrm{Ens}_n^k$, on a pour tout $i\in [\![1,n-1]\!]$ l'implication
\begin{equation}\label{TOTO:2311:BIS}
    (v_i=v_{i+1})\implies(\sigma(i)<\sigma(i+1)). 
\end{equation}

\begin{lem}
Soit $(v,\sigma,i) \in \mathrm{Ens}_n^k\times[\![0,n]\!]$. 

(a) Si $i \neq 0,n$, la condition $\mathrm{invol}(v,\sigma,i) \notin \mathrm{Ens}_n^k\times[\![0,n]\!]$ est équivalente à 
la conjonction de 
%l'existence de $j\in [\![1,n]\!]$ avec  $j \notin \{|v^{-1}(0)|,\ldots,|v^{-1}(0)|+\cdots+|v^{-1}(k-1)|\}$ tel que 
%$\sigma(j)=i$ et $\sigma(j+1)=i+1$; 
$\sigma^{-1}(i)\notin\{|v^{-1}(0)|,\ldots,|v^{-1}(0)|+\cdots+|v^{-1}(k-1)|\}$ et $\sigma^{-1}(i+1)=\sigma^{-1}(i)+1$.

(b) Si $i=n$, la condition $\mathrm{invol}(v,\sigma,i) \notin \mathrm{Ens}_n^k\times[\![0,n]\!]$ est équivalente à la conjonction 
$\sigma(n)=n$ et $v(n)=k-1$ ; 

(c) Si $i=0$, la condition $\mathrm{invol}(v,\sigma,i) \notin \mathrm{Ens}_n^k\times[\![0,n]\!]$ est équivalente à la conjonction 
$\sigma(1)=1$ et $v(1)=0$. 
\end{lem}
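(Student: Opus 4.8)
Le plan est de traiter les trois cas séparément, le cas (a) se ramenant directement au lemme \ref{lem:toto:0911:BIS}, tandis que les cas (b) et (c) sont symétriques l'un de l'autre. Dans chaque cas, on observe que le troisième facteur de $\mathrm{invol}(v,\sigma,i)$ reste dans $[\![0,n]\!]$, de sorte que la non-appartenance à $\mathrm{Ens}_n^k\times[\![0,n]\!]$ équivaut à celle du couple $(v',\sigma')$ produit à $\mathrm{Ens}_n^k$.

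Pour (a), comme $\mathrm{invol}(v,\sigma,i)=(v,s_{i,i+1}\circ\sigma,i)$ laisse $v$ inchangé, la condition $\mathrm{invol}(v,\sigma,i)\notin\mathrm{Ens}_n^k\times[\![0,n]\!]$ équivaut à $s_{i,i+1}\circ\sigma\notin\mathfrak S_{|v^{-1}(0)|,\ldots,|v^{-1}(k-1)|}$. J'appliquerais alors le lemme \ref{lem:toto:0911:BIS} avec les tailles de blocs $n_\alpha:=|v^{-1}(\alpha-1)|$, ce qui est licite puisque $\sigma\in\mathfrak S_{n_1,\ldots,n_k}$. La seule précaution est que ce lemme suppose les $n_\alpha\geq 1$ alors que $v$ peut sauter des valeurs : je noterais que supprimer les blocs nuls ne change ni le groupe $\mathfrak S_{\ldots}$ ni l'ensemble des sommes partielles $\{|v^{-1}(0)|,\ldots,|v^{-1}(0)|+\cdots+|v^{-1}(k-1)|\}$ (un bloc nul ne fait que dupliquer une somme partielle consécutive), de sorte que la conclusion du lemme fournit exactement la condition annoncée.

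Pour (b), je poserais $j_0:=\sigma^{-1}(n)$, d'où $\sigma^*(e_n^n)=e^n_{j_0}$ et $\mathrm{invol}(v,\sigma,n)=(v+e^n_{j_0},c\circ\sigma,0)$. L'observation-clé est que $\sigma(j_0)=n$ étant la valeur maximale et $\sigma$ étant croissante sur le bloc de $v$ contenant $j_0$, la position $j_0$ est nécessairement la dernière de son bloc ; comme le bloc de tête (valeur $k-1$) se termine en $n$, on en déduit l'équivalence $v_{j_0}=k-1\iff(\sigma(n)=n\text{ et }v_n=k-1)$. Il reste à établir $\bigl(\mathrm{invol}(v,\sigma,n)\notin\mathrm{Ens}_n^k\times[\![0,n]\!]\bigr)\iff v_{j_0}=k-1$. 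Le sens direct est immédiat : si $v_{j_0}=k-1$, la $j_0$-ème coordonnée de $v+e^n_{j_0}$ vaut $k$, hors de $[\![0,k-1]\!]$. Pour la contraposée, supposant $v_{j_0}<k-1$, je vérifierais que $(v+e^n_{j_0},c\circ\sigma)\in\mathrm{Ens}_n^k$ : l'incrémentation garde les coordonnées dans $[\![0,k-1]\!]$ et préserve la croissance (le seul point à contrôler, lorsque $j_0<n$, étant $v_{j_0}+1\leq v_{j_0+1}$, qui résulte de ce que $j_0$ est la dernière position de son bloc) ; enfin $c\circ\sigma$ décale toutes les valeurs de $+1$ sauf la valeur $n$ en $j_0$ qui devient $1$, et comme $j_0$ passe en tête du bloc suivant de $v+e^n_{j_0}$, on vérifie que $c\circ\sigma$ reste croissante sur chaque bloc.

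Le cas (c) s'obtiendrait par l'argument symétrique appliqué à $j_1:=\sigma^{-1}(1)$, avec $\mathrm{invol}(v,\sigma,0)=(v-e^n_{j_1},c^{-1}\circ\sigma,n)$ : ici $\sigma(j_1)=1$ est la valeur minimale, donc $j_1$ est la première position de son bloc, d'où $v_{j_1}=0\iff(\sigma(1)=1\text{ et }v_1=0)$, et la sortie de $\mathrm{Ens}_n^k$ se produit exactement quand $v_{j_1}=0$ (sinon la décrémentation reste dans $[\![0,k-1]\!]$, préserve la croissance car $v_{j_1-1}<v_{j_1}$, et $c^{-1}\circ\sigma$ reste croissante par blocs). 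La principale difficulté, et le cœur combinatoire de la démonstration, est la vérification dans (b) et (c) que la permutation tordue $c^{\pm 1}\circ\sigma$ reste un battage pour la structure de blocs modifiée ; tout le reste n'est qu'une traduction soigneuse des définitions.
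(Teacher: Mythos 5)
Votre démonstration est correcte et suit essentiellement la même voie que celle de l'article : le point (a) se ramène au lemme \ref{lem:toto:0911:BIS} exactement comme dans le texte, et les points (b)--(c) effectuent la même vérification que le vecteur incrémenté/décrémenté $v\pm e^n_{j}$ et la permutation tordue $c^{\pm1}\circ\sigma$ respectent la nouvelle structure de blocs, simplement organisée autour de l'observation unique que $\sigma^{-1}(n)$ (resp. $\sigma^{-1}(1)$) est la dernière (resp. première) position de son bloc, là où l'article scinde en deux sous-cas ($\sigma(n)\neq n$, puis $\sigma(n)=n$ et $v(n)<k-1$). Votre remarque explicite que les blocs nuls ne changent ni $\mathfrak S_{n_1,\ldots,n_k}$ ni l'ensemble des sommes partielles est une précision bienvenue, car l'article applique le lemme \ref{lem:toto:0911:BIS} (énoncé pour des $n_\alpha\geq 1$) sans commenter ce point.
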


\begin{proof} (a) On a $\sigma\in\mathfrak S_{|v^{-1}(0)|,\ldots,|v^{-1}(0)|+\cdots+|v^{-1}(k-1)|}$ et 
$\mathrm{invol}(v,\sigma,i)=(v,s_{i,i+1}\circ\sigma,i)$. On a $v\in[\![0,k-1]\!]^n_{\leq}$ donc on a 
équivalence entre $\mathrm{invol}(v,\sigma,i)\notin \mathrm{Ens}_n^k\times[\![0,n]\!]$ et 
$s_{i,i+1}\circ\sigma\notin \mathfrak S_{|v^{-1}(0)|,\ldots,|v^{-1}(0)|+\cdots+|v^{-1}(k-1)|}$. Le résultat est alors conséquence du lemme
\ref{lem:toto:0911:BIS}. 

(b) Rappelons que $\mathrm{invol}(v,\sigma,n)=(v+e^n_{\sigma^{-1}(n)},c\circ\sigma,0)$. 

Notons $n_\alpha:=|v^{-1}(\alpha-1)|$ pour $\alpha=1,\ldots,k$. Alors on a une partition $[\![1,n]\!]=\sqcup_{\alpha=1}^k I_\alpha$, 
avec $I_\alpha=n_1+\cdots+n_{\alpha-1}+[\![1,n_\alpha]\!]$ (avec la convention que $I_\alpha=\emptyset$ si $n_\alpha=0$), et 
$v$ prend la valeur $\alpha-1$ sur $I_\alpha$, pour tout $\alpha$. Rappelons l'identification de $[\![0,k-1]\!]^n_{\leq}$ 
à l'ensemble $\mathrm{Appl}_{\leq}([\![1,n]\!],[\![0,k-1]\!])$. 
On a $\sigma^{-1}(n)\in\{n_1+\cdots+n_\beta|n_\beta\neq 0\}$ donc 
$v+e^n_{\sigma^{-1}(n)}\in \mathrm{Appl}_{\leq}([\![1,n]\!],[\![0,k]\!])$. De plus, $v+e^n_{\sigma^{-1}(n)}$ atteint la valeur 
$k$ si et seulement si $n_k\neq 0$ et $\sigma^{-1}(n)=n_1+\cdots+n_k$. La conjonction de ces conditions est équivalente à celle de $v(n)=k-1$
et $\sigma(n)=n$. Ceci montre l'équivalence entre $v+e^n_{\sigma^{-1}(n)}\notin [\![0,k-1]\!]^n_{\leq}$ et la conjonction des conditions 
$v(n)=k-1$ et $\sigma(n)=n$.

Alors si $v(n)=k-1$ et $\sigma(n)=n$, on a $v+e^n_{\sigma^{-1}(n)}\notin [\![0,k-1]\!]^n_{\leq}$
donc $\mathrm{invol}(v,\sigma,0)\notin\mathrm{Ens}_n^k\times[\![0,n]\!]$. 

Si $v(n)<k-1$ ou $\sigma(n)\neq n$, montrons que $\mathrm{invol}(v,\sigma,0)\in\mathrm{Ens}_n^k\times[\![0,n]\!]$.
D'une part, on a $w:=v+e^n_{\sigma^{-1}(n)}\in [\![0,k-1]\!]^n_{\leq}$. Montrons qu'on a d'autre part 
$c\circ\sigma\in\mathfrak S_{|w^{-1}(0)|,\ldots,|w^{-1}(k-1)|}$, ceci dans chacun des cas $\sigma(n)\neq n$
et ($\sigma(n)=n$ et $v(n)<k-1$). 

{\it Premier cas : $\sigma(n)\neq n$.} Soit $\alpha$ l'indice tel que $\sigma^{-1}(n)\in I_\alpha$. On a nécessairement $\alpha<k$ et $n_\alpha>0$ ; de plus
$\sigma^{-1}(n)=\mathrm{max}(I_\alpha)$.
Rappelons que $(|v^{-1}(0)|,\ldots,|v^{-1}(k-1)|)=(n_1,\ldots,n_k)$, alors $(|w^{-1}(0)|,\ldots,|w^{-1}(k-1)|)
=(n_1,\ldots,n_\alpha-1,n_{\alpha+1}+1,\ldots,n_k)$, et la partition de $[\![1,n]\!]$ associée à $w$ est 
$(J_1,\ldots,J_k)$ avec $J_\beta=I_\beta$ pour $\beta\in [\![1,k]\!]-\{\alpha,\alpha+1\}$, 
$J_\alpha=I_\alpha-\{\mathrm{max}(I_\alpha)\}$, $J_{\alpha+1}=\{\mathrm{max}(I_\alpha)\}\cup I_{\alpha+1}$. Montrons que 
$c\circ\sigma\in\mathfrak S_{n_1,\ldots,n_\alpha-1,n_{\alpha+1}+1,\ldots,n_k}$.  
Si $\beta\in[\![1,k]\!]-\{\alpha\}$, alors $\sigma(I_\beta)\notni n$. La restriction de $c$ à 
$\sigma(I_\beta)$ est donc $x\mapsto x+1$, qui est strictement croissante. Donc la restriction de $c\circ\sigma$ à $I_\beta$ est 
strictement croissante. En particulier, si $\beta\in[\![1,k]\!]-\{\alpha,\alpha+1\}$, la restriction de $c\circ\sigma$ à $J_\beta=I_\beta$ 
est strictement croissante. 

Rappelons que la restriction de $c\circ\sigma$ à $I_{\alpha+1}$ est strictement croissante. On a $J_\alpha=
\{\mathrm{max}(I_\alpha)\}\cup I_{\alpha+1}$ avec $\mathrm{max}(I_\alpha)\leq I_{\alpha+1}$ et 
$c\circ\sigma(\mathrm{max}(I_\alpha))=0$, donc la restriction de $c\circ\sigma$ à $J_{\alpha+1}$ est strictement croissante. 

Comme $J_\alpha=I_\alpha-\{\mathrm{max}(I_\alpha)\}$ et que $\mathrm{max}(I_\alpha)=\sigma^{-1}(n)$, on a 
$\sigma(J_\alpha)\notni n$. La restriction de $c$ à $\sigma(J_\beta)$ est donc $x\mapsto x+1$, qui est strictement croissante. Donc la 
restriction de $c\circ\sigma$ à $J_\alpha$ est strictement croissante. 

Donc on a $c\circ\sigma\in\mathfrak S_{|w^{-1}(0)|,\ldots,|w^{-1}(k-1)|}$. 

{\it Deuxième cas : $\sigma(n)=n$ et $v(n)<k-1$.} Soit $p:=1+\mathrm{max}\{i\in[\![0,k-1]\!]|
|v^{-1}(i)|\neq 0\}$; alors $p<k$. Alors $(|v^{-1}(0)|,\ldots,|v^{-1}(k-1)|)=(n_1,\ldots,n_p,0,\ldots,0)$. 
Comme $\sigma(n)=n$, on a $\sigma^{-1}(n)=n=n_1+\cdots+n_p$. Donc 
$(|w^{-1}(0)|,\ldots,|w^{-1}(k-1)|)=(n_1,\ldots,n_p-1,1,0,\ldots,0)$, la partition 
correspondant à $w$ étant donnée par $(J_1,\ldots,J_k)$ avec $J_\alpha=I_\alpha$ pour $\alpha\neq p,p+1$, 
$J_p:=I_p-\{n\}$, et $J_{p+1}=\{n\}$. Si $\alpha\neq p,p+1$, $\sigma(I_\alpha)\notni n$, donc la restriction de 
$c$ à $\sigma(I_\alpha)$ est $x\mapsto x+1$ qui est strictement croissante, donc la restriction de $c\circ\sigma$
à $J_\alpha=I_\alpha$ est strictement croissante. On a $J_p=I_p-\{n\}$ et $\sigma(n)=n$, donc $\sigma(J_p)\notni n$, 
donc la restriction de $c$ à $\sigma(J_p)$ est $x\mapsto x+1$ qui est strictement croissante, donc la restriction de $c\circ\sigma$
à $J_p$ est strictement croissante. Enfin, $J_{p+1}$ est un singleton, donc la restriction de $c\circ\sigma$ à cet ensemble est 
strictement croissante.  

Donc $c\circ\sigma\in \mathfrak S_{n_1,\ldots,n_p-1,1,0,\ldots,0}=\mathfrak S_{|w^{-1}(0)|,\ldots,|w^{-1}(k-1)|}$. 

(c) Démonstration semblable à celle de (b). 
\end{proof}

\begin{lem}\label{lem:bij:BIS}
$\underline{\mathrm{bij}}$ induit une bijection $\mathrm{bij}$ entre $\mathrm{Ens}_{n-1}^k\times[\![0,n]\!]$
et $\{x\in \mathrm{Ens}_{n}^k\times[\![0,n]\!]|\mathrm{invol}(x)\notin\mathrm{Ens}_{n}^k\times[\![0,n]\!]\}$. On a 
alors le diagramme commutatif 
\begin{equation}\label{diag:c:0412}
\xymatrix{\mathrm{Ens}_{n-1}^k \times[\![0,n]\!]\ar@{^{(}->}[d]\ar^{
\!\!\!\!\!\!\!\!\!\!\!\!\!\!\!\!\!\!\!\!\!\!\!\!\!\!\!\!\!\!\!\!\!\!\!\!\!\!\!\!\!\!\!\!\!\!\!\!\!\!\!\!
\mathrm{bij}}_{
\!\!\!\!\!\!\!\!\!\!\!\!\!\!\!\!\!\!\!\!\!\!\!\!\!\!\!\!\!\!\!\!\!\!\!\!\!\!\!\!\!\!\!\!\!\!\!\!\!\!\!\!
\sim}[rr]&&\{x\in \mathrm{Ens}_n^k\times[\![0,n]\!]|
\mathrm{invol}(x)\notin\mathrm{Ens}_n^k\times[\![0,n]\!]\}
\}\ar@{^{(}->}[d]\\ 
\mathbb Z^{n-1}\times\mathfrak S_{n-1} \times[\![0,n]\!]
\ar_{\underline{\mathrm{bij}}}[rr]&&
\mathbb Z^n\times\mathfrak S_n \times[\![0,n]\!]}
\end{equation}
\end{lem}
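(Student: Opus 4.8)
Le diagramme \eqref{diag:c:0412} exprime simplement que $\mathrm{bij}$ est la corestriction de $\underline{\mathrm{bij}}$ aux sous-ensembles indiqués, de sorte que sa commutativité sera automatique une fois établi que $\underline{\mathrm{bij}}$ induit bien une bijection. Le plan est donc de démontrer successivement : (1) que $\underline{\mathrm{bij}}$ envoie $\mathrm{Ens}_{n-1}^k\times[\![0,n]\!]$ dans $\mathrm{Ens}_n^k\times[\![0,n]\!]$ ; (2) que son image est contenue dans $\{x\in\mathrm{Ens}_n^k\times[\![0,n]\!]\,|\,\mathrm{invol}(x)\notin\mathrm{Ens}_n^k\times[\![0,n]\!]\}$ ; (3) que $\underline{\mathrm{bij}}$ est injective ; (4) que sa corestriction à ce sous-ensemble est surjective. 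Chacune des étapes (1), (2) et (4) se scinde selon les trois cas $i\neq 0,n$, $i=0$ et $i=n$.

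Pour l'étape (1), la composante dans $\mathbb Z^n$ de l'image (soit $w\circ p_i$, $(0,w)$, ou $(w,k-1)$) est visiblement croissante à valeurs dans $[\![0,k-1]\!]$ dès que $w\in[\![0,k-1]\!]^{n-1}_{\leq}$, et le point délicat est de vérifier que $\tau^{(i)}$ (resp. $\tau^{(0)}$, $\tau^{(n)}$) appartient au groupe de battages $\mathfrak S_{|v^{-1}(0)|,\ldots,|v^{-1}(k-1)|}$ relatif à la nouvelle suite $v$. On utilise pour cela la relation $p_{\tau(i)}\circ\tau^{(i)}=\tau\circ p_i$ caractérisant $\tau^{(i)}$ : le passage de $w$ à $v=w\circ p_i$ dédouble exactement un des blocs de valeurs constantes de $w$, et comme $\tau(i)$ et $\tau(i)+1$ sont insérés consécutivement aux positions $i,i+1$, la croissance de $\tau$ sur chaque bloc de $w$ (conséquence de $(w,\tau)\in\mathrm{Ens}_{n-1}^k$) entraîne celle de $\tau^{(i)}$ sur chaque bloc de $v$. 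L'étape (2) est en revanche directe : dans le cas $i\neq 0,n$, l'image $(v,\sigma,j):=(w\circ p_i,\tau^{(i)},\tau(i))$ vérifie $\sigma^{-1}(j)=i$ et $\sigma^{-1}(j+1)=i+1$ grâce aux égalités $\tau^{(i)}(i)=\tau(i)$ et $\tau^{(i)}(i+1)=\tau(i)+1$ de la définition de $\tau^{(i)}$, tandis que $v_i=v_{i+1}$ montre que $i$ n'appartient pas à l'ensemble des sommes partielles $\{|v^{-1}(0)|,\ldots,|v^{-1}(0)|+\cdots+|v^{-1}(k-1)|\}$ ; d'après le cas (a) du lemme précédent, ceci équivaut exactement à $\mathrm{invol}(v,\sigma,j)\notin\mathrm{Ens}_n^k\times[\![0,n]\!]$. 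Les cas $i=0$ et $i=n$ s'obtiennent de même via les cas (c) et (b) de ce lemme, à partir de $\tau^{(0)}(1)=1$, $v_1=0$, respectivement $\tau^{(n)}(n)=n$, $v_n=k-1$.

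Pour l'étape (3), la troisième composante de l'image vaut $\tau(i)\in[\![1,n-1]\!]$, $0$ ou $n$ selon le cas ; les trois cas étant ainsi distingués par la valeur de $j$, il suffit d'établir l'injectivité à l'intérieur de chacun, ce qui résulte de ce que l'on reconstruit $i=\sigma^{-1}(j)$, puis $\tau$ à partir de $\tau^{(i)}=\sigma$ (unicité dans le lemme-définition) et $w$ à partir de $w\circ p_i=v$ (surjectivité de $p_i$). Enfin, pour l'étape (4), partant de $(v,\sigma,j)$ dans le sous-ensemble cible avec $j\neq 0,n$, je poserais $i:=\sigma^{-1}(j)$, $w:=v\circ\mathrm{st}_i$ et $\tau$ la permutation obtenue en contractant $\sigma$ aux positions $i,i+1$ et aux valeurs $j,j+1$ : ceci est licite car le lemme précédent fournit $\sigma^{-1}(j+1)=\sigma^{-1}(j)+1$ et, par suite, $v_i=v_{i+1}$, garantissant $w\in[\![0,k-1]\!]^{n-1}_{\leq}$ et $\tau\in\mathfrak S_{n-1}$ ; un calcul symétrique de celui de (1) donne alors $(w,\tau)\in\mathrm{Ens}_{n-1}^k$, et par construction $\underline{\mathrm{bij}}(w,\tau,i)=(v,\sigma,j)$, les cas $j=0$ et $j=n$ se traitant en retirant la première, resp. la dernière, entrée de $v$. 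Le principal obstacle réside dans le contrôle des conditions de battage aux étapes (1) et (4), essentiellement équivalentes, qui demandent de suivre précisément la transformation de la structure en blocs sous $w\mapsto w\circ p_i$ et l'action correspondante de $\tau^{(i)}$.
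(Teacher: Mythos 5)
Votre démonstration est correcte et suit essentiellement la même démarche que celle de l'article : réduction au moyen du lemme de caractérisation des triplets $(v,\sigma,i)$ tels que $\mathrm{invol}(v,\sigma,i)\notin\mathrm{Ens}_n^k\times[\![0,n]\!]$, scission en trois cas selon l'indice ($i\neq 0,n$, $i=0$, $i=n$), contrôle des conditions de battage via la structure en blocs sous $w\mapsto w\circ p_i$, et reconstruction réciproque par $(v,\sigma,j)\mapsto(v\circ\mathrm{st}_{\sigma^{-1}(j)},\,p_j\circ\sigma\circ\mathrm{st}_{\sigma^{-1}(j)},\,\sigma^{-1}(j))$. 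La seule différence est d'ordre rédactionnel : vous établissez l'injectivité et la surjectivité globalement, là où l'article exhibe des inverses explicites pièce par pièce, ce qui revient au même.
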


\begin{proof}
D'après le lemme 1.29, il suffit de montrer que 
$\underline{\mathrm{bij}}$ induit une bijection entre $\mathrm{Ens}_{n-1}^k\times[\![0,n]\!]$
et 
$$
E:=E_n\sqcup E_0\sqcup E_{[\![1,n-1]\!]}
$$
où 
$$
E_n:=\{(v,\sigma,n)|(v,\sigma)\in \mathrm{Ens}_{n}^k\text{ et }\sigma(n)=n\text{ et }v(n)=k-1\},
$$
$$
E_0:=\{(v,\sigma,0)|(v,\sigma)\in \mathrm{Ens}_{n}^k\text{ et }\sigma(1)=1\text{ et }v(1)=0\},
$$
\begin{align*}
E_{[\![1,n-1]\!]}:= & \{(v,\sigma,i)\in \mathrm{Ens}_{n}^k\times[\![1,n-1]\!]|\sigma^{-1}(i)\notin
\{|v^{-1}(0)|,\ldots,|v^{-1}(0)|+\cdots+|v^{-1}(k-1)|\}
\\ & \text{ et }\sigma^{-1}(i+1)=\sigma^{-1}(i)+1\}. 
\end{align*}

Pour cela, on montre séparément que $\underline{\mathrm{bij}}$ induit une bijection entre 
(a) $\mathrm{Ens}_{n-1}^k\times\{n\}$ et $E_n$, (b) $\mathrm{Ens}_{n-1}^k\times\{0\}$ et $E_0$, 
et (c) $\mathrm{Ens}_{n-1}^k\times[\![1,n-1]\!]$ et $E_{[\![1,n-1]\!]}$. 

(a) Montrons que $\underline{\mathrm{bij}}$ envoie $\mathrm{Ens}_{n-1}^k\times\{n\}$ dans $E_n$. Soit $(w,\tau)\in\mathrm{Ens}_{n-1}^k$. 
On a $w\in[\![0,k-1]\!]_{\leq}^{n-1}$, ce qui implique $w^{(n)}=(w,k-1) \in [\![0,k-1]\!]_{\leq}^n$. Posons $v:=w^{(n)}$, alors 
pour tout $i \in [\![0,k-1]\!]$, on a $v^{-1}(i)=w^{-1}(i)$ si $i \neq k-1$ et $v^{-1}(k-1)=w^{-1}(k-1) \sqcup \{n\}$. 
Si $i \neq k-1$, la restriction de $\tau^{(n)}$ à $v^{-1}(i)$ coïncide avec la restriction de $\tau$ à $w^{-1}(i)$, qui est croissante
car $\tau\in\mathfrak S_{|w^{-1}(0)|,\ldots,|w^{-1}(k-1)|}$. La restriction de $\tau^{(n)}$ à $v^{-1}(k-1)$ est l'union disjointe de 
la restriction de $\tau$ à $w^{-1}(k-1)$, qui est croissante et à valeurs dans $[\![1,n-1]\!]$ et de l'application $n\mapsto n$. Comme 
on a $w^{-1}(k-1)<n$, cette union disjointe est croissante. Donc $\tau^{(n)}\in \mathfrak S_{|v^{-1}(0)|,\ldots,|v^{-1}(k-1)|}$, ce
qui implique $(w^{(n)},\tau^{(n)}) \in \mathrm{Ens}_n^k$. De plus, on a $w^{(n)}(n)=k-1$ et $\tau^{(n)}(n)=n$, donc 
$\underline{\mathrm{bij}}(w,\tau,n)=(w^{(n)},\tau^{(n)},n) \in E_n$. 

Donc $\underline{\mathrm{bij}}(\mathrm{Ens}_{n-1}^k\times\{n\})\subset E_n$, notons 
$$
\mathrm{bij}_n : \mathrm{Ens}_{n-1}^k\times\{n\}\to E_n
$$
l'application induite. Soit $\underline{\mathrm{invbij}}_n : E_n\to \mathbb Z^{n-1}\times\mathfrak S_{n-1}\times \{n\}$
l'application donnée par $(v,\sigma,n)\mapsto(v_{|[\![1,n-1]\!]},\sigma_{|[\![1,n-1]\!]},n)$ ; comme  $\sigma \in \mathfrak S_n$ et 
$\sigma(n)=n$, on a bien $\sigma_{|[\![1,n-1]\!]}\in \mathfrak S_{n-1}$. 

Montrons que $\underline{\mathrm{invbij}}_n$ envoie $E_n$ dans $\mathrm{Ens}_{n-1}^k\times\{n\}$. 
Soit $(v,\sigma,n)\in E_n$, et posons $w:=v_{|[\![1,n-1]\!]}$, $\tau:=\sigma_{|[\![1,n-1]\!]}$. 
Alors $v\in [\![0,k-1]\!]^n_{\leq}$ ce qui implique $w\in [\![0,k-1]\!]^n_{\leq}$. De plus, 
pour chaque $i\in [0,k-1]$, $w^{-1}(i)$ est contenu dans $w^{-1}(i)$ (on a même égalité si $i\neq k-1$).
La restriction de $\tau$ à $w^{-1}(i)$ coïncide avec la restriction de $\sigma$ au même ensemble, 
qui est croissante par la croissance de $\sigma$ en restriction à $v^{-1}(i)$, qui contient $w^{-1}(i)$. 
Donc $(w,\tau)\in \mathrm{Ens}_{n-1}^k$. 

Donc $\underline{\mathrm{invbij}}_n$ envoie $E_n$ dans $\mathrm{Ens}_{n-1}^k\times\{n\}$. Notons 
$$
\mathrm{invbij}_n : E_n\to\mathrm{Ens}_{n-1}^k\times\{n\}
$$
l'application induite. On vérifie que les compositions $\mathrm{invbij}_n\circ \mathrm{bij}_n$ et $\mathrm{bij}_n\circ \mathrm{invbij}_n$
sont l'identité. On en déduit que $\mathrm{bij}_n$ est une bijection. 

(b) Démonstration semblable à celle de (a).  

(c) Montrons que $\underline{\mathrm{bij}}$ envoie $\mathrm{Ens}_{n-1}^k \times[\![1,n-1]\!]$ dans $E_{[\![1,n-1]\!]}$. 
Soit $(w,\tau,i) \in \mathrm{Ens}_{n-1}^k \times [\![1,n-1]\!]$ et posons $v:=w \circ p_{i}$, $\sigma:=\tau^{(i)}$. 
Comme $p_{i} : [\![1,n]\!]\to[\![1,n-1]\!]$ est croissante et $w \in [\![0,k-1]\!]^{n-1}_{\leq}$, on a 
$v \in [\![0,k-1]\!]^n_{\leq}$. On a alors pour $j \in[\![0,k-1]\!]$, $v^{-1}(j)=w^{-1}(j)$ si $j<w(i)$, l'égalité
$v^{-1}(j)=w^{-1}(j)+1$ si $j>w(i)$, et $v^{-1}(w(i))=w^{-1}(w(i)) \cup  (w^{-1}(w(i))+1)$. Si $j<w(i)$, la restriction de 
$\sigma$ à $v^{-1}(j)=w^{-1}(j)$ est égale à la composition $\mathrm{st}_{\tau(i)} \circ \tau_{|w^{-1}(j)}$, qui est croissante 
par croissance de chacun des termes. Si $j>w(i)$, 
la restriction de $\sigma$ à $v^{-1}(j)=w^{-1}(j)+1$ est égale à la composition  $\mathrm{st}_{\tau(i)-1} \circ \tau_{|w^{-1}(j)} \circ 
(x\mapsto x-1)$, qui est croissante par croissance de chacun des termes. Notons $\alpha,\beta$ les éléments minimaux et maximaux de 
$w^{-1}(w(i))$ ; comme cet ensemble est un intervalle, on a $w^{-1}(w(i))=[\![\alpha,\beta]\!] \ni i$. Alors 
$v^{-1}(w(i))=[\![\alpha,\beta+1]\!]$. La restriction de $\sigma$ à $v^{-1}(w(i))=[\![\alpha,\beta+1]\!]$ est donnée par $x\mapsto \tau(x)$ 
si $x\in[\![\alpha,i]\!]$ et $x\mapsto\tau(x-1)+1$ si $x\in[\![i+1,\beta+1]\!]$ ; les restrictions de $\tau$ à 
$[\![\alpha,i]\!]$ et $[\![i,\beta]\!]$ sont croissantes, ce qui implique que les restrictions de $\sigma$ à 
$[\![\alpha,i]\!]$ et $[\![i+1,\beta+1]\!]$ le sont aussi.  On a $\sigma(i+1)=\tau(i)+1=\sigma(i)+1$ 
ce qui, combiné à la croissance de $\sigma$ sur $[\![\alpha,i]\!]$ et $[\![i+1,\beta+1]\!]$  
implique la croissance de $\sigma$ sur $[\![\alpha,\beta+1]\!]=v^{-1}(w(i))$. 
Donc $\sigma \in\mathfrak S_{|v^{-1}(0)|,\ldots,|v^{-1}(k-1)|}$ donc $(v,\sigma)=(w\circ p_i,\tau^{(i)}) \in \mathrm{Ens}_n^k$. 

On sait que $\tau^{(i)}(i)=\tau(i)$ et $\tau^{(i)}(i+1)=\tau(i)+1$. Alors $(\tau^{(i)})^{-1}(\tau(i))=i$. L'ensemble 
$\{|v^{-1}(0)|,\ldots,|v^{-1}(0)|+\cdots+|v^{-1}(k-1)|\}$ est celui des maxima des intervalles de la partition 
$[\![1,n]\!]=v^{-1}(0)\sqcup \ldots\sqcup v^{-1}(k-1)$. Celui de ces intervalles auquel appartient $i$ est 
$v^{-1}(v(i))=[\![\alpha,\beta+1]\!]$, on a donc pour $j\neq v(i)$, $i\neq\mathrm{max}(v(j))$. Comme $i\leq \beta$, on a 
$i\neq \beta+1=\mathrm{max}(v^{-1}(v(i)))$. On a donc pour tout $j\in[\![0,k-1]\!]$, $i\neq\mathrm{max}(v^{-1}(j))$, 
donc $(\tau^{(i)})^{-1}(\tau(i))=i\notin\{|v^{-1}(0)|,\ldots,|v^{-1}(0)|+\cdots+|v^{-1}(k-1)|\}$. De plus, on a 
$(\tau^{(i)})^{-1}(\tau(i)+1)=i+1=(\tau^{(i)})^{-1}(\tau(i))+1$. On en déduit $(w\circ p_i,\tau^{(i)},\tau(i))\in E_{[\![1,n-1]\!]}$. 

Donc $\underline{\mathrm{bij}}(\mathrm{Ens}_{n-1}^k \times[\![1,n-1]\!]) \subset E_{[\![1,n-1]\!]}$. Notons 
$$
\mathrm{bij}_{[\![1,n-1]\!]} : \mathrm{Ens}_{n-1}^k \times[\![1,n-1]\!]\to E_{[\![1,n-1]\!]}
$$
l'application induite.

Pour $(v,\sigma,j) \in E_{[\![1,n-1]\!]}$ posons 
\begin{equation}\label{***:1711:BIS}
    \underline{\mathrm{invbij}}_{[\![1,n-1]\!]}(v,\sigma,j):=(v \circ \mathrm{st}_{\sigma^{-1}(j)},
    p_j \circ \sigma \circ \mathrm{st}_{\sigma^{-1}(j)},\sigma^{-1}(j))=(w,\tau,i). 
\end{equation}
Alors $\mathrm{st}_{\sigma^{-1}(j)}$ est une application $[\![1,n-1]\!]\to[\![1,n]\!]$, donc $w \in \mathbb Z^{n-1}$. 
On a aussi $i \in [\![1,n]\!]$. Enfin $\tau=p_j \circ \sigma \circ \mathrm{st}_{\sigma^{-1}(j)}$ est une application 
de $[\![1,n-1]\!]$ dans lui-même. Montrons son injectivité. Supposons $x \neq x' \in [\![1,n-1]\!]$ et $\tau(x)=\tau'(x)$. 
Par injectivité de $\sigma$ et $\mathrm{st}_{\sigma^{-1}(j)}$ on a 
$\sigma \circ \mathrm{st}_{\sigma^{-1}(j)}(x) \neq \sigma \circ \mathrm{st}_{\sigma^{-1}(j)}(x')$ donc on a 
(quitte  à échanger $x$ et $x'$) $\sigma \circ \mathrm{st}_{\sigma^{-1}(j)}(x)=j$ 
et $\sigma \circ \mathrm{st}_{\sigma^{-1}(j)}(x')=j+1$. Donc $\mathrm{st}_{\sigma^{-1}(j)}(x)=\sigma^{-1}(j)$ et 
$\mathrm{st}_{\sigma^{-1}(j)}(x)=\sigma^{-1}(j)+1$. Or $\sigma^{-1}(j)+1$ n'est pas dans l'image de $\mathrm{st}_{\sigma^{-1}(j)}$, 
contradiction. On a donc l'injectivité de $\tau$, donc $\tau\in\mathfrak S_{n-1}$. Ceci montre que \eqref{***:1711:BIS} définit une application 
$$
\underline{\mathrm{invbij}}_{[\![1,n-1]\!]} :  E_{[\![1,n-1]\!]}\to\mathbb Z^n \times\mathfrak S_n \times [\![1,n-1]\!]. 
$$
Montrons que $\underline{\mathrm{invbij}}_{[\![1,n-1]\!]}$ envoie $E_{[\![1,n-1]\!]}$ dans 
$\mathrm{Ens}_n^k \times[\![1,n-1]\!]$. 
Dans \eqref{***:1711:BIS}, on a $w=v \circ \mathrm{st}_{\sigma^{-1}(j)}$. On a $v \in [\![0,k-1]\!]^{n-1}_{\leq}$ 
et $\mathrm{st}_{\sigma^{-1}(j)}$ est une application croissante $[\![1,n-1]\!]\to[\![1,n]\!]$, ce qui implique 
$w \in [\![1,k-1]\!]^n_{\leq}$. 

Montrons que $\tau\in\mathfrak S_{|w^{-1}(0)|,\ldots,|w^{-1}(k-1)|}$. Ceci revient à montrer que la restriction de $\tau$ à 
$w^{-1}(\ell)$ est croissante pour tout $\ell\in[\![0,k-1]\!]$. Rappelons que $v^{-1}(v(\sigma^{-1}(i)))$ est 
un intervalle de $[\![1,n]\!]$ dont $\sigma^{-1}(i)$ n'est pas le plus grand élément. Notons $\alpha,\beta$ les minimum et maximum 
de cet intervalle, alors $v^{-1}(v(\sigma^{-1}(i))) =[\![\alpha,\beta]\!]$ avec $\alpha \leq \sigma^{-1}(i)<\beta$. 

Si $\ell<v(\sigma^{-1}(i))$, alors $w^{-1}(\ell)=v^{-1}(\ell)$, et la restriction de $\tau$ à $w^{-1}(\ell)$ coïncide avec la restriction 
de $p_{\sigma^{-1}(j)}\circ\sigma$ au même intervalle. Comme $\sigma_{|v^{-1}(\ell)}$ et $p_{\sigma^{-1}(j)}$ sont croissantes, 
$\tau_{|w^{-1}(\ell)}$ est donc croissante. Si $\ell>v(\sigma^{-1}(i))$, alors $w^{-1}(\ell)=v^{-1}(\ell)-1$, et la restriction de 
$\tau$ à $w^{-1}(\ell)$ coïncide avec la restriction
de $p_{\sigma^{-1}(j)} \circ \sigma \circ (x\mapsto x+1)$ au même intervalle. Comme 
$(\sigma \circ (x\mapsto x+1))_{|v^{-1}(\ell)-1}$ et $p_{\sigma^{-1}(j)}$ sont croissantes, $\tau_{|w^{-1}(\ell)}$ est donc croissante. On a 
$w^{-1}(v(\sigma^{-1}(i)))=[\![\alpha,\beta-1]\!]$. La restriction de $\tau$ à $[\![\alpha,\sigma^{-1}(i)]\!]$ coïncide avec celle de 
$\sigma$ à cet intervalle qui est croissante, cet intervalle étant contenu dans $v^{-1}(v(\sigma^{-1}(i)))$, donc $\tau_{|[\![\alpha,\sigma^{-1}(i)]\!]}$ est croissante. La restriction de $\tau$ à 
$[\![\sigma^{-1}(i),\beta-1]\!]$ coïncide avec celle de $\sigma \circ (x\mapsto x+1)$ à cet intervalle qui est croissante
par croissance de $\sigma$ sur $v^{-1}(v(\sigma^{-1}(i)))$, donc 
$\tau_{|[\![\sigma^{-1}(i),\beta-1]\!]}$ est croissante. Il suit que la restriction de $\tau$ à 
$[\![\alpha,\beta-1]\!]=w^{-1}(v(\sigma^{-1}(i)))$ est croissante. Tout ceci implique 
$\tau \in \mathfrak S_{|w^{-1}(0)|,\ldots,|w^{-1}(k-1)|}$. 

Comme $\tau \in \mathfrak S_{|w^{-1}(0)|,\ldots,|w^{-1}(k-1)|}$ et $w \in [\![1,k-1]\!]^n_{\leq}$, on a
$(w,\tau) \in \mathrm{Ens}_{n-1}^k$. 

Donc 
$\underline{\mathrm{invbij}}_{[\![1,n-1]\!]}(E_{[\![1,n-1]\!]}) \subset \mathrm{Ens}_n^k \times[\![1,n-1]\!]$.
Notons 
$$
\mathrm{invbij}_{[\![1,n-1]\!]} : E_{[\![1,n-1]\!]}\to \mathrm{Ens}_n^k \times[\![1,n-1]\!]
$$
l'application induite. On vérifie que les compositions $\mathrm{invbij}_n\circ \mathrm{bij}_n$ et $\mathrm{bij}_n\circ \mathrm{invbij}_n$
sont l'identité. On en déduit que $\mathrm{bij}_{[\![1,n-1]\!]}$ est une bijection. 
\end{proof}

\subsubsection{Diagramme commutatif impliquant $\mathrm{Aff}(\Delta^{n-1},\Delta^n)\times\{\pm1\}$}

\begin{defn}\label{def:A:2210:BIS}
Pour $n,m\geq 0$, on note $\mathrm{Aff}(\Delta^n,\Delta^m)$ l'ensemble des applications $\phi : \Delta^n\to\Delta^m$ telles 
qu'il existe une application affine $\underline \phi : \mathbb R^n\to\mathbb R^m$ telle que 
$\underline \phi\circ\mathrm{can}_n=\mathrm{can}_m\circ \phi$. 
(avec $\mathrm{can}_n$, $\mathrm{can}_m$ comme en déf. \ref{def:2:1:2311})\end{defn}

\begin{lem}\label{lem:A:2210:BIS}
Soit $n,m\geq 0$. 

(a) Si $\phi\in\mathrm{Aff}(\Delta^n,\Delta^m)$, une application affine $\underline \phi:\mathbb R^n\to\mathbb R^m$ comme en 
déf. \ref{def:A:2210:BIS} est unique. 

(b) L'application $\mathrm{Aff}(\Delta^n,\Delta^m)\to\mathbf{Aff}(\mathbb R^n,\mathbb R^m)$, $\phi\mapsto\underline \phi$ est injective. 
\end{lem}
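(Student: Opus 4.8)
Le plan est de ramener les deux énoncés au fait élémentaire qu'une application affine de $\mathbb R^n$ est déterminée par ses valeurs sur une partie affinement génératrice, combiné à l'observation que $\Delta^n$ contient une telle partie.

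Pour (a), on commencera par remarquer que les $n+1$ points $E_0^n,\ldots,E_n^n$ introduits avant la définition \ref{def:1709:2311:BIS} appartiennent tous à $\Delta^n$ : en effet $E_i^n$ a ses $i$ dernières coordonnées égales à $1$ et les autres nulles, et vérifie donc $0\leq t_1\leq\cdots\leq t_n\leq 1$. Ces points sont affinement indépendants, car les différences $E_i^n-E_0^n=E_i^n$ pour $i\in[\![1,n]\!]$ forment une famille triangulaire, donc inversible ; ils engendrent ainsi affinement $\mathbb R^n$. Comme $E_i^n\in\Delta^n$ et que $\mathrm{can}_n$ est l'inclusion, la relation $\underline\phi\circ\mathrm{can}_n=\mathrm{can}_m\circ\phi$ impose $\underline\phi(E_i^n)=\mathrm{can}_m(\phi(E_i^n))$ pour tout $i$. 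Or deux applications affines $\mathbb R^n\to\mathbb R^m$ coïncidant sur la famille affinement génératrice $(E_i^n)_{i=0}^n$ sont égales, comme on le voit en écrivant un point arbitraire de $\mathbb R^n$ en coordonnées barycentriques relativement à cette famille ; d'où l'unicité de $\underline\phi$.

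Pour (b), on utilisera l'injectivité de $\mathrm{can}_m$, qui est une inclusion canonique. Soient $\phi,\psi\in\mathrm{Aff}(\Delta^n,\Delta^m)$ avec $\underline\phi=\underline\psi$. On en déduit $\underline\phi\circ\mathrm{can}_n=\underline\psi\circ\mathrm{can}_n$, c'est-à-dire $\mathrm{can}_m\circ\phi=\mathrm{can}_m\circ\psi$ d'après les relations de la définition \ref{def:A:2210:BIS} ; l'injectivité de $\mathrm{can}_m$ fournit alors $\phi=\psi$, ce qui établit l'injectivité de $\phi\mapsto\underline\phi$.

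La difficulté étant minime, aucun obstacle sérieux n'est à prévoir : le seul point demandant vérification est l'indépendance affine de $E_0^n,\ldots,E_n^n$, immédiate d'après leur forme triangulaire, le reste reposant sur la rigidité standard des applications affines et sur l'injectivité des inclusions $\mathrm{can}_\bullet$.
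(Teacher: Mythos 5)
Votre démonstration est correcte et suit essentiellement la même voie que celle de l'article : le point (a) repose sur le fait que $(E_0^n,\ldots,E_n^n)$ est une base affine de $\mathbb R^n$ contenue dans $\Delta^n$ (vous explicitez simplement l'indépendance affine, que l'article se contente d'affirmer), et le point (b) sur la détermination de $\phi$ par $\underline\phi$ via l'injectivité des inclusions canoniques. Rien à redire.
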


\begin{proof}
(a) est une conséquence de ce que $\Delta^n$ contient une base affine de $\mathbb R^n$, à savoir $(E_0^n,\ldots,E_n^n)$. 
(b) provient de ce qu'une application affine est uniquement déterminée par l'image d'une base affine. 
\end{proof}

On déduit du lemme \ref{lem:A:2210:BIS} une famille d'inclusions 
\begin{equation}\label{incl:Delta:R:0412}
\mathrm{Aff}(\Delta^n,\Delta^m) \subset \mathbf{Aff}(\mathbb R^n,\mathbb R^m)
\end{equation}
pour $n,m\geq 0$, compatible avec les compositions d'applications. 

\begin{lemdef}
Dans la situation de la déf. \ref{def:1709:2311:BIS}, si $P_0,\ldots,P_n\in\Delta^m$, l'application 
$[\underline{P_0,\ldots,P_n}]$ envoie $\Delta^n$ dans $\Delta^m$, donc appartient à l'image de 
l'injection $\mathrm{Aff}(\Delta^n,\Delta^m)\hookrightarrow\mathbf{Aff}(\mathbb R^n,\mathbb R^m)$. On note 
$[P_0,\ldots,P_n]$ l'élément de $\mathrm{Aff}(\Delta^n,\Delta^m)$ correspondant. 
\end{lemdef}

\begin{proof}
Suit de la convexité de $\Delta^n$ et $\Delta^m$, et de ce que $(E_0^n,\ldots,E_n^n)$ forme une base du convexe 
$\Delta^n$. 
\end{proof}

\begin{lemdef}\label{lem:def:0412}
Si $(v,\sigma)\in\mathrm{Ens}_n^k$, alors $\underline c_k(v,\sigma)$ appartient à l'image de l'injection 
$\mathrm{Aff}(\Delta^n,\Delta^n)\hookrightarrow\mathbf{Aff}(\mathbb R^n,\mathbb R^n)$. On note 
$c_k(v,\sigma)\in\mathrm{Aff}(\Delta^n,\Delta^n)$ la préimage de $\underline c_k(v,\sigma)$. 
\end{lemdef}

\begin{proof} L'application associée à $(v,\sigma)\in\mathrm{Ens}_n^k$ est donnée par 
$$
(x_1,\ldots,x_n)\mapsto ((v_1+x_{\sigma(1)})/k,\ldots,(v_n+x_{\sigma(n)})/k)=:(y_1,\ldots,y_n) ; 
$$
c'est une endo-application continue de $\mathbb R^n$. 
Supposons $(x_1,\ldots,x_n)\in\Delta^n$. Comme $v_1\geq0$ et $x_{\sigma(1)}\geq0$, on a $y_1\geq0$ ; de même, $v_n\leq k-1$ et 
$x_{\sigma(n)}\leq1$ implique $y_n\leq 1$. Enfin pour $i\in[\![1,n-1]\!]$, la déf. \ref{def:ens:n:k:1210:BIS} implique que 
$v_i=v_{i+1}$ ou $v_i<v_{i+1}$. Dans le premier cas ($v_i=v_{i+1}$), \eqref{TOTO:2311:BIS} % la déf. \ref{def:ens:n:k:1210:BIS} 
implique $\sigma(i)<\sigma(i+1)$, ce qui implique par croissance de $i\mapsto x_i$ l'inégalité $x_{\sigma(i)}\leq x_{\sigma(i+1)}$, ce qui 
combiné avec $v_i=v_{i+1}$ implique l'inégalité dans $y_i=(v_i+x_{\sigma(i)})/k\leq (v_{i+1}+x_{\sigma(i+1)})/k=y_{i+1}$, où les égalités
extrêmes proviennent des définitions. Dans le second cas ($v_i<v_{i+1}$), on a  $y_i=(v_i+x_{\sigma(i)})/k\leq(v_i+1)/k\leq v_{i+1}/k 
\leq (v_{i+1}+x_{\sigma(i+1)})/k=y_{i+1}$, où la première et dernière égalité proviennent des définitions, où la première et dernière
inégalité proviennent respectivement de $x_{\sigma(i)}\geq 0$ et $x_{\sigma(i+1)}\leq 1$, et où l'inégalité centrale provient de 
$v_i<v_{i+1}$ et du caractère entier des composantes de $v$. On a donc dans tous les cas $y_i\leq y_{i+1}$, ce qui achève de montrer que 
$(y_1,\ldots,y_n)\in\Delta^n$. Ceci montre l'énoncé. 
\end{proof}

On a $c_k(v,\sigma)=[(1/k)(v+\sigma^*E^n_0),\ldots,(1/k)(v+\sigma^*E^n_n)]$ pour $(v,\sigma)\in\mathrm{Ens}_n^k$. 

\begin{defn}
On note pour $i\in[\![0,n]\!]$, 
$$
\partial_i^n:=[E_0^n,\ldots,E_{n-i-1}^n,E_{n-i+1}^n,\ldots,E_n^n]\in\mathrm{Aff}(\Delta^{n-1},\Delta^n). 
$$
\end{defn}

\begin{lem}\label{lem:lien:2210}
$\underline\partial_i^n$ est l'image de $\partial_i^n$  par l'application 
$\mathrm{Aff}(\Delta^{n-1},\Delta^n)\to\mathbf{Aff}(\mathbb R^{n-1},\mathbb R^n)$. 
\end{lem}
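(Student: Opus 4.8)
\emph{Le plan est de dérouler les deux notations $[\underline{\cdots}]$ et $[\cdots]$ reliées par l'inclusion \eqref{incl:Delta:R:0412}.} L'énoncé affirme l'égalité de $\underline\partial_i^n\in\mathbf{Aff}(\mathbb R^{n-1},\mathbb R^n)$ avec l'image de $\partial_i^n\in\mathrm{Aff}(\Delta^{n-1},\Delta^n)$ par cette inclusion. Or $\partial_i^n$ est par définition l'élément $[E_0^n,\ldots,E_{n-i-1}^n,E_{n-i+1}^n,\ldots,E_n^n]$ de $\mathrm{Aff}(\Delta^{n-1},\Delta^n)$, notation qui n'a de sens, d'après le lemme-définition introduisant les symboles $[P_0,\ldots,P_n]$, que lorsque les points listés appartiennent tous à $\Delta^n$ ; sous cette hypothèse, ce même lemme-définition pose $\partial_i^n$ comme \emph{étant} la préimage de $[\underline{E_0^n,\ldots,E_{n-i-1}^n,E_{n-i+1}^n,\ldots,E_n^n}]=\underline\partial_i^n$ par l'inclusion. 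La seule chose à vérifier est donc l'appartenance à $\Delta^n$ des sommets en jeu.

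Premièrement, je vérifierais que $E_j^n\in\Delta^n$ pour tout $j\in[\![0,n]\!]$. La définition $E_j^n=e_n^n+e_{n-1}^n+\cdots+e_{n-j+1}^n$ donne en coordonnées le point dont les $n-j$ premières composantes valent $0$ et les $j$ dernières valent $1$, soit $(0,\ldots,0,1,\ldots,1)$. Ce $n$-uplet satisfait manifestement $0\leq t_1\leq\cdots\leq t_n\leq 1$, donc appartient à $\Delta^n$. En particulier les $n$ points $E_0^n,\ldots,E_{n-i-1}^n,E_{n-i+1}^n,\ldots,E_n^n$ obtenus en omettant $E_{n-i}^n$ sont tous dans $\Delta^n$.

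Deuxièmement, j'appliquerais le lemme-définition de la notation $[P_0,\ldots,P_n]$, avec le couple $(n,m)$ remplacé par $(n-1,n)$ et avec la liste de $n$ points $(E_0^n,\ldots,E_{n-i-1}^n,E_{n-i+1}^n,\ldots,E_n^n)$, lesquels paramètrent une application affine $\mathbb R^{n-1}\to\mathbb R^n$ via $E_j^{n-1}\mapsto P_j$. Ce lemme-définition assure d'une part que $\underline\partial_i^n$ envoie $\Delta^{n-1}$ dans $\Delta^n$, et d'autre part que $\partial_i^n$ est la préimage de $\underline\partial_i^n$ par l'inclusion \eqref{incl:Delta:R:0412} ; l'image de $\partial_i^n$ par cette inclusion est donc $\underline\partial_i^n$, ce qui est l'énoncé recherché.

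Je ne m'attends à aucun obstacle véritable : la démonstration est purement tautologique une fois la compatibilité des deux crochets dépliée, et le seul point demandant une vérification, à savoir $E_j^n\in\Delta^n$, est immédiat. Le seul soin à prendre est de nature comptable, en s'assurant que la liste à $n$ points — et non $n+1$ — paramètre bien une application $\mathbb R^{n-1}\to\mathbb R^n$, ce qui fixe le décalage d'indices dans l'application du lemme-définition.
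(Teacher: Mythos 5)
Votre démonstration est correcte et suit essentiellement la même voie que l'article, qui se contente de déclarer le résultat « immédiat » : il s'agit bien du dépliage tautologique du lemme-définition introduisant $[P_0,\ldots,P_n]$ comme préimage de $[\underline{P_0,\ldots,P_n}]$, la seule vérification étant $E_j^n=(0,\ldots,0,1,\ldots,1)\in\Delta^n$, que vous effectuez correctement.
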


\begin{proof}
Immédiat. 
\end{proof}

\begin{defn}\label{titi:0512}
On note 
$$
f : \mathrm{Ens}_n^k \times[\![0,n]\!]\to \mathrm{Aff}(\Delta^{n-1},\Delta^n),\quad 
\tilde f : \mathrm{Ens}_{n-1}^k \times[\![0,n]\!]\to \mathrm{Aff}(\Delta^{n-1},\Delta^n)
$$
les applications données par 
$$
f(v,\sigma,i):=c_k(v,\sigma)\circ\partial_i^n,\quad 
\tilde f(w,\tau,i):=\partial_i^n\circ c_k(w,\tau).  
$$
\end{defn}

\begin{lem}\label{lem:2:33:0512}
Les diagrammes suivants commutent
$$
\xymatrix
{\mathrm{Ens}_n^k \times[\![0,n]\!]\ar^f[r]\ar@{^{(}->}[d]&\mathrm{Aff}(\Delta^{n-1},\Delta^n)\ar@{^{(}->}[d]\\
\mathbb Z^n\times\mathfrak S_n\times\times[\![0,n]\!]\ar_{\underline f}[r]&
\mathbf{Aff}(\mathbb R^{n-1},\mathbb R^n)}
\quad
\xymatrix
{\mathrm{Ens}_{n-1}^k \times[\![0,n]\!]\ar^{\tilde f}[r]\ar@{^{(}->}[d]&\mathrm{Aff}(\Delta^{n-1},\Delta^n)\ar@{^{(}->}[d]\\
\mathbb Z^{n-1}\times\mathfrak S_{n-1}\times\times[\![0,n]\!]\ar_{\underline{\tilde f}}[r]&
\mathbf{Aff}(\mathbb R^{n-1},\mathbb R^n)}
$$
\end{lem}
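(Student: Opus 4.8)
The plan is to deduce both commutativities from a single structural fact already recorded above: the family of inclusions \eqref{incl:Delta:R:0412} of $\mathrm{Aff}(\Delta^\bullet,\Delta^\bullet)$ into $\mathbf{Aff}(\mathbb R^\bullet,\mathbb R^\bullet)$ is compatible with composition. Concretely, if $\phi\in\mathrm{Aff}(\Delta^n,\Delta^m)$ and $\psi\in\mathrm{Aff}(\Delta^m,\Delta^p)$, then $\psi\circ\phi\in\mathrm{Aff}(\Delta^n,\Delta^p)$ and its image under \eqref{incl:Delta:R:0412} is the composite $\underline\psi\circ\underline\phi$ of the images of $\psi$ and $\phi$. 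Granting this, each square reduces to identifying, factor by factor, the images under \eqref{incl:Delta:R:0412} of the two building blocks $c_k(-)$ and $\partial_i^n$ with their underlined counterparts; the two identifications needed are exactly lem.-déf.~\ref{lem:def:0412} and lemme~\ref{lem:lien:2210}.

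For the left-hand diagram I would fix $(v,\sigma,i)\in\mathrm{Ens}_n^k\times[\![0,n]\!]$. By déf.~\ref{titi:0512} one has $f(v,\sigma,i)=c_k(v,\sigma)\circ\partial_i^n$, a composite of $\partial_i^n\in\mathrm{Aff}(\Delta^{n-1},\Delta^n)$ followed by $c_k(v,\sigma)\in\mathrm{Aff}(\Delta^n,\Delta^n)$, the latter existing precisely because $(v,\sigma)\in\mathrm{Ens}_n^k$, by lem.-déf.~\ref{lem:def:0412}. Applying compatibility with composition, the image of $f(v,\sigma,i)$ under \eqref{incl:Delta:R:0412} equals $\underline c_k(v,\sigma)\circ\underline\partial_i^n$, where I use lem.-déf.~\ref{lem:def:0412} to identify the image of $c_k(v,\sigma)$ with $\underline c_k(v,\sigma)$ and lemme~\ref{lem:lien:2210} to identify the image of $\partial_i^n$ with $\underline\partial_i^n$. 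By déf.~\ref{def:2:9:0512} this composite is exactly $\underline f(v,\sigma,i)$, which is the commutativity of the left square on the given element.

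For the right-hand diagram the argument is symmetric, with the two factors composed in the opposite order. Fixing $(w,\tau,i)\in\mathrm{Ens}_{n-1}^k\times[\![0,n]\!]$, déf.~\ref{titi:0512} gives $\tilde f(w,\tau,i)=\partial_i^n\circ c_k(w,\tau)$; here $(w,\tau)\in\mathrm{Ens}_{n-1}^k$, so lem.-déf.~\ref{lem:def:0412} applied in dimension $n-1$ guarantees $c_k(w,\tau)\in\mathrm{Aff}(\Delta^{n-1},\Delta^{n-1})$ and identifies its image with $\underline c_k(w,\tau)$. Compatibility with composition together with lemme~\ref{lem:lien:2210} then yields that the image of $\tilde f(w,\tau,i)$ under \eqref{incl:Delta:R:0412} is $\underline\partial_i^n\circ\underline c_k(w,\tau)$, which is $\underline{\tilde f}(w,\tau,i)$ by déf.~\ref{def:2:17:0512}.

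There is no genuine obstacle here: the statement is a bookkeeping consequence of the functoriality of the inclusions \eqref{incl:Delta:R:0412} combined with the two identifications already established. The only point demanding care is that the named preimages $c_k(v,\sigma)$ and $\partial_i^n$ exist as elements of the relevant $\mathrm{Aff}(\Delta^\bullet,\Delta^\bullet)$, which is precisely what the hypotheses $(v,\sigma)\in\mathrm{Ens}_n^k$ and $(w,\tau)\in\mathrm{Ens}_{n-1}^k$ secure via lem.-déf.~\ref{lem:def:0412}; once these preimages are in hand, the equality of the two paths in each square is automatic.
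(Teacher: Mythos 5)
Votre démonstration est correcte et suit essentiellement la même voie que celle du papier : elle repose sur les trois mêmes ingrédients, à savoir la compatibilité des inclusions \eqref{incl:Delta:R:0412} avec la composition, le lemme \ref{lem:lien:2210} pour identifier l'image de $\partial_i^n$ avec $\underline\partial_i^n$, et le lemme-déf. \ref{lem:def:0412} pour identifier l'image de $c_k(v,\sigma)$ (resp. $c_k(w,\tau)$) avec $\underline c_k(v,\sigma)$ (resp. $\underline c_k(w,\tau)$). La seule différence est de présentation : vous explicitez la vérification élément par élément là où le papier se contente de citer ces trois faits en une phrase.
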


\begin{proof}
Cela provient de la compatibilité des applications \eqref{incl:Delta:R:0412} avec la composition, du lemme \ref{lem:lien:2210},
et ce que pour $(v,\sigma)\in\mathrm{Ens}_n^k$ (resp. $(w,\tau)\in\mathrm{Ens}_{n-1}^k$), l'image de 
$c_k(v,\sigma)$ (resp. $c_k(w,\tau)$) sous $\mathrm{Aff}(\Delta^n,\Delta^n)\to\mathbf{Aff}(\mathbb R^n,\mathbb R^n)$
(resp. $\mathrm{Aff}(\Delta^{n-1},\Delta^{n-1})\to\mathbf{Aff}(\mathbb R^{n-1},\mathbb R^{n-1})$) est 
$\underline c_k(v,\sigma)$ (resp. $\underline c_k(w,\tau)$) (cf. lemme \ref{lem:def:0412}). 
\end{proof}

\begin{lem}\label{compats:f:underlinef:0412}
Le diagramme suivant commute 
\begin{equation}\label{diag:d:0412}
\xymatrix{
\mathrm{Ens}_{n-1}^k \times[\![0,n]\!]\ar^{\mathrm{bij}}_{\sim}[rr]\ar_{(\tilde f,\widetilde{\mathrm{sgn}})}[rd]&&
\{x\in \mathrm{Ens}_n^k\times[\![0,n]\!]|
\mathrm{invol}(x)\notin\mathrm{Ens}_n^k\times[\![0,n]\!]\}
\} \ar^{(f,\mathrm{sgn})}[ld]
\\ 
&\mathrm{Aff}(\Delta^{n-1},\Delta^n)\times\{\pm1\}&
}
\end{equation}
\end{lem}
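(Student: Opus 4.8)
Le plan est de ramener la commutativité de \eqref{diag:d:0412} à celle, déjà établie, de \eqref{diag:b:0412} (lemme \ref{lem:composition:BIS}), en passant par les inclusions des applications affines simpliciales dans les applications affines. Autrement dit, la commutativité de \eqref{diag:d:0412} équivaut aux deux identités $f\circ\mathrm{bij}=\tilde f$ et $\mathrm{sgn}\circ\mathrm{bij}=\widetilde{\mathrm{sgn}}$, et chacune se déduira de la version « ambiante » correspondante par restriction.

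D'abord, je traiterais la composante des signes. Comme $\mathrm{sgn}$ et $\widetilde{\mathrm{sgn}}$ sont données sur $\mathrm{Ens}_n^k\times[\![0,n]\!]$ et $\mathrm{Ens}_{n-1}^k\times[\![0,n]\!]$ par les mêmes formules que sur les ensembles ambiants $\mathbb Z^n\times\mathfrak S_n\times[\![0,n]\!]$ et $\mathbb Z^{n-1}\times\mathfrak S_{n-1}\times[\![0,n]\!]$, et comme d'après le lemme \ref{lem:bij:BIS} (diagramme \eqref{diag:c:0412}) l'application $\mathrm{bij}$ est la restriction de $\underline{\mathrm{bij}}$, l'identité $\mathrm{sgn}\circ\mathrm{bij}=\widetilde{\mathrm{sgn}}$ résulte immédiatement de la partie relative aux signes du lemme \ref{lem:composition:BIS}.

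Le point essentiel est la composante affine, à savoir l'identité $f\circ\mathrm{bij}=\tilde f$ comme applications $\mathrm{Ens}_{n-1}^k\times[\![0,n]\!]\to\mathrm{Aff}(\Delta^{n-1},\Delta^n)$. Comme le but $\mathrm{Aff}(\Delta^{n-1},\Delta^n)$ s'injecte dans $\mathbf{Aff}(\mathbb R^{n-1},\mathbb R^n)$ d'après le lemme \ref{lem:A:2210:BIS}(b), il suffit de vérifier l'égalité après application de cette injection. D'une part, le lemme \ref{lem:2:33:0512} identifie l'image de $\tilde f(w,\tau,i)$ à $\underline{\tilde f}(w,\tau,i)$. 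D'autre part, comme $\mathrm{bij}$ est la restriction de $\underline{\mathrm{bij}}$ (diagramme \eqref{diag:c:0412}) et que $f$ est compatible avec $\underline f$ (lemme \ref{lem:2:33:0512}), l'image de $f(\mathrm{bij}(w,\tau,i))$ est $\underline f(\underline{\mathrm{bij}}(w,\tau,i))$.

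Enfin, j'invoquerais la partie affine du lemme \ref{lem:composition:BIS}, à savoir $\underline f\circ\underline{\mathrm{bij}}=\underline{\tilde f}$, pour conclure que les deux images dans $\mathbf{Aff}(\mathbb R^{n-1},\mathbb R^n)$ coïncident ; l'injectivité de $\mathrm{Aff}(\Delta^{n-1},\Delta^n)\hookrightarrow\mathbf{Aff}(\mathbb R^{n-1},\mathbb R^n)$ donne alors $f\circ\mathrm{bij}=\tilde f$. Il n'y a pas ici d'obstacle véritable : tout le contenu calculatoire a été préalablement concentré dans l'identité combinatoire \eqref{diag:b:0412} et dans l'énoncé de bijectivité \eqref{diag:c:0412} ; le seul point auquel il faut prendre garde est que $\mathrm{bij}$ prend bien ses valeurs dans le sous-ensemble de $\mathrm{Ens}_n^k\times[\![0,n]\!]$ sur lequel $f$ se restreint en une application à valeurs dans $\mathrm{Aff}(\Delta^{n-1},\Delta^n)$, ce qui est garanti par le lemme \ref{lem:bij:BIS} joint au lemme-définition \ref{lem:def:0412}.
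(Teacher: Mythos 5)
Your proof is correct and takes essentially the same route as the paper's, which likewise deduces the commutativity of \eqref{diag:d:0412} from the commutativity of \eqref{diag:b:0412} and \eqref{diag:c:0412} together with lemme \ref{lem:2:33:0512}. Your write-up merely makes explicit the injectivity of $\mathrm{Aff}(\Delta^{n-1},\Delta^n)\hookrightarrow\mathbf{Aff}(\mathbb R^{n-1},\mathbb R^n)$ (lemme \ref{lem:A:2210:BIS}(b)), a step the paper leaves implicit --- its one-line proof even cites itself by mistake where lemme \ref{lem:2:33:0512} is clearly intended.
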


\begin{proof}
Ceci provient de la commutativité des diagrammes \eqref{diag:b:0412} et \eqref{diag:c:0412}  
ainsi que du lemme \ref{compats:f:underlinef:0412}.
\end{proof}

\subsection{Construction d'endomorphismes de groupes de chaînes}\label{sect:2:4:2512}

Les résultats de cette section seront utilisés en section \ref{1214:2111} afin de montrer le (b) du théorème \ref{thm:ppal}. 

%\color{red}
\subsubsection{Morphismes dans une catégorie $\mathcal C$}

\begin{defn}
Soit $\mathcal C$ la petite catégorie dont l'ensemble d'objets est $\mathbb Z_{\geq 0}$, avec 
$\mathcal C(n,m):=\mathbb Z\mathbf{Top}(\Delta^n,\Delta^m)$, et dont la composition est donnée par la
linéarisation de la composition dans $\mathbf{Top}$. 
\end{defn}

Toute application affine étant continue, on a une famille de diagrammes 
$$
\mathrm{Aff}(\Delta^n,\Delta^m) \subset \mathcal{C}(n,m)
$$
pour $n,m\geq 0$, compatible avec les compositions d'applications. 

\begin{defn}\label{def:div:n:k}
On pose  
$$
\mathrm{div}_n^k:=\sum_{(v,\sigma)\in\mathrm{Ens}_n^k}\epsilon(\sigma)c_k(v,\sigma)\in
\mathbb Z\mathrm{Aff}(\Delta^n,\Delta^n)\subset \mathcal C(n,n). 
$$
\end{defn}

\begin{defn}\label{def:toto:0512}
On note 
$$
\partial_{n-1,n}:=\sum_{i=0}^n(-1)^i\partial_i^n\in\mathbb Z\mathrm{Aff}(\Delta^{n-1},\Delta^n)\subset \mathcal C(n-1,n). 
$$
\end{defn}

On sait que $\partial_{n,n+1}\circ\partial_{n-1,n}=0$. De plus, pour tout $p\geq 0$, le complexe 
$\cdots\to \mathcal C(k,p)\stackrel{-\circ\partial_{k-1,k}}{\to} \mathcal C(k-1,p)\to\cdots\to C(0,p)\to\mathbb Z\to 0$
est acyclique (l'homologie de $\Delta^p$ étant donnée par $\mathrm H_k(\Delta^p)=0$ si $k>0$ et $=\mathbb Z$ si $k=0$).

\subsubsection{Démonstration de $\mathrm{div}_n^k\circ \partial_{n-1,n}=\partial_{n-1,n}\circ\mathrm{div}_{n-1}^k$}\label{sect;demo}

Dans la section \ref{sect;demo}, on fixe $n,k\geq1$. 

On a  
\begin{equation}\label{eq:A:2210}
\mathrm{div}_n^k\circ\partial_{n-1,n}
=\sum_{x\in\mathrm{Ens}_n^k \times[\![0,n]\!]}\mathrm{sgn}(x)f(x). 
\end{equation}
en combinant les défs. \ref{def:2:9:0512}, \ref{titi:0512}, \ref{def:div:n:k}, \ref{def:toto:0512},  
et 
\begin{equation}\label{eq:B:2210}
\partial_{n-1,n}\circ\mathrm{div}_{n-1}^k
=\sum_{\tilde x\in\mathrm{Ens}_{n-1}^k \times[\![0,n]\!]}\widetilde{\mathrm{sgn}}(\tilde x)\tilde f(\tilde x). 
\end{equation}
en combinant les défs. \ref{def:2:17:0512} 2.17, \ref{titi:0512}, \ref{def:div:n:k}, \ref{def:toto:0512}
(égalités dans $\mathbb Z\mathrm{Aff}(\Delta^{n-1},\Delta^n)$). 

%%%%

%\color{black}

\begin{prop}\label{prop;2110}
On a pour tous $n,k\geq1$
$$
\mathrm{div}_n^k\circ \partial_{n-1,n}=\partial_{n-1,n}\circ\mathrm{div}_{n-1}^k 
$$
(égalité dans $\mathcal C(n-1,n)$). 
\end{prop}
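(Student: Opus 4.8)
The plan is to compare the two sides of the identity by expressing each as an explicit sum over an indexing set, and then to exhibit a sign-reversing involution together with a sign-preserving bijection that match up the terms. The two relevant formulas are already at hand: equation \eqref{eq:A:2210} writes $\mathrm{div}_n^k\circ\partial_{n-1,n}$ as $\sum_{x\in\mathrm{Ens}_n^k\times[\![0,n]\!]}\mathrm{sgn}(x)f(x)$, and equation \eqref{eq:B:2210} writes $\partial_{n-1,n}\circ\mathrm{div}_{n-1}^k$ as $\sum_{\tilde x\in\mathrm{Ens}_{n-1}^k\times[\![0,n]\!]}\widetilde{\mathrm{sgn}}(\tilde x)\tilde f(\tilde x)$. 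So the proposition amounts to the equality of these two sums in $\mathbb Z\mathrm{Aff}(\Delta^{n-1},\Delta^n)$.

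First I would split the index set $\mathrm{Ens}_n^k\times[\![0,n]\!]$ of the first sum into two parts according to whether $\mathrm{invol}(x)$ lies in $\mathrm{Ens}_n^k\times[\![0,n]\!]$ or not. On the part where $\mathrm{invol}(x)$ \emph{does} stay inside $\mathrm{Ens}_n^k\times[\![0,n]\!]$, the map $\mathrm{invol}$ restricts to a fixed-point-free involution (fixed-point-freeness follows from the remark after Lemma~\ref{lem:inv:BIS}, since $\mathrm{sgn}\circ\mathrm{invol}=-\mathrm{sgn}$). By the commutativity of diagram \eqref{diag:a:0412}, $f$ is invariant under $\mathrm{invol}$ while $\mathrm{sgn}$ changes sign; hence the terms indexed by $x$ and $\mathrm{invol}(x)$ cancel pairwise, and the entire contribution of this part vanishes. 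What survives is exactly the sum over $\{x\in\mathrm{Ens}_n^k\times[\![0,n]\!]\mid\mathrm{invol}(x)\notin\mathrm{Ens}_n^k\times[\![0,n]\!]\}$.

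It then remains to identify this surviving sum with the right-hand side \eqref{eq:B:2210}. For this I would invoke the bijection $\mathrm{bij}$ of Lemma~\ref{lem:bij:BIS}, which identifies $\mathrm{Ens}_{n-1}^k\times[\![0,n]\!]$ with precisely the index set $\{x\in\mathrm{Ens}_n^k\times[\![0,n]\!]\mid\mathrm{invol}(x)\notin\mathrm{Ens}_n^k\times[\![0,n]\!]\}$ of the surviving terms. The compatibility needed to transport the summands is furnished by diagram \eqref{diag:d:0412} (Lemma~\ref{compats:f:underlinef:0412}), which asserts that $(f,\mathrm{sgn})\circ\mathrm{bij}=(\tilde f,\widetilde{\mathrm{sgn}})$; that is, for every $\tilde x$ one has $f(\mathrm{bij}(\tilde x))=\tilde f(\tilde x)$ and $\mathrm{sgn}(\mathrm{bij}(\tilde x))=\widetilde{\mathrm{sgn}}(\tilde x)$. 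Re-indexing the surviving sum along $\mathrm{bij}$ thus turns it term by term into $\sum_{\tilde x}\widetilde{\mathrm{sgn}}(\tilde x)\tilde f(\tilde x)$, which is \eqref{eq:B:2210}, completing the proof.

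The conceptual content is thereby entirely offloaded onto the three combinatorial diagrams established earlier, so the argument at this stage is short. The only genuine point requiring care—and what I would regard as the main obstacle if the preparatory lemmas were not already in place—is the cancellation step: one must be certain that $\mathrm{invol}$ preserves the subset on which it is used (so that pairing is well defined) and that it is genuinely fixed-point-free there, since a fixed point would leave an uncanceled term with $\mathrm{sgn}=-\mathrm{sgn}$, i.e.\ $\mathrm{sgn}=0$, which is absurd and so confirms the absence of fixed points. Given Lemma~\ref{lem:inv:BIS} and Lemma~\ref{lem:bij:BIS}, every verification reduces to reading off the commutativity already proved, and no further computation is needed.
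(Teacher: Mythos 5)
Your proposal is correct and follows essentially the same route as the paper's own proof: split the sum \eqref{eq:A:2210} according to whether $\mathrm{invol}(x)$ stays in $\mathrm{Ens}_n^k\times[\![0,n]\!]$, kill the stable part by the sign-reversing involution, and re-index the surviving part along $\mathrm{bij}$ to obtain \eqref{eq:B:2210}. The only cosmetic difference is one of packaging: you invoke diagram \eqref{diag:d:0412} directly where the paper combines \eqref{diag:b:0412} with the two squares of the lemme \ref{lem:2:33:0512}, and note that \eqref{diag:a:0412} is stated for $\underline f$ rather than $f$, so your cancellation step tacitly uses the first square of the lemme \ref{lem:2:33:0512} (plus injectivity of $\mathrm{Aff}(\Delta^{n-1},\Delta^n)\hookrightarrow\mathbf{Aff}(\mathbb R^{n-1},\mathbb R^n)$) exactly as the paper does.
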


\begin{proof} 
On a 
\begin{align*}
    &\sum_{\substack{x\in\mathrm{Ens}_n^k \times[\![0,n]\!]| \\ \mathrm{invol}(x)\in \mathrm{Ens}_n^k \times[\![0,n]\!]}}
    \mathrm{sgn}(x)f(x)
    =\sum_{\substack{x\in\mathrm{Ens}_n^k \times[\![0,n]\!]| \\ \mathrm{invol}(x)\in \mathrm{Ens}_n^k \times[\![0,n]\!]}}
    \mathrm{sgn}(\mathrm{invol}(x))f(\mathrm{invol}(x))
    \\& =-
    \sum_{\substack{x\in\mathrm{Ens}_n^k \times[\![0,n]\!]|\\ \mathrm{invol}(x)\in \mathrm{Ens}_n^k \times[\![0,n]\!]}}
    \mathrm{sgn}(x)f(x)
\end{align*}
où la première égalité suit de ce que $\mathrm{invol}$ est une involution de $\{x\in\mathrm{Ens}_n^k 
\times[\![0,n]\!]|\mathrm{invol}(x)\in \mathrm{Ens}_n^k \times[\![0,n]\!]\}$ (cf. lemme \ref{lem:inv:BIS}(a)) 
et la deuxième égalité suit du lemme \ref{lem:inv:BIS}(b) et de la première égalité du lemme \ref{lem:2:33:0512}. 
On en déduit 
\begin{equation}\label{22:44:2110}
\sum_{\substack{x\in\mathrm{Ens}_n^k \times[\![0,n]\!]|\\ \mathrm{invol}(x)\in \mathrm{Ens}_n^k \times[\![0,n]\!]}}\mathrm{sgn}(x)f(x)=0. 
\end{equation}
Alors  
\begin{align*}
& \mathrm{div}_n^k\circ\partial_{n-1,n}
=\sum_{x\in\mathrm{Ens}_n^k \times[\![0,n]\!]}\mathrm{sgn}(x)f(x)
\\ & 
=\sum_{\substack{x\in\mathrm{Ens}_n^k \times[\![0,n]\!]|\\ \mathrm{invol}(x)\in \mathrm{Ens}_n^k \times[\![0,n]\!]}}\mathrm{sgn}(x)f(x)
+\sum_{\substack{x\in\mathrm{Ens}_n^k \times[\![0,n]\!]|\\ \mathrm{invol}(x)\notin \mathrm{Ens}_n^k \times[\![0,n]\!]}}\mathrm{sgn}(x)f(x)
\\ & 
=\sum_{\substack{x\in\mathrm{Ens}_n^k \times[\![0,n]\!]|\\ \mathrm{invol}(x)\notin \mathrm{Ens}_n^k \times[\![0,n]\!]}}\mathrm{sgn}(x)f(x)
=\sum_{y\in\mathrm{Ens}_{n-1}^k \times[\![0,n]\!]}\mathrm{sgn}(\mathrm{bij}(y))f(\mathrm{bij}(y))
\\ & =\sum_{y\in\mathrm{Ens}_{n-1}^k \times[\![0,n]\!]}\widetilde{\mathrm{sgn}}(y)\tilde f(y)
=\partial_{n-1,n}\circ\mathrm{div}_{n-1}^k
\end{align*}
(égalité dans $\mathbb Z\mathrm{Aff}(\Delta^{n-1},\Delta^n)$) où la première égalité suit de \eqref{eq:A:2210}, la troisième 
égalité suit de \eqref{22:44:2110}, la quatrième égalité suit du lemme \ref{lem:bij:BIS}, la cinquième égalité suit du lemme 
\ref{lem:composition:BIS} et des deux égalités du lemme \ref{lem:2:33:0512}, la sixième égalité suit de \eqref{eq:B:2210}. 

On en déduit l'égalité annoncée, les applications 
$\mathrm{Aff}(\Delta^n,\Delta^m)\to\mathcal C(n,m)$ étant compatibles aux compositions. 
\end{proof}

\subsubsection{Relation dans $\mathcal C$ entre $\mathrm{div}_\bullet^k$ et $id_\bullet$}

\begin{lem}
Pour tout $k\geq 0$,  il existe une famille $(L^k_{n+1,n})_{n\geq 0}$ avec $L^k_{n+1,n}\in\mathcal C(n+1,n)$, telle que 
pour tout $n\geq 0$, on a
\begin{equation}\label{hmtp:id}
\mathrm{id}_n-\mathrm{div}_n^k=L^k_{n+1,n}\circ\partial_{n,n+1}+\partial_{n-1,n}\circ L^k_{n,n-1}. 
\end{equation}
\end{lem}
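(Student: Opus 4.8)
The plan is to build the family $(L^k_{n+1,n})_{n\ge0}$ by induction on $n$, turning the acyclicity of the complexes recalled above into a homotopy; this is exactly the lifting mechanism of the fundamental lemma of homological algebra, applied inside $\mathcal C$ to the endomorphisms $\mathrm{id}_\bullet$ and $\mathrm{div}_\bullet^k$. With the convention $L^k_{0,-1}:=0$ (so that the term $\partial_{n-1,n}\circ L^k_{n,n-1}$ is read as $0$ when $n=0$), I set for each $n\ge0$
$$
\Phi_n:=\mathrm{id}_n-\mathrm{div}_n^k-\partial_{n-1,n}\circ L^k_{n,n-1}\in\mathcal C(n,n).
$$
The relation \eqref{hmtp:id} at level $n$ is then equivalent to the single equation $L^k_{n+1,n}\circ\partial_{n,n+1}=\Phi_n$, so the entire task reduces to exhibiting, stage by stage, $\Phi_n$ as a boundary in the acyclic complex $\cdots\to\mathcal C(n+1,n)\stackrel{-\circ\partial_{n,n+1}}{\to}\mathcal C(n,n)\stackrel{-\circ\partial_{n-1,n}}{\to}\mathcal C(n-1,n)\to\cdots$ obtained by specialising the complex recalled above to target $p=n$.

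For the base case $n=0$, note that $\mathrm{Ens}_0^k$ consists of the single pair formed by the empty vector and the identity of $\mathfrak S_0$, whose associated map $c_k$ is $\mathrm{id}_{\Delta^0}$; hence $\mathrm{div}_0^k=\mathrm{id}_0$ and $\Phi_0=0$, so one may take $L^k_{1,0}:=0$ and \eqref{hmtp:id} holds at level $0$.

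Suppose now $n\ge1$, that $L^k_{m+1,m}$ has been constructed for every $m<n$, and that \eqref{hmtp:id} holds at every level $m<n$. The heart of the argument is to check that $\Phi_n$ is a cycle, i.e.\ $\Phi_n\circ\partial_{n-1,n}=0$. Expanding, one uses Proposition \ref{prop;2110} in the form $\mathrm{div}_n^k\circ\partial_{n-1,n}=\partial_{n-1,n}\circ\mathrm{div}_{n-1}^k$, then rewrites $L^k_{n,n-1}\circ\partial_{n-1,n}$ by means of the induction hypothesis \eqref{hmtp:id} at level $n-1$, and finally discards a term containing $\partial_{n-1,n}\circ\partial_{n-2,n-1}=0$; this yields
\begin{align*}
\Phi_n\circ\partial_{n-1,n}
&=\partial_{n-1,n}-\partial_{n-1,n}\circ\mathrm{div}_{n-1}^k
-\partial_{n-1,n}\circ\bigl(\mathrm{id}_{n-1}-\mathrm{div}_{n-1}^k\bigr)\\
&=0.
\end{align*}
Since the complex with target $p=n$ is acyclic and $n\ge1$, every cycle of degree $n$ is a boundary, so $\Phi_n$ lies in the image of $-\circ\partial_{n,n+1}$. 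Any preimage provides an element $L^k_{n+1,n}\in\mathcal C(n+1,n)$ with $L^k_{n+1,n}\circ\partial_{n,n+1}=\Phi_n$, which is \eqref{hmtp:id} at level $n$, completing the induction.

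The main obstacle is precisely this cycle computation: it is the place where the commutation of $\mathrm{div}^k_\bullet$ with the boundary (Proposition \ref{prop;2110}) must mesh with the inductively assumed homotopy relation and with $\partial^2=0$ to produce the cancellation. Everything else — the passage from acyclicity to an explicit lift, and the bookkeeping of the degenerate conventions in degree $0$ — is formal.
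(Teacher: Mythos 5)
Your proof is correct and follows essentially the same route as the paper's: induction on $n$, with the cycle computation $\Phi_n\circ\partial_{n-1,n}=0$ obtained by combining Proposition \ref{prop;2110}, the inductive homotopy relation and $\partial_{n-1,n}\circ\partial_{n-2,n-1}=0$, then lifting via acyclicity of the complex with target $n$. The only (welcome) difference is that you make explicit the base-case fact $\mathrm{div}_0^k=\mathrm{id}_0$ (via $\mathrm{Ens}_0^k$ being a singleton), which the paper leaves implicit.
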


\begin{proof}
Montrons par récurrence sur $n\geq 0$ l'existence d'une famille $(L_{m+1,m}^k)_{m\leq n}$ telle que pour tout 
$m\leq n$, on a $\mathrm{id}_n-\mathrm{div}_n^k=L^k_{n+1,n}\circ\partial_{n,n+1}+\partial_{n-1,n}\circ L^k_{n,n-1}$ (énoncé $E(n)$).

Posons $L_{1,0}^k:=0$, alors on a $\mathrm{id}_0-\mathrm{div}_0^k=L^k_{1,0}\circ\partial_{0,1}$ d'où l'énoncé 
$E(0)$. 

Soit $n\geq 1$, et supposons $E(n-1)$ vérifié avec une famille $(L_{m+1,m}^k)_{m\leq n-1}$. Alors 
\begin{align*}
& (\mathrm{id}_n-\mathrm{div}_n^k-\partial_{n-1,n} \circ L_{n,n-1}^k) \circ \partial_{n-1,n}
=\partial_{n-1,n}-\mathrm{div}_n^k \circ \partial_{n-1,n}-\partial_{n-1,n} \circ L_{n,n-1}^k \circ \partial_{n-1,n}
\\ & 
=\partial_{n-1,n}-\partial_{n-1,n} \circ \mathrm{div}_{n-1}^k-\partial_{n-1,n} \circ L_{n,n-1}^k \circ \partial_{n-1,n} 
=\partial_{n-1,n} \circ (\mathrm{id}_{n-1}-\mathrm{div}_{n-1}^k-L_{n,n-1}^k \circ \partial_{n-1,n})
\\ & 
=\partial_{n-1,n} \circ (\mathrm{id}_{n-1}-\mathrm{div}_{n-1}^k-L_{n,n-1}^k \circ \partial_{n-1,n}-\partial_{n-2,n-1} \circ L_{n-1,n-2}^k) 
=\partial_{n-1,n} \circ0=0
\end{align*}
où la seconde égalité suit de la proposition \ref{prop;2110}, la quatrième égalité suit de $\partial_{n-1,n} \circ \partial_{n-2,n-1}=0$, 
la cinquième égalité suit de $E(n-1)$. 

Donc $\mathrm{id}_n-\mathrm{div}_n^k-\partial_{n-1,n} \circ L_{n,n-1}^k$ appartient au noyau de l'application $-\circ \partial_{n-1,n} : 
\mathcal C(n,n)\to \mathcal C(n-1,n)$, qui par acyclicité est égal à l'image de l'application 
$-\circ \partial_{n,n+1} : \mathcal C(n+1,n)\to \mathcal C(n,n)$. Il existe donc $L^k_{n+1,n}\in\mathcal C(n+1,n)$ tel que 
$\mathrm{id}_n-\mathrm{div}_n^k-\partial_{n-1,n} \circ L_{n,n-1}^k=L_{n+1,n}^k\circ\partial_{n,n+1}$, ce qui implique 
$E(n)$. 
\end{proof}

\subsubsection{Endomorphismes de groupes de chaînes singulières}

Soit $X$ un espace topologique. Pour $n\geq 0$, on note $C_n(X):=\mathbb Z\mathbf{Top}(\Delta^n,X)$. Pour $n,m\geq0$, on a une 
application $C_n(X)\times \mathcal C(m,n)\to C_m(X)$ induite par la composition $(c,x)\mapsto c\circ x$. 

\begin{defn}\label{def:x^*}
Pour $x\in \mathcal C(m,n)$, on note $x^* : C_n(X)\to C_m(X)$ l'application $c\mapsto c\circ x$. 
\end{defn}

Alors $(x\circ y)^*=y^*\circ x^*$. 

\begin{lem}\label{lem:images:source:2809}
Si $Y$ est un sous-ensemble de $X$, 
on a $x^*(C_n(Y))\subset C_m(Y)$. 
\end{lem}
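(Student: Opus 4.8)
The plan is to reduce the statement to single generators by bilinearity, and then to verify one elementary image inclusion. Recall that $\mathcal C(m,n)=\mathbb Z\mathbf{Top}(\Delta^m,\Delta^n)$ and $C_n(X)=\mathbb Z\mathbf{Top}(\Delta^n,X)$ are free $\mathbb Z$-modules, that $C_n(Y)=C_n(Y_X)$ is the submodule spanned by those $c\in\mathbf{Top}(\Delta^n,X)$ with $c(\Delta^n)\subset Y$, and that the composition $(c,x)\mapsto c\circ x$ defining $x^*$ is $\mathbb Z$-bilinear. Writing $x=\sum_i a_i x_i$ with $x_i\in\mathbf{Top}(\Delta^m,\Delta^n)$ and an arbitrary generator $c\in\mathbf{Top}(\Delta^n,Y_X)$ of $C_n(Y)$, one has $x^*(c)=c\circ x=\sum_i a_i\,(c\circ x_i)$, so it suffices to show $c\circ x_i\in C_m(Y)$ for a single continuous map $x_i:\Delta^m\to\Delta^n$ and a single $c:\Delta^n\to X$ with $c(\Delta^n)\subset Y$.

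First I would note that $c\circ x_i$ is then a single element of $\mathbf{Top}(\Delta^m,X)$, so by the description $\mathbf{Top}(\Delta^m,Y_X)=\{f\in\mathbf{Top}(\Delta^m,X)\mid f(\Delta^m)\subset Y\}$ recalled in section \ref{sect:1:1:2512}, the only thing to check is the image condition $(c\circ x_i)(\Delta^m)\subset Y$. This is immediate from the chain $(c\circ x_i)(\Delta^m)=c\bigl(x_i(\Delta^m)\bigr)\subset c(\Delta^n)\subset Y$, where the first inclusion uses $x_i(\Delta^m)\subset\Delta^n$ (because $x_i$ takes values in $\Delta^n$) together with the inclusion of images under $c$, and the last inclusion is the hypothesis $c(\Delta^n)\subset Y$. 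Hence $c\circ x_i\in\mathbf{Top}(\Delta^m,Y_X)$, i.e. $c\circ x_i\in C_m(Y)$.

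Finally I would assemble the general case: for arbitrary $c=\sum_j b_j c_j$ in $C_n(Y)$ (so each $c_j\in\mathbf{Top}(\Delta^n,Y_X)$) one has $x^*(c)=\sum_{i,j}a_i b_j\,(c_j\circ x_i)$, a $\mathbb Z$-linear combination of elements of $\mathbf{Top}(\Delta^m,Y_X)$, hence an element of $C_m(Y)$. I do not expect any genuine obstacle: the lemma is exactly the functoriality of the singular chain construction with respect to the subspace $Y$, and its entire content is the set-theoretic inclusion $c\bigl(x_i(\Delta^m)\bigr)\subset c(\Delta^n)$; the bilinearity reduction is purely formal.
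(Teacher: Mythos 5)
Your proof is correct and is essentially the paper's own argument, spelled out in full: the paper's proof is the one-line remark that the statement follows because $x^*$ is a composition at the source, which is exactly the content of your bilinearity reduction plus the inclusion $(c\circ x_i)(\Delta^m)=c\bigl(x_i(\Delta^m)\bigr)\subset c(\Delta^n)\subset Y$. No discrepancy with the paper's route; yours is just the expanded version.
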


\begin{proof}
Provient de ce que $x^*$ est une composition à la source. 
\end{proof}

De plus pour $n\geq 1$,  
$\partial_{n-1,n}^* : C_n(X)\to C_{n-1}(X)$ coïncide avec la différentielle singulière $\partial_n$. 

Alors \eqref{hmtp:id} implique 
\begin{equation}\label{132:2809}
\mathrm{id}_{C_n(X)}-(\mathrm{div}_n^k)^*=\partial_{n,n+1}^*\circ (L^k_{n+1,n})^*+(L^k_{n,n-1})^*\circ\partial_{n-1,n}^* 
\end{equation}
(égalité d'endomorphismes de $C_n(X)$) pour tout $n\geq0$. 

\subsection{Démonstration de (b) du théorème \ref{thm:ppal}}\label{1214:2111}

On se place dans le cadre de la section \ref{sect:res:ppal}: $X$ est un espace topologique et $a,b\in X$. 

\subsubsection{Composition de chemins}

\begin{defn}\label{def:appl:aff}
Si $s,t,u,v\in\mathbb R$ avec $s\neq t$, on note $a_{s,t}^{u,v}$ l'unique application affine de $\mathbb R$ dans lui-même telle que 
$s\mapsto u$ et $t\mapsto v$. 
\end{defn}

Soit $X$ un espace topologique, soit $a_0,\ldots,a_m\in X$ et $\tilde\gamma_i\in\mathrm{Chem}(a_i,a_{i+1})$ pour $i\in[\![0,m-1]\!]$. 

\begin{defn}
$\tilde\gamma_{m-1}*\ldots*\tilde\gamma_0\in\mathrm{Chem}(a_0,a_m)$ est le chemin tel que pour pour $i\in[\![0,m-1]\!]$, la restriction  
$(\tilde\gamma_{m-1}*\ldots*\tilde\gamma_0)_{|[i/m,(i+1)/m]}$ à $[i/m,(i+1)/m]$ coïncide avec $\tilde\gamma_i\circ a_{i/m,(i+1)/m}^{0,1}$
(conditions cohérentes car $\tilde\gamma_i(1)=\tilde\gamma_{i+1}(0)$ pour $i\in[\![0,m-2]\!]$).
\end{defn}

On a alors, en notant $[-]$ l'application canonique $\mathrm{Chem}(a,b)\to\pi_1(X;a,b)$ pour $a,b\in X$ quelconques, l'égalité 
\begin{equation}\label{eq:2106:2505}
[\tilde\gamma_{n-1}*\ldots*\tilde\gamma_0]=[\tilde\gamma_{n-1}]\cdots[\tilde\gamma_0]    
\end{equation}
(égalité dans $\pi_1(X;a_0,a_m)$, le produit dans le membre de droite étant celui dans le groupoïde $\pi_1(X)$). 

\subsubsection{Calcul de $(\mathrm{div}_n^k)^*((\tilde\gamma_k*\cdots*\tilde\gamma_1)^{(n)})$}\label{calcul:2809}

\begin{lem}\label{*:1210}
Il existe une unique application 
\begin{equation}\label{bij:1210}
\{(n_1,\ldots,n_k)|n_1\geq 0,\ldots,n_k\geq 0\text{ et }n_1+\cdots+n_k=n\}\to[\![0,k-1]\!]^n
\end{equation}
qui envoie $(n_1,\ldots,n_k)$ vers l'élément $v\in [\![0,k-1]\!]^n$ tel que pour $i=0,\ldots,k-1$ on a 
$v_{|n_1+\cdots+n_i+[\![1,n_{i+1}]\!]}=i$. Cette application est bijective, et la bijection réciproque envoie $v\in[\![0,k-1]\!]^n$ 
vers $(n_1,\ldots,n_k)$ donné par $n_i=|v^{-1}(i-1)|$ pour $i\in[\![1,k]\!]$. 
\end{lem}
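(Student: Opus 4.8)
The plan is to treat this as the classical ``stars and bars'' correspondence, organized around two mutually inverse maps. Write $\Phi$ for the map \eqref{bij:1210} and, noting that its image actually lies in the set $[\![0,k-1]\!]^n_{\leq}$ of weakly increasing sequences introduced in Définition \ref{def:ens:n:k:1210:BIS}, let $\Psi$ denote the candidate inverse $v\mapsto(|v^{-1}(0)|,\ldots,|v^{-1}(k-1)|)$. First I would establish that $\Phi$ is well defined: setting $B_i:=[\![n_1+\cdots+n_i+1,\,n_1+\cdots+n_{i+1}]\!]$ for $i=0,\ldots,k-1$, these are consecutive (possibly empty) integer intervals whose disjoint union is $[\![1,n_1+\cdots+n_k]\!]=[\![1,n]\!]$, so the prescription $v_{|B_i}=i$ determines a unique $v\in[\![0,k-1]\!]^n$; this yields simultaneously the existence and the uniqueness asserted in the first sentence. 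Since $v$ takes the value $i$ on $B_i$ and the blocks $B_0,\ldots,B_{k-1}$ are arranged in increasing order of index, $v$ is weakly increasing, so $\Phi$ in fact lands in $[\![0,k-1]\!]^n_{\leq}$.

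Next I would verify that $\Psi$ makes sense and is inverse to $\Phi$. For an arbitrary $v$ the fibres $v^{-1}(0),\ldots,v^{-1}(k-1)$ partition $[\![1,n]\!]$, whence $\sum_{i}|v^{-1}(i-1)|=n$ and $\Psi(v)$ is indeed a composition of $n$ into $k$ nonnegative parts. The identity $\Psi\circ\Phi=\mathrm{id}$ is immediate: for $v=\Phi(n_1,\ldots,n_k)$ one has $v^{-1}(i-1)=B_{i-1}$, so $|v^{-1}(i-1)|=|B_{i-1}|=n_i$. For the reverse identity $\Phi\circ\Psi=\mathrm{id}$, given a weakly increasing $v$ I would use monotonicity crucially: because $v$ is nondecreasing, each fibre $v^{-1}(i-1)$ is an interval, and these intervals occur in increasing order, so with $n_i:=|v^{-1}(i-1)|$ one recovers $v^{-1}(i-1)=B_{i-1}$ and therefore $\Phi(\Psi(v))=v$. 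Together these two identities give the bijectivity together with the explicit inverse.

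There is no serious obstacle here; the one point deserving attention is that bijectivity holds onto $[\![0,k-1]\!]^n_{\leq}$ and not onto all of $[\![0,k-1]\!]^n$. The monotonicity of $v$ is precisely what powers $\Phi\circ\Psi=\mathrm{id}$: without it, $\Phi\circ\Psi$ would replace $v$ by the nondecreasing rearrangement of its values rather than fixing it. Accordingly I would phrase the target of \eqref{bij:1210} as $[\![0,k-1]\!]^n_{\leq}$, consistent with the notation of Définition \ref{def:ens:n:k:1210:BIS}, which is also the ambient set in which these sequences $v$ subsequently enter the definition of $\mathrm{Ens}_n^k$.
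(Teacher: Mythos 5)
Votre démonstration est correcte ; le papier, lui, se contente d'un « Immédiate », de sorte que votre texte explicite précisément l'argument standard (les deux applications mutuellement inverses du type « stars and bars ») que les auteurs jugent évident. Votre remarque finale sur le but de \eqref{bij:1210} est par ailleurs une vraie correction de l'énoncé : tel qu'il est écrit, avec pour but $[\![0,k-1]\!]^n$ tout entier, l'application est injective mais non surjective (la source a $\binom{n+k-1}{k-1}$ éléments contre $k^n$ pour le but, et l'image est formée de suites croissantes au sens large), et la « bijection réciproque » annoncée n'est qu'un inverse à gauche — sur un $v$ non monotone, $\Phi\circ\Psi$ rend le réarrangement croissant de $v$ et non $v$. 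Restreindre le but à $[\![0,k-1]\!]^n_{\leq}$ (notation de la déf. \ref{def:ens:n:k:1210:BIS}) est exactement ce qu'il faut, et c'est cohérent avec l'usage ultérieur du lemme (lemmes \ref{*a*1210} et \ref{*b*1210}), où les images de \eqref{bij:1210} interviennent comme premières composantes d'éléments de $\mathrm{Ens}_n^k$, donc comme suites croissantes.
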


\begin{proof}
Immédiate. 
\end{proof}

\begin{defn}
Pour $v\in [\![0,k-1]\!]^n$, on pose $\mathfrak S(v):=\{\sigma\in\mathfrak S_n|(v,\sigma)\in\mathrm{Ens}_n^k\}$ (cf. déf. 
\ref{def:ens:n:k:1210:BIS}).
\end{defn}

\begin{lem}\label{*a*1210}
Si $(n_1,\ldots,n_k)\in\{(n_1,\ldots,n_k)|n_1\geq 0,\ldots,n_k\geq 0$ et $n_1+\cdots+n_k=n\}$ et si $v$ est l'image de 
$(n_1,\ldots,n_k)$ par la bijection \eqref{bij:1210}, alors on a 
$\mathfrak S_{n_1,\ldots,n_k}=\mathfrak S(v)$. 
\end{lem}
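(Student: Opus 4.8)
Le plan est de dérouler les définitions : l'énoncé est une identification directe de données combinatoires. Je commencerais par observer que l'élément $v$ image de $(n_1,\ldots,n_k)$ par \eqref{bij:1210} est faiblement croissant. En effet, par construction $v$ prend la valeur constante $i$ sur le bloc $n_1+\cdots+n_i+[\![1,n_{i+1}]\!]$ pour $i=0,\ldots,k-1$ ; ces blocs se succèdent dans $[\![1,n]\!]$ et les valeurs associées croissent avec $i$, d'où $0\leq v_1\leq\cdots\leq v_n\leq k-1$, c'est-à-dire $v\in[\![0,k-1]\!]^n_{\leq}$ (ceci restant vrai lorsque certains $n_i$ sont nuls, les blocs vides étant alors omis).

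J'utiliserais ensuite la description de la bijection réciproque fournie par le lemme \ref{*:1210}, à savoir $n_i=|v^{-1}(i-1)|$ pour tout $i\in[\![1,k]\!]$. Les $k$-uplets $(n_1,\ldots,n_k)$ et $(|v^{-1}(0)|,\ldots,|v^{-1}(k-1)|)$ coïncident donc terme à terme, ce qui entraîne l'égalité des groupes de battages $\mathfrak S_{n_1,\ldots,n_k}=\mathfrak S_{|v^{-1}(0)|,\ldots,|v^{-1}(k-1)|}$.

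Enfin, je conclurais à l'aide de la déf. \ref{def:ens:n:k:1210:BIS} : puisque $v\in[\![0,k-1]\!]^n_{\leq}$ est déjà acquis, la seule contrainte restante pour que $(v,\sigma)$ appartienne à $\mathrm{Ens}_n^k$ est la condition de battage $\sigma\in\mathfrak S_{|v^{-1}(0)|,\ldots,|v^{-1}(k-1)|}$. En combinant avec l'étape précédente et la définition de $\mathfrak S(v)$, on obtient
$$
\mathfrak S(v)=\{\sigma\in\mathfrak S_n|(v,\sigma)\in\mathrm{Ens}_n^k\}=\mathfrak S_{|v^{-1}(0)|,\ldots,|v^{-1}(k-1)|}=\mathfrak S_{n_1,\ldots,n_k},
$$
ce qui est l'égalité annoncée. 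L'énoncé étant immédiat, la seule véritable vérification est celle, faite à la première étape, de l'appartenance automatique $v\in[\![0,k-1]\!]^n_{\leq}$ : c'est elle qui garantit que l'appartenance à $\mathrm{Ens}_n^k$ ne dépend plus que de la condition portant sur $\sigma$.
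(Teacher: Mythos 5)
Votre démonstration est correcte, mais elle n'emprunte pas exactement le chemin du texte. Le texte procède par positions de descente : il établit d'abord l'équivalence \eqref{(equiv):1210}, $(v_i<v_{i+1})\iff(i\in\{n_1,n_1+n_2,\ldots,n_1+\cdots+n_{k-1}\})$, caractérise ensuite l'appartenance $\sigma\in\mathfrak S(v)$ par la condition que toute descente de $\sigma$ ($\sigma(i)>\sigma(i+1)$) ait lieu en une position où $v$ croît strictement, puis conclut en identifiant les deux ensembles de positions autorisées. Vous court-circuitez cette reformulation : après avoir vérifié que $v\in[\![0,k-1]\!]^n_{\leq}$, vous invoquez la formule de la bijection réciproque du lemme \ref{*:1210}, $n_i=|v^{-1}(i-1)|$, ce qui identifie terme à terme les uplets $(n_1,\ldots,n_k)$ et $(|v^{-1}(0)|,\ldots,|v^{-1}(k-1)|)$ ; les conditions définissant $\mathfrak S_{n_1,\ldots,n_k}$ et, via la déf. \ref{def:ens:n:k:1210:BIS}, l'ensemble $\mathfrak S(v)$ deviennent alors littéralement les mêmes. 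Les deux arguments reposent en dernière analyse sur la même observation — les ensembles de niveau de $v$ sont exactement les blocs $n_1+\cdots+n_{i-1}+[\![1,n_i]\!]$ — mais votre rédaction est plus économe, en évitant tout passage par les descentes (et donc toute utilisation implicite de \eqref{TOTO:2311:BIS}), au prix de s'appuyer sur la formule réciproque du lemme \ref{*:1210}, que le texte énonce en la qualifiant d'immédiate ; votre vérification initiale de la croissance de $v$, également nécessaire dans la preuve du texte, rend l'argument complet.
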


\begin{proof}
L'application $v:[\![1,n]\!]\to[\![0,k-1]\!]$ étant constante sur chaque sous-ensemble $n_1+\cdots+n_i+[\![1,n_{i+1}]\!]$ 
et les valeurs prises sur des sous-ensembles consécutifs étant strictement croissantes, on a pour $i\in[\![1,n]\!]$ l'équivalence 
\begin{equation}\label{(equiv):1210}
(v_i<v_{i+1})\iff (i\in \{n_1,n_1+n_2,\ldots,n_1+\cdots+n_{k-1}\}).     
\end{equation} 
Soit alors $\sigma\in\mathfrak S_n$. On a $\sigma\in\mathfrak S(v)$ si et seulement si 
$$
\forall i\in[\![1,n]\!], \quad (\sigma(i)>\sigma(i+1))\implies
(v_i<v_{i+1}) ; 
$$
d'après \eqref{(equiv):1210} une condition équivalente est 
$$
\forall i\in[\![1,n]\!], \quad (\sigma(i)>\sigma(i+1))\implies(i\in \{n_1,n_1+n_2,\ldots,n_1+\cdots+n_{k-1}\}).  
$$
Donc $\sigma\in\mathfrak S(v)$ si et seulement si $\sigma$ est croissante sur les sous-ensembles 
$[\![1,n_1]\!]$, $n_1+[\![1,n_2]\!]$, etc., $n_1+\cdots+n_{k-1}+[\![1,n_k]\!]$, c'est à dire si et seulement si $\sigma\in\mathfrak S_{n_1,\ldots,n_k}$. 
\end{proof}

\begin{defn}
On pose $\widetilde{\mathrm{Ens}}_n^k:=\{((n_1,\ldots,n_k),\sigma)|n_1\geq 0,\ldots,n_k\geq 0$ et 
$n_1+\cdots+n_k=n$ et $\sigma\in\mathfrak S_{n_1,\ldots,n_k}\}$. 
\end{defn}

\begin{lem}\label{*b*1210}
Il existe une unique bijection 
\begin{equation}\label{nllebij:1210}
\mathrm{Ens}_n^k\to\widetilde{\mathrm{Ens}}_n^k    
\end{equation}
qui envoie $((n_1,\ldots,n_k),\sigma)$ vers 
$(v,\sigma)$, où $v$ est l'image de $(n_1,\ldots,n_k)$ par \eqref{bij:1210}. 
\end{lem}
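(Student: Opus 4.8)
Le plan est de rendre explicite la correspondance prescrite et de montrer qu'elle définit bien une bijection, en s'appuyant sur les lemmes \ref{*:1210} et \ref{*a*1210}. Comme la prescription de l'énoncé associe à un couple $((n_1,\ldots,n_k),\sigma)\in\widetilde{\mathrm{Ens}}_n^k$ l'élément $(v,\sigma)\in\mathrm{Ens}_n^k$ (où $v$ est l'image de $(n_1,\ldots,n_k)$ par \eqref{bij:1210}), je commencerais par introduire l'application $\Psi : \widetilde{\mathrm{Ens}}_n^k\to\mathrm{Ens}_n^k$ définie par cette formule ; la bijection annoncée \eqref{nllebij:1210} sera alors $\Psi^{-1}$. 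La composante en $\sigma$ reste inchangée, et seule la composante numérique est transformée via \eqref{bij:1210}, ce qui ramène tout à des vérifications sur les conditions définissant les deux ensembles.

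La première étape consiste à montrer que $\Psi$ est bien définie, c'est-à-dire que $(v,\sigma)\in\mathrm{Ens}_n^k$. D'une part, $v\in[\![0,k-1]\!]^n_{\leq}$ : en effet $v$, image de $(n_1,\ldots,n_k)$ par \eqref{bij:1210}, prend la valeur $i$ sur $n_1+\cdots+n_i+[\![1,n_{i+1}]\!]$, et est donc croissante. D'autre part, la description de la réciproque de \eqref{bij:1210} fournie par le lemme \ref{*:1210} donne $|v^{-1}(i-1)|=n_i$ pour tout $i\in[\![1,k]\!]$, d'où l'égalité $\mathfrak S_{|v^{-1}(0)|,\ldots,|v^{-1}(k-1)|}=\mathfrak S_{n_1,\ldots,n_k}$ ; cette dernière contient $\sigma$ par hypothèse sur $\widetilde{\mathrm{Ens}}_n^k$, ce qui établit $(v,\sigma)\in\mathrm{Ens}_n^k$. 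Cette identification des deux groupes de battages est précisément le contenu du lemme \ref{*a*1210}, qui identifie $\mathfrak S_{n_1,\ldots,n_k}$ à $\mathfrak S(v)$.

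La seconde étape consiste à exhiber l'application en sens inverse et à vérifier qu'elle est réciproque de $\Psi$. Je poserais $\Phi : \mathrm{Ens}_n^k\to\widetilde{\mathrm{Ens}}_n^k$, $(v,\sigma)\mapsto((|v^{-1}(0)|,\ldots,|v^{-1}(k-1)|),\sigma)$. Elle est bien définie : pour $(v,\sigma)\in\mathrm{Ens}_n^k$, les entiers $n_i:=|v^{-1}(i-1)|$ vérifient $n_i\geq 0$ et $n_1+\cdots+n_k=n$, et la condition $\sigma\in\mathfrak S_{|v^{-1}(0)|,\ldots,|v^{-1}(k-1)|}$ issue de la déf. \ref{def:ens:n:k:1210:BIS} est exactement $\sigma\in\mathfrak S_{n_1,\ldots,n_k}$, de sorte que $\Phi(v,\sigma)\in\widetilde{\mathrm{Ens}}_n^k$. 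Puisque la composante en $\sigma$ est préservée par $\Phi$ comme par $\Psi$, le fait que $\Phi\circ\Psi$ et $\Psi\circ\Phi$ soient l'identité se réduit à la même propriété pour \eqref{bij:1210} et sa réciproque, assurée par le lemme \ref{*:1210}. Ainsi $\Phi=\Psi^{-1}$ est la bijection cherchée.

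Enfin, l'unicité est immédiate puisqu'une application est déterminée par ses valeurs : toute application satisfaisant la prescription coïncide avec $\Psi$, donc avec $\Phi^{-1}$. Je n'anticipe pas d'obstacle véritable, la démonstration étant un recollement formel ; le seul point à surveiller est la compatibilité des deux formes de la condition portant sur $\sigma$ — appartenance à $\mathfrak S_{|v^{-1}(0)|,\ldots,|v^{-1}(k-1)|}$ du côté $\mathrm{Ens}_n^k$ et à $\mathfrak S_{n_1,\ldots,n_k}$ du côté $\widetilde{\mathrm{Ens}}_n^k$ — laquelle est garantie par le lemme \ref{*a*1210} joint à la description de la réciproque de \eqref{bij:1210}.
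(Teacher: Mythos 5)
Votre démonstration est correcte et suit essentiellement la même voie que celle du papier, qui se réduit à la phrase « Conséquence des lemmes \ref{*:1210} et \ref{*a*1210} » : vous ne faites qu'expliciter ce que ces deux lemmes fournissent (bonne définition de la correspondance via la bijection \eqref{bij:1210} et identification $\mathfrak S_{n_1,\ldots,n_k}=\mathfrak S(v)$), en construisant l'application réciproque et en vérifiant les compositions. Votre lecture de l'énoncé (la prescription décrit l'application $\widetilde{\mathrm{Ens}}_n^k\to\mathrm{Ens}_n^k$, la bijection annoncée en étant l'inverse) résout proprement la légère incohérence de sens de flèche du texte, sans rien changer au fond.
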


\begin{proof}
Conséquence des lemmes \ref{*:1210} et \ref{*a*1210}.  
\end{proof}

Notons que pour $n_1+\cdots+n_k=n$ et $\sigma\in\mathfrak S_{n_1,\ldots,n_k}$, l'automorphisme de $[0,1]^n$ dans la catégorie $\mathbf{Top}$ 
donné par $\sigma^*$ induit un élément, noté encore  $\sigma^*$ de $\mathbf{Top}(\Delta^n,\Delta^{n_1}\times\ldots\times\Delta^{n_k})$. 
Par ailleurs, $\tilde\gamma_1^{n_1}\times\cdots\times\tilde\gamma_k^{n_k}\in
\mathbf{Top}(\Delta^{n_1}\times\ldots\times\Delta^{n_k},X^n)$, donc 
$(\tilde\gamma_1^{n_1}\times\cdots\times\tilde\gamma_k^{n_k})\circ\sigma^*\in\mathbf{Top}(\Delta^n,X^n)$. 

\begin{lem}\label{lem:c:1210}
Les applications $\mathrm{Ens}_n^k\to\mathbf{Top}(\Delta^n,X^n)$ et $\widetilde{\mathrm{Ens}}_n^k\to\mathbf{Top}(\Delta^n,X^n)$
données respectivement par $((n_1,\ldots,n_k),\sigma)\mapsto
(\tilde\gamma_1^{(n_1)}\times\cdots\times\tilde\gamma_k^{(n_k)})\circ\sigma^*$ et 
$(v,\sigma)\mapsto \tilde\gamma^{(n)}\circ c(v,\sigma)$ sont telles que le diagramme
$$
\xymatrix{\mathrm{Ens}_n^k \ar[rd]\ar^{\eqref{nllebij:1210}}[rr]& & \widetilde{\mathrm{Ens}}_n^k\ar[ld]\\ 
& \mathbf{Top}(\Delta^n,X^n)& }
$$
est commutatif. 
\end{lem}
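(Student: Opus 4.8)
La démonstration que je propose consiste à vérifier directement l'égalité des deux applications $\Delta^n\to X^n$ obtenues par les deux chemins du triangle, en les évaluant en un point arbitraire $x=(x_1,\ldots,x_n)\in\Delta^n$ et en comparant leurs coordonnées une à une. Je commencerais donc par fixer un élément $((n_1,\ldots,n_k),\sigma)\in\widetilde{\mathrm{Ens}}_n^k$ et par noter $(v,\sigma)\in\mathrm{Ens}_n^k$ son antécédent par la bijection \eqref{nllebij:1210} ; d'après le lemme \ref{*:1210}, $v$ est caractérisé par la condition que $v_j=i-1$ pour tout $j$ appartenant au $i$-ème bloc $n_1+\cdots+n_{i-1}+[\![1,n_i]\!]$ (pour $i\in[\![1,k]\!]$). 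Il s'agit alors de montrer l'identité $\tilde\gamma^{(n)}\circ c_k(v,\sigma)=(\tilde\gamma_1^{(n_1)}\times\cdots\times\tilde\gamma_k^{(n_k)})\circ\sigma^*$ dans $\mathbf{Top}(\Delta^n,X^n)$.

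Je calculerais d'abord le membre de gauche. Pour $x\in\Delta^n$, on a $c_k(v,\sigma)(x)=(y_1,\ldots,y_n)$ avec $y_j=(v_j+x_{\sigma(j)})/k$, puis $\tilde\gamma^{(n)}(y_1,\ldots,y_n)=(\tilde\gamma(y_1),\ldots,\tilde\gamma(y_n))$ d'après la déf. \ref{def:2:1:2311} (via $\mathrm{can}_n$ et $\tilde\gamma^n$). Le point clef est que $v_j\in[\![0,k-1]\!]$ et $x_{\sigma(j)}\in[0,1]$ entraînent $y_j\in[v_j/k,(v_j+1)/k]$, de sorte que $y_j$ appartient au $(v_j+1)$-ème sous-intervalle de la subdivision de $[0,1]$ servant à définir $\tilde\gamma=\tilde\gamma_k*\cdots*\tilde\gamma_1$. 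Par définition de la composition des chemins, $\tilde\gamma(y_j)=\tilde\gamma_{v_j+1}(a^{0,1}_{v_j/k,(v_j+1)/k}(y_j))$, et la reparamétrisation affine donne $a^{0,1}_{v_j/k,(v_j+1)/k}(y_j)=ky_j-v_j=x_{\sigma(j)}$. La $j$-ème coordonnée du membre de gauche vaut donc $\tilde\gamma_{v_j+1}(x_{\sigma(j)})$.

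Je calculerais ensuite le membre de droite. Comme $\sigma\in\mathfrak S_{n_1,\ldots,n_k}$ est croissante en restriction à chaque bloc, $\sigma^*$ envoie $\Delta^n$ dans $\Delta^{n_1}\times\cdots\times\Delta^{n_k}$ en décomposant $x$ en les blocs $u^{(i)}:=(x_{\sigma(n_1+\cdots+n_{i-1}+1)},\ldots,x_{\sigma(n_1+\cdots+n_i)})\in\Delta^{n_i}$. En appliquant $\tilde\gamma_i^{(n_i)}=\tilde\gamma_i^{n_i}\circ\mathrm{can}_{n_i}$ coordonnée par coordonnée, on obtient que pour un indice global $j=n_1+\cdots+n_{i-1}+\ell$ du $i$-ème bloc, la $j$-ème coordonnée du membre de droite vaut $\tilde\gamma_i(x_{\sigma(j)})$.

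Il resterait alors à confronter les deux résultats : pour $j$ dans le $i$-ème bloc, la caractérisation de $v$ donne $v_j=i-1$, donc $v_j+1=i$, et les deux membres fournissent la même valeur $\tilde\gamma_i(x_{\sigma(j)})$. Les deux applications coïncident donc en tout $x\in\Delta^n$, ce qui établit la commutativité du diagramme. Le point le plus délicat est l'identification du sous-intervalle contenant $y_j$ et le calcul explicite de la reparamétrisation ; il faut en particulier s'assurer de la cohérence aux bornes $x_{\sigma(j)}\in\{0,1\}$, où $y_j$ tombe sur un point de recollement de la subdivision, mais cette cohérence est automatique puisque $\tilde\gamma$ est un chemin continu, ses valeurs aux points de recollement ne dépendant pas du sous-intervalle choisi pour le calcul.
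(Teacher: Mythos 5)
Votre démonstration est correcte et suit essentiellement la même démarche que celle de l'article : évaluation directe des deux composées en un point de $\Delta^n$, utilisation de l'identité de reparamétrage $\tilde\gamma((i-1+x)/k)=\tilde\gamma_i(x)$ (pour $x\in[0,1]$, $i\in[\![1,k]\!]$) issue de la définition de la composition des chemins, puis identification via la structure par blocs de $v$ et la croissance de $\sigma$ sur chaque bloc. La seule différence — calcul coordonnée par coordonnée chez vous, bloc par bloc dans l'article — est purement rédactionnelle, et votre remarque finale sur les points de recollement est couverte par la cohérence $\tilde\gamma_i(1)=\tilde\gamma_{i+1}(0)$ incluse dans la définition de l'opération $*$.
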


\begin{proof} Soit $((n_1,\ldots,n_k),\sigma)\in\mathrm{Ens}_n^k$ et $(v,\sigma)\in\widetilde{\mathrm{Ens}}_n^k$
son image par \eqref{nllebij:1210}. Posons $\delta_m:=(1,\ldots,1)\in\mathbb R^m$ pour $m\geq1$. 
Alors pour $(x_1,\ldots,x_n)\in\Delta^n$, on a 
\begin{align*}
&\tilde\gamma^{(n)}\circ c(v,\sigma)(x_1,\ldots,x_n)
\\ & \scriptstyle{=
\tilde\gamma^{(n)}\Big((x_{\sigma(1)},\ldots,x_{\sigma(n_1)})/k,(\delta_{n_2}+(x_{\sigma(n_1+1)},\ldots,x_{\sigma(n_1+n_2)}))/k,\ldots,
((k-1)\delta_{n_k}+(x_{\sigma(n_1+\cdots+n_{k-1}+1)},\ldots,x_{\sigma(n)}))/k\Big)}
\\ & 
\scriptstyle{=    
\Big(
\tilde\gamma^{n_1}\Big((x_{\sigma(1)},\ldots,x_{\sigma(n_1)})/k\Big),
\tilde\gamma^{n_2}\Big((\delta_{n_2}+(x_{\sigma(n_1+1)},\ldots,x_{\sigma(n_1+n_2)}))/k\Big),
\ldots,
\tilde\gamma^{n_k}\Big(((k-1)\delta_{n_k}+(x_{\sigma(n_1+\cdots+n_{k-1}+1)},\ldots,x_{\sigma(n)}))/k\Big)
\Big)}
\\ & 
=\Big(
\tilde\gamma_1^{n_1}(x_{\sigma(1)},\ldots,x_{\sigma(n_1)}),
\tilde\gamma_2^{n_2}(x_{\sigma(n_1+1)},\ldots,x_{\sigma(n_1+n_2)})),
\ldots,
\tilde\gamma_k^{n_k}(x_{\sigma(n_1+\cdots+n_{k-1}+1)},\ldots,x_{\sigma(n)}))\Big)
\\ & 
=\Big(
\tilde\gamma_1^{(n_1)}(x_{\sigma(1)},\ldots,x_{\sigma(n_1)}),
\tilde\gamma_2^{(n_2)}(x_{\sigma(n_1+1)},\ldots,x_{\sigma(n_1+n_2)}),
\ldots,
\tilde\gamma_k^{(n_k)}(x_{\sigma(n_1+\cdots+n_{k-1}+1)},\ldots,x_{\sigma(n)})\Big)
\\ & 
=(\tilde\gamma_1^{(n_1)}\times\cdots\times\tilde\gamma_k^{(n_k)})\circ\sigma^*(x_1,\ldots,x_n), 
\end{align*}
où la première égalité suit de ce que $c(v,\sigma)$ est l'application de $\Delta^n$ dans lui-même donnée par 
\begin{align*}
&(x_1,\ldots,x_n)\mapsto 
\\ & 
\Big((x_{\sigma(1)},\ldots,x_{\sigma(n_1)})/k,(\delta_{n_2}+(x_{\sigma(n_1+1)},\ldots,x_{\sigma(n_1+n_2)}))/k,\ldots,
((k-1)\delta_{n_k}+(x_{\sigma(n_1+\cdots+n_{k-1}+1)},\ldots,x_{\sigma(n)}))/k\Big), 
\end{align*}
la deuxième égalité suit de la combinaison du fait que $\tilde\gamma^{(n)}$ est une restriction de 
$\tilde\gamma^{n}$ et de l'égalité $\tilde\gamma^{n}=\tilde\gamma^{n_1}\times\cdots\times\tilde\gamma^{n_k}$, 
la troisième égalité suit de l'identité $\tilde\gamma((i-1+x)/k)
=\tilde\gamma_i(x)$ pour $x\in[0,1]$ et $i\in[\![1,k]\!]$, la quatrième égalité suit des relations 
$(x_{\sigma(1)},\ldots,x_{\sigma(n_1)})\in\Delta^{n_1}$, etc., 
$(x_{\sigma(n_1+\cdots+n_{k-1}+1)},\ldots,x_{\sigma(n)})\in\Delta^{n_k}$, elles-mêmes conséquences de 
$\sigma\in\mathfrak S_{n_1,\ldots,n_k}$, la dernière égalité suit de la définition de $\sigma^*$
(cf. section \ref{calcul:2809}). On a donc $\tilde\gamma^{(n)}\circ c(v,\sigma)=(\tilde\gamma_1^{(n_1)}\times\cdots\times\tilde\gamma_k^{(n_k)})
\circ\sigma^*$ (égalité dans $\mathbf{Top}(\Delta^n,X^n)$).
\end{proof}

\begin{prop}\label{lem:0310}
Si $a_1,\ldots,a_{k+1}\in X$ et $\tilde\gamma_i\in\mathrm{Chem}(a_i,a_{i+1})$ pour $i\in[\![1,k]\!]$. Alors 
$$
(\mathrm{div}_n^k)^*((\tilde\gamma_k*\cdots*\tilde\gamma_1)^{(n)})
=\sum_{\substack{n_1,\ldots,n_k\geq 0| \\ n_1+\cdots+n_k=n}}
\sum_{\sigma\in \mathfrak S_{n_1,\ldots,n_k}}
\epsilon(\sigma)(\tilde\gamma_1^{(n_1)}\times\cdots\times\tilde\gamma_k^{(n_k)})\circ\sigma^* 
$$
(égalité dans $C_n(X^n)=\mathbb Z\mathbf{Top}(\Delta^n,X^n)$). 
\end{prop}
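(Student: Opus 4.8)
The plan is to unfold the left-hand side using only the definition of $\mathrm{div}_n^k$ together with that of the pullback $(-)^*$, and then to recognize the resulting sum as the right-hand side by means of the bijection \eqref{nllebij:1210} and the commutation identity of Lemma \ref{lem:c:1210}. Set $\tilde\gamma:=\tilde\gamma_k*\cdots*\tilde\gamma_1$, a path from $a_1$ to $a_{k+1}$, so that $\tilde\gamma^{(n)}\in\mathbf{Top}(\Delta^n,X^n)$ is defined as in Definition \ref{def:2:1:2311}. Since the assignment $x\mapsto x^*$ is $\mathbb Z$-linear composition at the source (Definition \ref{def:x^*}) and $\mathrm{div}_n^k=\sum_{(v,\sigma)\in\mathrm{Ens}_n^k}\epsilon(\sigma)c_k(v,\sigma)$ (Definition \ref{def:div:n:k}), the first step gives
$$
(\mathrm{div}_n^k)^*(\tilde\gamma^{(n)})=\tilde\gamma^{(n)}\circ\mathrm{div}_n^k=\sum_{(v,\sigma)\in\mathrm{Ens}_n^k}\epsilon(\sigma)\,\tilde\gamma^{(n)}\circ c_k(v,\sigma),
$$
an equality in $C_n(X^n)=\mathbb Z\mathbf{Top}(\Delta^n,X^n)$, where each $c_k(v,\sigma)\in\mathrm{Aff}(\Delta^n,\Delta^n)$ is viewed inside $\mathcal C(n,n)$ via the inclusion $\mathrm{Aff}(\Delta^n,\Delta^n)\subset\mathcal C(n,n)$.

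Next I would reindex this sum along the bijection \eqref{nllebij:1210} of Lemma \ref{*b*1210}, which identifies $\mathrm{Ens}_n^k$ with $\widetilde{\mathrm{Ens}}_n^k$ by sending $(v,\sigma)$ to $((n_1,\ldots,n_k),\sigma)$ with $n_i=|v^{-1}(i-1)|$, leaving $\sigma$ and hence $\epsilon(\sigma)$ unchanged. The sum thus becomes $\sum_{((n_1,\ldots,n_k),\sigma)\in\widetilde{\mathrm{Ens}}_n^k}\epsilon(\sigma)\,\tilde\gamma^{(n)}\circ c_k(v,\sigma)$. Applying the commutative triangle of Lemma \ref{lem:c:1210} to each summand replaces $\tilde\gamma^{(n)}\circ c_k(v,\sigma)$ by $(\tilde\gamma_1^{(n_1)}\times\cdots\times\tilde\gamma_k^{(n_k)})\circ\sigma^*$. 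Finally, writing the sum over $\widetilde{\mathrm{Ens}}_n^k$ explicitly as a double sum over tuples $(n_1,\ldots,n_k)$ with $n_i\geq 0$ and $\sum_i n_i=n$, and over $\sigma\in\mathfrak S_{n_1,\ldots,n_k}$ (which is precisely the definition of $\widetilde{\mathrm{Ens}}_n^k$), yields exactly the right-hand side.

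The proof is therefore a direct assembly of the preceding results and presents no genuine obstacle: all the substance has already been absorbed into Lemma \ref{lem:c:1210}, whose proof carries out the geometric computation of $\tilde\gamma^{(n)}\circ c_k(v,\sigma)$ from the concatenation identity $\tilde\gamma((i-1+x)/k)=\tilde\gamma_i(x)$ and the block structure of $\sigma\in\mathfrak S_{n_1,\ldots,n_k}$, and into the combinatorial bijection \eqref{nllebij:1210}. The only point demanding care is the bookkeeping: one must check that both the sign $\epsilon(\sigma)$ and the index datum $(n_1,\ldots,n_k)$ are transported consistently under \eqref{nllebij:1210}, so that the single unstructured sum over $\mathrm{Ens}_n^k$ matches term by term the structured double sum appearing on the right.
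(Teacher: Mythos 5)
Votre démonstration est correcte et suit essentiellement la même voie que celle de l'article : les deux consistent en l'enchaînement de la définition \ref{def:x^*} (composition à la source), de la définition \ref{def:div:n:k} (développement par linéarité de $\mathrm{div}_n^k$ sur $\mathrm{Ens}_n^k$), puis du lemme \ref{lem:c:1210}, lequel contient déjà la réindexation par la bijection \eqref{nllebij:1210}. Votre rédaction ne fait qu'expliciter cette étape de réindexation, que l'article laisse implicite dans l'invocation du lemme \ref{lem:c:1210}.
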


\begin{proof} On a 
\begin{align*}
&(\mathrm{div}_n^k)^*((\tilde\gamma_k*\cdots*\tilde\gamma_1)^{(n)})
=(\tilde\gamma_k*\cdots*\tilde\gamma_1)^{(n)}\circ \mathrm{div}_n^k
=\sum_{(\sigma,v)\in\mathrm{Ens}_n^k}
\epsilon(\sigma)(\tilde\gamma_k*\cdots*\tilde\gamma_1)^{(n)}\circ c(\sigma,v)
\\ & 
=\sum_{\substack{n_1,\ldots,n_k\geq 0| \\ n_1+\cdots+n_k=n}}
\sum_{\sigma\in \mathfrak S_{n_1,\ldots,n_k}}
\epsilon(\sigma)(\tilde\gamma_1^{(n_1)}\times\cdots\times\tilde\gamma_k^{(n_k)})\circ\sigma^* 
\end{align*}
où la première égalité suit de la déf. \ref{def:x^*}, la deuxième égalité suit de la déf. \ref{def:div:n:k}, et la troisième suit du 
lemme \ref{lem:c:1210}.  
\end{proof}

\subsubsection{Une égalité dans $C_n(X^n)$}

On fixe $\tilde\gamma\in\mathrm{Chem}(a,b)$ et $\alpha_0,\ldots,\alpha_n\in\mathrm{Chem}(a,a)$. 
On note pour $I\subset [\![0,n]\!]$, $c_I:=(\tilde\gamma*\Asterisk_{i\in I}\tilde\alpha_i)^{(n)}\in C_n(X^n)$. 
En appliquant \eqref{132:2809} à $c_I$ ($X$ étant remplacé par $X^n$ et $k$ par $|I|+1$), on trouve 
\begin{equation}\label{interm:2809}
   c_I-(\mathrm{div}_n^{|I|+1})^*(c_I)=
(\partial_{n,n+1}^*\circ (L^{|I|+1}_{n+1,n})^*+(L^{|I|+1}_{n,n-1})^*\circ\partial_{n-1,n}^*)(c_I) 
 \end{equation}
(relation dans $C_n(X^n)$). 
On a $\partial_{n-1,n}^*(c_I)\in C_{n-1}(Y^{(n)}_{ab})$ par le lemme \ref{lem:1:2}, 
le lemme \ref{lem:images:source:2809} implique alors que $(L^{|I|+1}_{n,n-1})^*\circ\partial_{n-1,n}^*(c_I)\in C_n(Y^{(n)}_{ab})$. 
D'autre part, $(L^{|I|+1}_{n+1,n})^*(c_I)\in C_{n+1}(X^n)$, donc $\partial_{n,n+1}^*\circ (L^{|I|+1}_{n+1,n})^*(c_I)\in
\partial_{n,n+1}^*(C_{n+1}(X^n))$. Ces deux relations et \eqref{interm:2809} impliquent 
$$
\forall I\subset [\![0,n]\!],\quad c_I-(\mathrm{div}_n^{|I|+1})^*(c_I)\in C_n(Y^{(n)}_{ab})+\partial_{n,n+1}^*(C_{n+1}(X^n)) 
$$
(relation dans $C_n(X^n)$).
Cette relation implique $\sum_{I\subset [\![0,n]\!]}(-1)^{|I|}(c_I-(\mathrm{div}_n^{|I|+1})^*(c_I))\in 
C_n(Y^{(n)}_{ab})+\partial_{n,n+1}^*(C_{n+1}(X^n))$
donc 
\begin{equation}\label{interm:bis:2809}
  \sum_{I\subset [\![0,n]\!]}(-1)^{|I|}c_I-
\sum_{I\subset [\![0,n]\!]}(-1)^{|I|}(\mathrm{div}_n^{|I|+1})^*(c_I)\in C_n(Y^{(n)}_{ab})+\partial_{n,n+1}^*(C_{n+1}(X^n)). 
 \end{equation}
(relation dans $C_n(X^n)$). 

\begin{lem}\label{lem:2:50:0412}
On a  $\sum_{I\subset [\![0,n]\!]}(-1)^{|I|}(\mathrm{div}_n^{|I|+1})^*(c_I)=0$ (égalité dans $C_n(X^n)$).  
\end{lem}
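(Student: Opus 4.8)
Le plan est de développer chaque terme $(\mathrm{div}_n^{|I|+1})^*(c_I)$ au moyen de la proposition \ref{lem:0310}, appliquée à la décomposition de $c_I$ en ses $|I|+1$ morceaux : les lacets $\tilde\alpha_i$ pour $i\in I$ et le chemin $\tilde\gamma$. Écrivant $I=\{i_1<\cdots<i_j\}$ avec $j=|I|$ et attachant un poids à chaque morceau (noté $n_{i_\ell}$ pour le facteur $\tilde\alpha_{i_\ell}$, et $m$ pour $\tilde\gamma$), la proposition \ref{lem:0310} donne
\begin{equation*}
(\mathrm{div}_n^{|I|+1})^*(c_I)=\sum_{\substack{n_{i_1},\ldots,n_{i_j},m\geq 0\\ n_{i_1}+\cdots+n_{i_j}+m=n}}\ \sum_{\sigma\in\mathfrak S_{n_{i_1},\ldots,n_{i_j},m}}\epsilon(\sigma)\,(\tilde\alpha_{i_1}^{(n_{i_1})}\times\cdots\times\tilde\alpha_{i_j}^{(n_{i_j})}\times\tilde\gamma^{(m)})\circ\sigma^*
\end{equation*}
(à un ordre convenable des facteurs près). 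La somme à annuler se récrit alors comme une somme de chaînes de $C_n(X^n)$ indexée par les données $(I,(n_i)_{i\in I},m,\sigma)$, chacune affectée du signe $(-1)^{|I|}\epsilon(\sigma)$.

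L'observation clé est qu'un facteur de poids nul est invisible. Si $n_i=0$, alors $\tilde\alpha_i^{(0)}$ est l'application de $\Delta^0$ vers $X^0$, qui est un point, et un bloc de taille nulle n'impose aucune contrainte, de sorte que $\mathfrak S_{\ldots,0,\ldots}=\mathfrak S_{\ldots,\widehat 0,\ldots}$ ; la chaîne de $C_n(X^n)$ ainsi que le signe $\epsilon(\sigma)$ associés à une donnée ne dépendent donc que de la donnée réduite obtenue en posant $J:=\{i\in I\mid n_i>0\}$ et en ne conservant que les morceaux de poids strictement positif, le facteur $\tilde\gamma$ restant toujours présent. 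Je regrouperais donc tous les termes selon leur donnée réduite $(J,(n_i)_{i\in J},m,\sigma)$, où $n_i\geq 1$ pour $i\in J$, $m\geq 0$ et $\sum_{i\in J}n_i+m=n$.

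Pour une donnée réduite fixée, les données $(I,\ldots)$ qui lui donnent naissance sont exactement celles avec $I=J\sqcup K$, $K\subset[\![0,n]\!]\setminus J$ arbitraire, et la même chaîne apparaît à chaque fois. Le coefficient total de cette chaîne vaut donc $\sum_{K\subset[\![0,n]\!]\setminus J}(-1)^{|J\sqcup K|}\epsilon(\sigma)=(-1)^{|J|}\epsilon(\sigma)\sum_{K\subset[\![0,n]\!]\setminus J}(-1)^{|K|}$. Or chaque $n_i\geq 1$ pour $i\in J$ entraîne $|J|\leq\sum_{i\in J}n_i=n-m\leq n<n+1=|[\![0,n]\!]|$, donc $[\![0,n]\!]\setminus J\neq\emptyset$ et la somme alternée $\sum_{K\subset[\![0,n]\!]\setminus J}(-1)^{|K|}$ s'annule. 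Tous les coefficients étant nuls, la somme cherchée vaut $0$.

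La principale difficulté que j'anticipe est d'ordre technique plutôt que conceptuel : il s'agit de justifier soigneusement que les facteurs de poids nul disparaissent exactement, c'est-à-dire que le facteur $\tilde\alpha_i^{(0)}$ s'élimine réellement du produit cartésien à valeurs dans $X^n$, que la restriction par $\sigma^*$ se recolle correctement, et que les groupes de battages coïncident après suppression des blocs nuls, afin de garantir que l'égalité des chaînes dans $C_n(X^n)$ est une égalité exacte (et non pas seulement à signe ou à reparamétrage près).
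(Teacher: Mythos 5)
Your proposal is correct and follows essentially the same route as the paper's proof: both expand each term via the proposition \ref{lem:0310}, identify identical chains across different sets $I$ by discarding the zero-weight blocks (which the paper does implicitly, by reindexing over compositions $(\nu_0,\ldots,\nu_{n+1})$ extended by zero outside $I$), and conclude via the vanishing of the alternating sum $\sum_{J\subset I\subset[\![0,n]\!]}(-1)^{|I|}$, valid because the support $J$ can never be all of $[\![0,n]\!]$ when the weights sum to $n$. The zero-block ``invisibility'' you flag as the main technical worry is indeed the one point the paper glosses over, and your justification of it (empty blocks impose no shuffle condition, $X^0$ is a point, $\epsilon(\sigma)$ is block-independent) is sound.
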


\begin{proof}
Pour $\nu_0,\ldots,\nu_{n+1}\geq 0$ avec $\nu_0+\cdots+\nu_{n+1}=n$, posons 
\begin{equation}\label{def:f:nu_i}
f(\nu_0,\ldots,\nu_{n+1}):=\sum_{\sigma\in\mathfrak S_{\nu_0,\ldots,\nu_{n+1}}}\epsilon(\sigma)(\tilde\alpha^{(\nu_0)}\times\cdots\times\tilde\alpha^{(\nu_n)}
    \times\tilde\gamma^{(\nu_{n+1})})\circ\sigma^*\in C_n(X^n). 
\end{equation}
Soit $I\subset [\![0,n]\!]$. Soit $\alpha\mapsto i_\alpha$ l'unique bijection croissante 
$[\![1,|I|]\!]\to I$. On a les égalités  
\begin{align}\label{1211:0310}
    &(\mathrm{div}_n^{|I|+1})^*(c_I)=
    \sum_{\substack{\nu : I\sqcup\{n+1\}\to\mathbb Z_{\geq 0}, \\ \sum_{x\in I\sqcup\{n+1\}}\nu(x)=n}}
    \sum_{\sigma\in \mathfrak S_{\nu(i_1),\ldots,\nu(i_{|I|}),\nu(n+1)}}
    \epsilon(\sigma)(\tilde\alpha_{i_1}^{(\nu(i_1))}\times\cdots\times\tilde\alpha_{i_{|I|}}^{(\nu(i_{|I|}))}
    \times\tilde\gamma^{(\nu(n+1))})\circ\sigma^*\\ & \nonumber 
    = \sum_{\substack{\nu_0,\ldots,\nu_{n+1}\geq 0|\\ \nu_0+\cdots+\nu_{n+1}=n,\\ \nu_{|[\![0,n]\!]-I}=0}}
    \sum_{\sigma\in\mathfrak S_{\nu_0,\ldots,\nu_{n+1}}}\epsilon(\sigma)(\tilde\alpha^{(\nu_0)}\times\cdots\times\tilde\alpha^{(\nu_n)}
    \times\tilde\gamma^{(\nu_{n+1})})\circ\sigma^*
    = \sum_{\substack{\nu_0,\ldots,\nu_{n+1}\geq 0|\\ \nu_0+\cdots+\nu_{n+1}=n,\\ \nu_{|[\![0,n]\!]-I}=0}}
    f(\nu_0,\ldots,\nu_{n+1})
\end{align}
dans $C_n(X^n)$
où la première égalité suit de la proposition \ref{lem:0310}, et la deuxième égalité utilise la bijection entre applications 
$I\sqcup\{n+1\}\to\mathbb Z_{\geq 0}$ et applications $[\![0,n+1]\!]\to\mathbb Z_{\geq 0}$ nulles sur $[\![0,n]\!]-I$
fournie par l'extension par la fonctions nulle, et la troisième égalité suit de \eqref{def:f:nu_i}. 

Alors 
\begin{align}\label{12:33:0310}
    & \nonumber \sum_{I\subset [\![0,n]\!]}(-1)^{|I|}(\mathrm{div}_n^{|I|+1})^*(c_I)
    = \sum_{I\subset [\![0,n]\!]}\sum_{\substack{\nu_0,\ldots,\nu_{n+1}\geq 0|\\ \nu_0+\cdots+\nu_{n+1}=n,\\ \nu_{|[\![0,n]\!]-I}=0}}
    (-1)^{|I|}f(\nu_0,\ldots,\nu_{n+1})
    \\ & \nonumber= \sum_{\substack{\nu_0,\ldots,\nu_{n+1}\geq 0|\\ \nu_0+\cdots+\nu_{n+1}=n}}
    \sum_{\{x\in[\![0,n]\!]|\nu_x\neq 0\}\subset I\subset [\![0,n]\!]}
    (-1)^{|I|}f(\nu_0,\ldots,\nu_{n+1})
    \\ & = \sum_{\substack{\nu_0,\ldots,\nu_{n+1}\geq 0|\\ \nu_0+\cdots+\nu_{n+1}=n}}
    f(\nu_0,\ldots,\nu_{n+1})
    \sum_{\{x\in[\![0,n]\!]|\nu_x\neq 0\}\subset I\subset [\![0,n]\!]}
    (-1)^{|I|}
\end{align}
où la première égalité suit de \eqref{1211:0310}, la deuxième égalité suit de l'équivalence entre les conditions  
$\nu_{|[\![0,n]\!]-I}=0$ et $I\supset\{x\in[\![0,n]\!]|\nu_x\neq 0\}$ et la troisième égalité est une factorisation.  

Pour tout $(\nu_0,\ldots,\nu_{n+1})\in\mathbb Z_{\geq 0}^{n+1}$ tel que $\nu_0+\cdots+\nu_{n+1}=n$, on a
\begin{equation}\label{12:32:0310}
\sum_{\{x\in[\![0,n]\!]|\nu_x\neq 0\}\subset I\subset [\![0,n]\!]}(-1)^{|I|}
=(-1)^{|\{x\in[\![0,n]\!]|\nu_x\neq 0\}|}\sum_{J\subset \{x\in[\![0,n]\!]|\nu_x=0\}}(-1)^{|J|}
=0
\end{equation}
où la première égalité suit de la bijection entre l'ensemble des $I$ tels que  $\{x\in[\![0,n]\!]|\nu_x\neq 0\}\subset I\subset [\![0,n]\!]$
et l'ensemble des parties $J$ de $\{x\in[\![0,n]\!]|\nu_x=0\}$ fournie par $J\mapsto J\cup \{x\in[\![0,n]\!]|\nu_x\neq 0\}$, et la seconde
égalité suit de l'identité $\sum_{X\subset E}(-1)^{|X|}=0$ pour tout ensemble fini non vide $E$, ainsi que de  
$\{x\in[\![0,n]\!]|\nu_x=0\}\neq \emptyset$, qui résulte de $(\nu_0,\ldots,\nu_{n+1})\in\mathbb Z_{\geq 0}^{n+1}$ et  
$\nu_0+\cdots+\nu_{n+1}=n$. 
En combinant \eqref{12:32:0310} et \eqref{12:33:0310}, on obtient $\sum_{I\subset [\![0,n]\!]}(-1)^{|I|}(\mathrm{div}_n^{|I|+1})^*(c_I)=0$. 
\end{proof}

\subsubsection{Démonstration de (b) du théorème \ref{thm:ppal}}

Le lemme \ref{lem:2:50:0412} et \eqref{interm:bis:2809} impliquent 
$$
\sum_{I\subset [\![0,n]\!]}(-1)^{|I|}c_I\in C_n(Y^{(n)}_{ab})+\partial_{n,n+1}^*(C_{n+1}(X^n))
$$
(relation dans $C_n(X^n)$)
c'est-à-dire  
\begin{equation}\label{int:2248:2809}
\sum_{I \subset[\![0,n]\!]}(-1)^{|I|}(\tilde\gamma*\Asterisk_{i\in I}\tilde\alpha_i)^{(n)} \in C_n(Y^{(n)}_{ab})+\partial_{n,n+1}^*(C_{n+1}(X^n))
\end{equation}
(relation dans $C_n(X^n)$). Chaque $c_I=(\tilde\gamma*\Asterisk_{i\in I}\tilde\alpha_i)^{(n)}$ appartient au sous-espace 
$Z_n(X^n,Y^{(n)}_{ab})=\{c\in C_n(X^n)|\partial^*_{n-1,n}(c)\in C_{n-1}(Y^{(n)}_{ab})\}\subset C_n(X^n)$, donc \eqref{int:2248:2809} 
peut être vue comme une relation dans $Z_n(X^n,Y^{(n)}_{ab})$. Elle implique la relation 
\begin{equation}\label{interm:3:222:32:2809}
\sum_{I \subset[\![0,n]\!]}(-1)^{|I|}[(\tilde\gamma*\Asterisk_{i\in I}\tilde\alpha_i)^{(n)}]=0 
\end{equation}
(égalité dans $\mathrm H_n(X^n,Y^{(n)}_{ab})=Z^n(X^n,Y^{(n)}_{ab})/B_n(X^n,Y^{(n)}_{ab})$, avec $B_n(X^n,Y^{(n)}_{ab})
=C_n(Y^{(n)}_{ab})+\partial_{n,n+1}^*(C_{n+1}(X^n))$). 

Pour $I \subset[\![0,n]\!]$, on a 
$$
[(\tilde\gamma*\Asterisk_{i\in I}\tilde\alpha_i)^{(n)}]=
F_n([\tilde\gamma*\asterisk_{i\in I}\tilde\alpha_i])
=F_n(\gamma \cdot \prod_{i\in I}\alpha_i)
$$
(égalité dans $\mathrm H_n(X^n,Y^{(n)}_{ab})$) où la première égalité suit de \eqref{***:2809} et deuxième suit de (\eqref{eq:2106:2505}). 

En combinant cette égalité avec \eqref{interm:3:222:32:2809}, on en déduit l'égalité souhaitée
$$
\sum_{I \subset[\![0,n]\!]}(-1)^{|I|}F_n(\gamma \cdot \prod_{i\in I}\alpha_i)=0
$$
(égalité dans $\mathrm H_n(X^n,Y^{(n)}_{ab})$).

\section{Lien avec l'isomorphisme de Beilinson}\label{sect:3:2512}

Le but de cette section est la démonstration de la proposition \ref{prop:beil:2012}, qui relie l'application $\overline F^{(n)}_{xy}$ obtenue dans 
le théorème \ref{thm:ppal} avec l'isomorphisme \eqref{iso:beil} de Beilinson. On rappelle la construction de cet isomorphisme en 
section \ref{sect:31:2512}, puis on montre la proposition \ref{prop:beil:2012} en section \ref{subsect:3:2:2512}.  

\subsection{Rappels sur l'isomorphisme de Beilinson}\label{sect:31:2512} 

Soit $M$ une variété différentiable connexe, ayant le type d'homotopie d'un CW-complexe fini et $n\geq1$. Dans \cite{DG}, \S3.3
(voir aussi \cite{BGF}, p. 251), on associe à chaque couple $(x,y)$ d'éléments de $M$ un complexe de faisceaux de $\mathbb Q$-espaces 
vectoriels $_y \mathcal K_x\langle n\rangle$ sur $M^n$ et une application linéaire surjective 
$\mathbb H^\bullet(M^n,_x \mathcal K_x\langle n\rangle)\to\mathbb Q$. On a les isomorphismes
de $\mathbb Q$-espaces vectoriels
$$
\mathrm H^\bullet(M^n,Y^{(n)}_{yx};\mathbb Q) \simeq \mathbb H^\bullet(M^n,_y \mathcal K_x\langle n\rangle) \text{ si } x \neq y, 
$$
(\cite{BGF}, deux lignes avant (3.282)) et
$$
\mathrm H^\bullet(M^n,Y^{(n)}_{xx};\mathbb Q) \simeq \mathrm{Ker}(\mathbb H^\bullet(M^n,_x \mathcal K_x\langle n\rangle)\to\mathbb Q) , 
$$
(\cite{BGF}, (3.284)), où $\mathrm H^\bullet(-,-;\mathbb Q)$ désigne l'homologie singulière relative à coefficients dans 
$\mathbb Q$ des paires d'espaces topologiques et $\mathbb H^\bullet$ l'hypercohomologie des complexes de faisceaux. 

Le théorème de Beilinson (Proposition 3.4 de \cite{DG}, ou Theorem 3.298 de \cite{BGF}) dit qu'il y a un isomorphisme
de $\mathbb Q$-espaces vectoriels
$$
\underline\beta_{yx}^{(n)} : \mathbb H^\bullet(M^n,_y \mathcal K_x\langle n\rangle) \to(\mathbb Q\pi_1(x,y)/\mathbb Q\pi_1(x,y)(\mathbb Q\pi_1(x))_+^{n+1})^*
$$
s'insérant pour $y=x$ dans le diagramme commutatif
$$
\xymatrix{\mathbb H^\bullet(M^n,_x \mathcal K_x\langle n\rangle)\ar[rr]\ar[rd]&&
(\mathbb Q\pi_1(x)/(\mathbb Q\pi_1(x))_+^{n+1})^*
\ar[ld]\\&\mathbb Q&}
$$
l'application $(\mathbb Q\pi_1(x)/(\mathbb Q\pi_1(x))_+^{n+1})^*\to\mathbb Q$ étant duale de l'application 
$\mathbb Q\to\mathbb Q\pi_1(x)/(\mathbb Q\pi_1(x))_+^{n+1}$ induite par $1\mapsto 1$. 

On en déduit pour tout $(x,y)$ une application linéaire 
\begin{equation}\label{BEIL:0912}
\beta^{(n)}_{yx} : \mathrm H^\bullet(M^n,Y^{(n)}_{yx};\mathbb Q)\to (\mathbb Q\pi_1(x,y)/\mathbb Q\pi_1(x,y)
(\mathbb Q\pi_1(x))_+^{n+1})^*, 
\end{equation}
qui est un isomorphisme si $y\neq x$, et qui induit un isomorphisme  
$$
\mathrm H^\bullet(M^n,Y^{(n)}_{xx};\mathbb Q)\stackrel{\sim}{\to} \mathrm{Ker}((\mathbb Q\pi_1(x)/
(\mathbb Q\pi_1(x))_+^{n+1})^*\to\mathbb Q)
$$
si $y=x$. 

\subsection{Relation du théorème \ref{thm:ppal} avec l'isomorphisme de Beilinson}\label{subsect:3:2:2512}

Notons $\theta$ l'involution de $M^n$ donnée par $(x_1,\ldots,x_n)\mapsto (x_n,\ldots,x_1)$. L'image de $Y^{(n)}_{yx}$ par cette involution 
est $Y^{(n)}_{xy}$, donc elle induit un isomorphisme $\theta^* : \mathrm H^n(M^n,Y^{(n)}_{yx};\mathbb Q)\to \mathrm H^n(M^n,Y^{(n)}_{xy};
\mathbb Q)$. Rappelons le couplage entre homologie et cohomologie relatives, qui induit une application linéaire 
$\mathrm{can} : \mathrm H^n(M^n,Y^{(n)}_{xy};\mathbb Q)\to \mathrm H_n(M^n,Y^{(n)}_{xy})_{\mathbb Q}^*$, où pour
$A$ un $\mathbb Z$-module, on note $A_{\mathbb Q}^*:=\mathrm{Hom}_{\mathbb Z}(A,\mathbb Q)$. Pour $f : A\to B$ morphisme de $\mathbb Z$-modules, 
on note aussi $f_{\mathbb Q}^* : B_{\mathbb Q}^*\to A_{\mathbb Q}^*$ le morphisme induit. 

\begin{prop}\label{prop:beil:2012}
L'application linéaire $\beta^{(n)}_{yx}$ (cf. \eqref{BEIL:0912}) est au signe près, la composée du dual $(\overline F^{(n)}_{xy})_{\mathbb Q}^*$ 
de l'application linéaire $\overline F^{(n)}_{xy}$ (cf. théorème \ref{thm:ppal}), de $\mathrm{can}$ et de $\theta^*$. Précisément, on a  
$$
\beta^{(n)}_{yx}=(-1)^{n+1}(\overline F^{(n)}_{xy})_{\mathbb Q}^*\circ \mathrm{can}\circ\theta^*. 
$$
\end{prop}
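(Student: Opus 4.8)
The plan is to prove the asserted identity of linear maps by testing both sides on generators. Both $\beta^{(n)}_{yx}$ and the composite $(-1)^{n+1}(\overline F^{(n)}_{xy})_{\mathbb Q}^*\circ\mathrm{can}\circ\theta^*$ are $\mathbb Q$-linear maps from $\mathrm H^n(M^n,Y^{(n)}_{yx};\mathbb Q)$ to $(\mathbb Q\pi_1(x,y)/\mathbb Q\pi_1(x,y)(\mathbb Q\pi_1(x))_+^{n+1})^*$. Writing $\langle-,-\rangle$ both for the evaluation pairing of relative cohomology against relative homology and for the duality pairing of a $\mathbb Q$-vector space with its dual, it suffices to compare the two functionals after pairing with a class $[\tilde\gamma]$, for $\omega\in\mathrm H^n(M^n,Y^{(n)}_{yx};\mathbb Q)$ and $\tilde\gamma\in\mathrm{Chem}(x,y)$ arbitrary, since the classes $[\tilde\gamma]$ span the source quotient over $\mathbb Q$.

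First I would unwind the right-hand side. By definition of the transpose $(\overline F^{(n)}_{xy})_{\mathbb Q}^*$ and of $\mathrm{can}$, and by naturality of the cohomology--homology pairing under the morphism of pairs $\theta:(M^n,Y^{(n)}_{xy})\to(M^n,Y^{(n)}_{yx})$ (which gives $\langle\theta^*\omega,c\rangle=\langle\omega,\theta_*c\rangle$), one obtains
\[
\big\langle\big((\overline F^{(n)}_{xy})_{\mathbb Q}^*\circ\mathrm{can}\circ\theta^*\big)(\omega),[\tilde\gamma]\big\rangle
=\big\langle\theta^*\omega,\overline F^{(n)}_{xy}([\tilde\gamma])\big\rangle
=\big\langle\omega,\theta_*F_n([\tilde\gamma])\big\rangle .
\]
By théorème \ref{thm:ppal}(a), $\overline F^{(n)}_{xy}([\tilde\gamma])=F_n([\tilde\gamma])=[\tilde\gamma^{(n)}]$, so $\theta_*F_n([\tilde\gamma])=[\theta\circ\tilde\gamma^{(n)}]$ is represented by the singular $n$-simplex $(t_1,\dots,t_n)\mapsto(\tilde\gamma(t_n),\dots,\tilde\gamma(t_1))$. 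A computation on the faces of $\Delta^n$, identical to the one in lemme \ref{lem:1:2} after applying $\theta$, shows that this reverse-ordered path simplex is a relative cycle for $Y^{(n)}_{yx}$ (its faces $t_n=1$, $t_1=0$ and $t_i=t_{i+1}$ land respectively in $Y_{yx,0}^{(n)}$, $Y_{yx,n}^{(n)}$ and $Y_{yx,n-i}^{(n)}$). Thus the whole proposition is reduced to the single identity
\[
\big\langle\beta^{(n)}_{yx}(\omega),[\tilde\gamma]\big\rangle=(-1)^{n+1}\big\langle\omega,[\theta\circ\tilde\gamma^{(n)}]\big\rangle .
\]

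To establish this last identity I would trace the definition of $\beta^{(n)}_{yx}$ through the composition defining \eqref{iso:beil}: first transport $\omega$ to $\mathbb H^n(M^n,{}_y\mathcal{K}_x\langle n\rangle)$ via $(iso_{\mathrm{BGF}}^{yx})^{-1}$, then apply the Beilinson isomorphism $\underline\beta_{yx}^{(n)}$. On a de Rham (or singular-cochain) model for ${}_y\mathcal{K}_x\langle n\rangle$, the resulting functional evaluates a representative of $\omega$ on the image of the ordered simplex $\Delta^n$ under $\tilde\gamma$; by the comparison between iterated integrals and integration over the path simplex, this evaluation is, up to the normalization fixed in \cite{DG,BGF}, exactly $\langle\omega,[\theta\circ\tilde\gamma^{(n)}]\rangle$. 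Carrying out this identification yields the displayed identity together with its constant.

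The hard part will be the determination of the sign $(-1)^{n+1}$. Several contributions must be reconciled: the orientation of the path simplex $\Delta^n$, the effect of the coordinate-reversing involution $\theta$ (whose underlying permutation $(1,\dots,n)\mapsto(n,\dots,1)$ has signature $(-1)^{n(n-1)/2}$), the reparametrization $t\mapsto1-t$ of the path, and above all the normalizations built into $iso_{\mathrm{BGF}}^{yx}$ and $\underline\beta_{yx}^{(n)}$ in \cite{DG,BGF}. I expect these to collapse, uniformly in $\omega$ and $\tilde\gamma$, to the single global sign $(-1)^{n+1}$; verifying this collapse by careful bookkeeping at the (co)chain level is the technical core of the argument, the reduction steps above being purely formal.
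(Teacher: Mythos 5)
Your formal reduction coincides with the paper's first step: both sides are $\mathbb Q$-linear, so the proposition is equivalent to the single pairing identity
$\langle\beta^{(n)}_{yx}(\omega),[\gamma]\rangle=(-1)^{n+1}\langle\omega,\theta_*[\tilde\gamma^{(n)}]\rangle$
tested on classes $[\gamma]$, $\gamma\in\pi_1(x,y)$; this is exactly \eqref{a:montrer:beil}, written with $(\theta^*)^{-1}$ moved to the other side, and your use of théorème \ref{thm:ppal}(a) and of the naturality $\langle\theta^*\omega,c\rangle=\langle\omega,\theta_*c\rangle$ is correct. Up to that point the argument is sound, but it is also purely formal, as you yourself note.

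The gap is that you stop precisely where the proof begins. Your treatment of the left-hand side --- evaluating $\beta^{(n)}_{yx}$ via a de Rham/iterated-integral picture, ``up to the normalization fixed in \cite{DG,BGF}'', and then ``I expect these to collapse \dots\ to the single global sign $(-1)^{n+1}$'' --- asserts the proposition rather than proving it: the whole content of the statement is the determination of that constant, and it cannot be obtained without an explicit chain-level model of $\underline\beta^{(n)}_{yx}$. The paper supplies exactly this: (i) it replaces the hypercohomology by the cohomology of the total complex of the explicit bicomplexes $\tilde{\mathbf C}$, $\mathbf C$ of \cite{BGF}, via the commutative square \eqref{comm:2012}; (ii) it invokes the explicit formula \eqref{pairing:BGF} (BGF (3.292)), which writes $\langle\underline\beta^{(n)}_{yx}\circ i_{\mathbf C}([\omega]),[\gamma]\rangle$ as a signed sum over all nonempty $I\subset[\![1,n]\!]$ of evaluations $\omega_I(\tilde\gamma^{(I)})$; (iii) it constructs the chain map $\tilde\mu^\bullet$ of \eqref{MORCOMPL}, placing a relative cocycle for $(M^n,Y^{(n)}_{xy})$ in the $I=[\![1,n]\!]$ slot, and proves \eqref{STEP:2012}, identifying $\mathrm H^\bullet(\tilde\mu^\bullet)$ with $(\tilde i_{\mathbf C})^{-1}\circ(iso_{\mathrm{BGF}})^{-1}\circ(\theta^*)^{-1}$. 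Only then does the computation close: a class coming from $\tilde\mu$ has a single nonzero component, so only the term $I=[\![1,n]\!]$ survives in \eqref{pairing:BGF}, and the sign is read off as $(-1)^{(n-1)(n-2)/2+n^2+n(n+1)/2}=(-1)^{n+1}$. Note in particular that the involution $\theta$ is absorbed into \eqref{STEP:2012} and contributes no signature, and no reparametrization $t\mapsto 1-t$ ever appears; so the sign inventory you propose (signature $(-1)^{n(n-1)/2}$ of the reversal, path reversal, orientation of $\Delta^n$) is not the bookkeeping that actually occurs, and an iterated-integral comparison is not the mechanism of \cite{BGF}'s construction (which uses singular cochains on the products $X^I$), so appealing to it would itself require an additional unproved compatibility.
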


\begin{proof}
Dans \cite{BGF}, on construit un morphisme de complexes de faisceaux $\mathrm{nat} : _y \tilde{\mathcal K}_x\langle n\rangle
\to _y \mathcal K_x\langle n\rangle$ ((3.282) et cinq lignes avant cette équation) ; si $y\neq x$, ce morphisme est l'identité de 
$_y \mathcal K_x\langle n\rangle$ (cf. {\it loc. cit.,} 6 lignes avant (3.282)). Dans {\it loc. cit.,} on construit un isomorphisme 
d'espaces vectoriels $iso_{\mathrm{BGF}}^{yx} : \mathbb H^n(M^n,_y \tilde{\mathcal K}_x\langle n\rangle)
\to \mathrm H^n(M^n,Y^{(n)}_{yx};\mathbb Q)$ (Lemma 3.281). L'application \eqref{BEIL:0912}
est alors donnée par la composition 
$$
\mathrm H^n(M^n,Y^{(n)}_{yx};\mathbb Q)
\stackrel{(iso_{\mathrm{BGF}}^{yx})^{-1}}{\to}
\mathbb H^n(M^n,_y \tilde{\mathcal K}_x\langle n\rangle)
\to\mathbb H^n(M^n,_y \mathcal K_x\langle n\rangle)
\stackrel{\underline\beta^{(n)}_{yx}}{\to}
(\mathbb Q\pi_1(x,y)/\mathbb Q\pi_1(x,y)(\mathbb Q\pi_1(x))_+^{n+1})^*
$$
L'énoncé est alors équivalent à la commutativité du diagramme suivant 
$$
\xymatrix{
\mathbb H^n(M^n,_y \tilde{\mathcal K}_x\langle n\rangle)
\ar^{\mathrm{nat}^*}[r]\ar_{iso_{\mathrm{BGF}}^{yx}}[d]&
\mathbb H^n(M^n,_y \mathcal K_x\langle n\rangle)
\ar^{\!\!\!\!\!\!\!\!\!\!\!\!\!\!\!\!\!\!\!\!\!\!\!\underline\beta^{(n)}_{yx}}[r]&
(\mathbb Q\pi_1(x,y)/\mathbb Q\pi_1(x,y)(\mathbb Q\pi_1(x))_+^{n+1})^*
\\
\mathrm H^n(M^n,Y^{(n)}_{yx};\mathbb Q)\ar_{\theta^*}[r]&
\mathrm H^n(M^n,Y^{(n)}_{xy};\mathbb Q)\ar_{(-1)^{n+1}\mathrm{can}}[r]&
\mathrm H_n(M^n,Y^{(n)}_{xy})_{\mathbb Q}^*\ar_{(\overline F^{(n)}_{xy})^*}[u]}
$$
laquelle est équivalente à l'énoncé suivant : 
\begin{align}\label{a:montrer:beil}
    & \forall c\in\mathrm H^n(M^n,Y^{(n)}_{xy};\mathbb Q),\quad \forall a\in
    \mathbb Q\pi_1(x,y)/(\mathbb Q\pi_1(x,y)(\mathbb Q\pi_1(x))_+^{n+1}),
    \\ & \nonumber \langle\underline\beta^{(n)}_{yx}\circ\mathrm{nat}^*\circ (iso_{\mathrm{BGF}}^{yx})^{-1}
    \circ(\theta^*)^{-1}(c),a
    \rangle%_{(\mathbb Q\pi_1(x,y)/\mathbb Q\pi_1(x,y)(\mathbb Q\pi_1(x))_+^{n+1})^*\times
    %\mathbb Q\pi_1(x,y)/(\mathbb Q\pi_1(x,y)(\mathbb Q\pi_1(x))_+^{n+1})}
    =(-1)^{n+1}\langle c,\overline F^{(n)}_{xy}(a)\rangle_{hom}, 
    %{\mathrm H^n(M^n,Y^{(n)}_{xy})\times\mathrm H_n(M^n,Y^{(n)}_{xy})} 
\end{align}
où $\langle-,-\rangle$ est le couplage $V^*\times V\to\mathbb Q$ associé à un $\mathbb Q$-espace vectoriel $V$
et $\langle-,-\rangle_{hom}$ est le couplage naturel $\mathrm H^n(X,Y;\mathbb Q)\times\mathrm H_n(X,Y)
\to\mathbb Q$, que par linéarité il suffit de vérifier pour $a=[\gamma]$, où $\gamma\in\pi_1(x,y)$. 

Rappelons quelques constructions de \cite{BGF}. Soit $\tilde{\mathbf C}:=((\tilde C^{p,q})_{p,q\geq0},d',d'')$  
le bicomplexe tel que $\tilde C^{p,q}:=\oplus_{I \subset [\![1,n]\!]||I|=n-p}C^q(X^I)$, où 
$C^q(X):=\mathrm{Hom}_{\mathbb Q}(C_q(X),\mathbb Q)$, où $d''$ est la somme sur $I,q$ des opérateurs de cobord  
$C^q(X^I)\to C^{q+1}(X^I)$ et où $d''$ est la somme sur les couples $(I,J)$ avec $I\supset J$ et $|I-J|=1$ 
des applications $\epsilon(I,J)\delta_{I,J}^* : C^q(X^I)\to C^q(X^J)$, où $\delta_{I,J} : X^J\to X^I$ 
est le morphisme donné par \cite{BGF}, formule après (3.287) et $\epsilon(I,J) \in \{\pm1\}$ est donné par \cite{BGF}, 
formule avant (3.278). Soit $\mathbf C:=((C^{p,q})_{p,q\geq 0},d',d'')$ le bicomplexe quotient de $\tilde{\mathbf C}$  
donné par $C^{p,q}=\tilde C^{p,q}$ si $q<n$, $C^{p,n}=0$ pour tout $p\geq0$. 

D'après \cite{BGF}, Lemma 3.289, on a un isomorphisme $\mathrm H^\bullet(\mathrm{Tot}(\mathbf C))\simeq 
\mathbb H^n(M^n,_y \mathcal K_x\langle n\rangle)$. On montre que cet isomorphisme s'insère dans carré commutatif 
dont les morphismes verticaux sont des isomorphismes
\begin{equation}\label{comm:2012}
\xymatrix{\mathrm H^\bullet(\mathrm{Tot}\tilde{\mathbf C})\ar^{\mathbf{nat}}[r]\ar^{\sim}_{\tilde i_{\mathbb C}}[d]&
\mathrm H^\bullet(\mathrm{Tot}\mathbf C)\ar_{\sim}^{i_{\mathbb C}}[d]\\
\mathbb H^n(M^n,_y \tilde{\mathcal K}_x\langle n\rangle)
\ar_{\mathrm{nat}^*}[r]&
\mathbb H^n(M^n,_y \mathcal K_x\langle n\rangle)}
\end{equation}
Alors l'application composée
$$
\mathrm H^n(\mathrm{Tot}\mathbf C)
\stackrel{i_{\mathbf C}}{\to} \mathbb H^n(M^n,_y \mathcal K_x\langle n\rangle)
\stackrel{\underline\beta^{(n)}_{yx}}{\to}
(\mathbb Q\pi_1(x,y)/\mathbb Q\pi_1(x,y)(\mathbb Q\pi_1(x))_+^{n+1})^*
$$
est telle que pour $\omega=(\omega_I)_{\emptyset\neq I\subset[\![1,n]\!]}\in\mathrm Z^n(\mathrm{Tot}\mathbf C)$
avec $\omega_I\in C^{n-|I|}(X^I)$, et $\gamma\in \pi_1(x,y)$, on a
\begin{equation}\label{pairing:BGF}
\langle\underline\beta_{yx}^{(n)}\circ i_{\mathbf C}([\omega]),[\gamma]
\rangle
=\sum_{\emptyset\neq I\subset[\![1,n]\!]}(-1)^{(1/2)(|I|-1)(|I|-2)+n|I|}
\epsilon(I)
\omega_I(\tilde\gamma^{(I)}), 
\end{equation}
(cf. \cite{BGF}, (3.292)), où $\epsilon(I)$ est donné par \cite{BGF}, (3.278) et 
$\gamma^{(I)}\in \mathbf{Top}(\Delta^{|I|},M^I)\subset C_{|I|}(M^I)$ donné par 
$\Delta^{|I|}\ni (t_1,\ldots,t_{|I|})\mapsto (I\ni i\mapsto \tilde\gamma(t_{\kappa(i)}))\in M^I$, avec 
$\kappa$ l'unique bijection croissante $I\to[\![1,|I|]\!]$. 

Pour tout $i \in [\![1,n]\!]$ et $p \geq 0$, l'application 
$\delta^*_{[\![1,n]\!],[\![1,n]\!]-\{i\}} : C^p(X^n)\to C^p(X^{[\![1,n]\!]-\!{i\!}})$ est une composition 
$C^p(X^n)\to C^p(Y^{(n)}_{xy})\to C^p(X^{[\![1,n]\!]-\{i\}})$ donc 
$\mathrm{Ker}(C^p(X^n)\to C^p(Y^{(n)}_{xy})) \subset \mathrm{Ker}(\oplus_{i\in[\![1,n]\!]}: \delta^*_{[\![1,n]\!],[\![1,n]\!]-\{i\}} : 
C^p(X^n)\to \oplus_{i \in [\![1,n]\!]} C^p(X^{[\![1,n]\!]-\{i\}}))$. 

On a donc pour chaque $p \geq 0$ une application linéaire $\tilde\mu^p : \mathrm{Ker}(C^p(X^n)\to C^p(Y^{(n)}_{xy}))
\to \mathrm{Tot}^p(\tilde{\mathbf C})$ donnée par $\mathrm{Ker}(C^p(X^n)\to C^p(Y^{(n)}_{xy})) \ni c\mapsto ((p,0)\mapsto c, 
(p,0)\neq (p,'q')\mapsto 0)$, qui définit un morphisme de complexes 
\begin{equation}\label{MORCOMPL} 
\tilde\mu^\bullet : \mathrm{Ker}(C^\bullet(X^n)\to C^\bullet(Y^{(n)}_{xy}))\to \mathrm{Tot}^\bullet(\tilde{\mathbf C}).
\end{equation}
La cohomologie du complexe source de \eqref{MORCOMPL} est la cohomologie singulière relative
$\mathrm H^\bullet(X^n,Y^{(n)}_{xy};\mathbb Q)$. On déduit du morphisme de complexes \eqref{MORCOMPL} un morphisme en cohomologie  
$$
\mathrm H^\bullet(\tilde\mu^\bullet) : \mathrm H^\bullet(X^n,Y^{(n)}_{xy};\mathbb Q)\to \mathrm H^\bullet(\mathrm{Tot}(\tilde{\mathbf C}))
$$
dont on vérifie qu'il satisfait 
\begin{equation}\label{STEP:2012}
    \mathrm H^\bullet(\tilde\mu^\bullet)
    =(\tilde i_{\mathbf C})^{-1} \circ (iso_{\mathrm{BGF}})^{-1} \circ (\theta^*)^{-1}.
\end{equation}

Montrons alors \eqref{a:montrer:beil}. Soit $\gamma\in\pi_1(x,y)$, $c\in \mathrm H^n(M^n,Y^{(n)}_{xy};\mathbb Q)$. 
Soit $\tilde c\in \mathrm Z^n(\mathrm{Ker}(C^\bullet(X^n)\to C^\bullet(Y^{(n)}_{xy})))$ un représentant de $c$. 
Alors 
\begin{align*}
    & \langle\underline\beta^{(n)}_{yx}\circ\mathrm{nat}^*\circ (iso_{\mathrm{BGF}}^{yx})^{-1}
    \circ(\theta^*)^{-1}(c),[\gamma]
    \rangle
    =\langle\underline\beta^{(n)}_{yx}\circ i_{\mathbf C}\circ \mathbf{nat}\circ \tilde i_{\mathbf C}^{-1}
    \circ (iso_{\mathrm{BGF}}^{yx})^{-1}
    \circ(\theta^*)^{-1}(c),[\gamma]
    \rangle
    \\ & =\langle\underline\beta^{(n)}_{yx}\circ i_{\mathbf C}\circ \mathbf{nat}\circ 
    \mathrm H^n(\tilde\mu^\bullet)(c),[\gamma]
    \rangle
   =\langle\underline\beta^{(n)}_{yx}\circ i_{\mathbf C}\circ \mathbf{nat}\circ 
    \mathrm [\tilde\mu^n(\tilde c)],[\gamma]
    \rangle
   =\langle\underline\beta^{(n)}_{yx}\circ i_{\mathbf C}\circ  
    \mathrm [\mu^n(\tilde c)],[\gamma]
    \rangle
    \\ & =(-1)^{(n-1)(n-2)/2+n^2+n(n+1)/2} \tilde c(\tilde\gamma^{(n)})
    =(-1)^{n+1}\langle c,\overline F^{(n)}_{xy}([\gamma])
    \rangle_{hom}
\end{align*}
où $\mu^n$ est la composée de $\tilde\mu^n$ et de la projection $\tilde{\mathbf C}^n\to\mathbf C^n$ ; 
la première égalité suit de la commutativité de \eqref{comm:2012}, la deuxième égalité suite de \eqref{STEP:2012}, 
la troisième égalité suit de la définition de $\mathrm H^n(\tilde\mu^\bullet)$, la quatrième égalité suit de ce que
$\mathbf{nat}$ est la version cohomologiquee de la projection $\tilde{\mathbf C}^n\to\mathbf C^n$, la cinquième égalité suit de 
l'égalité \eqref{pairing:BGF}, dans laquelle seule la contribution de $I=[\![1,n]\!]$ est non-triviale, la dernière égalité suit 
de \eqref{***:2809}.  
\end{proof}

La démonstration de la proposition \ref{prop:beil:2012} est illustrée par le diagramme suivant
$$
\xymatrix{
\mathrm H^\bullet(\mathrm{Tot}\tilde{\mathbf C})\ar^{\mathbf{nat}}[r]\ar^{\sim}_{\tilde i_{\mathbb C}}[d]&
\mathrm H^\bullet(\mathrm{Tot}\mathbf C)\ar_{\sim}^{i_{\mathbb C}}[d]\ar^{\underline\beta^{(n)}_{yx}\circ i_{\mathbb C}}[rd]&\\
\mathbb H^n(M^n,_y \tilde{\mathcal K}_x\langle n\rangle)
\ar^{\mathrm{nat}^*}[r]\ar_{iso_{\mathrm{BGF}}^{yx}}[d]&
\mathbb H^n(M^n,_y \mathcal K_x\langle n\rangle)
\ar^{\!\!\!\!\!\!\!\!\!\!\!\!\!\!\!\!\!\!\!\underline\beta^{(n)}_{yx}}[r]&
(\mathbb Q\pi_1(x,y)/\mathbb Q\pi_1(x,y)(\mathbb Q\pi_1(x))_+^{n+1})^*
\\
\mathrm H^n(M^n,Y^{(n)}_{yx};\mathbb Q)\ar_{\theta^*}[r]&
\mathrm H^n(M^n,Y^{(n)}_{xy};\mathbb Q)\ar_{\mathrm{can}}[r]&
\mathrm H_n(M^n,Y^{(n)}_{xy})_{\mathbb Q}^*\ar_{(\overline F^{(n)}_{xy})^*}[u]}
$$

\section{Construction de transformations naturelles}\label{sect:4:2512}

%\subsection{Groupoïde fondamental}

On note $\mathbf{Top}_2$ la catégorie des espaces topologiques munis d’un couple de points marqués 
(i.e. d'un couple de morphismes de source l'objet initial $*$). Pour $(X,a,b)$ un objet de $\mathbf{Top}_2$, 
le couple $(\pi_1(a,b),\pi_1(a))$ est un torseur à droite, à savoir un couple $(T,G)$ avec $T$ un ensemble et $G$ un groupe, 
munis d'une action à droite libre et transtive de $G$ sur $T$. La correspondance $(X,a,b)\mapsto (\pi_1(a,b),\pi_1(a))$ définit 
un foncteur $\mathbf{Top}_2\to\mathbf{TorDt}$ avec $\mathbf{TorDt}$ la catégorie des torseurs à droite. 

%\subsection{Les foncteurs $\mathbf F_n : \mathbf{Top}_2\to\mathbf{Ab}$, $n\geq0$}

\begin{defn}
Pour $(X,x,y)$ un objet de $\mathbf{Top}_2$, on note $\mathbb Z\pi_1(X,x,y)$ le $\mathbb Z$-module libre sur  
$\pi_1(X,x,y)$.
\end{defn}

\begin{lem}
L'application $(X,a,b)\mapsto \mathbf F_n(X,a,b)$ 
définit un foncteur covariant de $\mathbf{Top}_2$ vers la catégorie $\mathbf{Ab}$ des groupes abéliens. 
\end{lem}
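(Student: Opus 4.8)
The plan is to define the functor on morphisms and then verify that it respects the defining quotient and is compatible with composition. Recall that just above, the correspondence $(X,a,b)\mapsto(\pi_1(a,b),\pi_1(a))$ has already been promoted to a functor $\mathbf{Top}_2\to\mathbf{TorDt}$. Given a morphism $f:(X,a,b)\to(X',a',b')$ in $\mathbf{Top}_2$, i.e. a continuous map $f:X\to X'$ with $f(a)=a'$ and $f(b)=b'$, this functor furnishes a group homomorphism $\phi_f:\pi_1(a)\to\pi_1(a')$ (namely $[\alpha]\mapsto[f\circ\alpha]$) together with a $\phi_f$-equivariant map $\psi_f:\pi_1(a,b)\to\pi_1(a',b')$ (namely $[\gamma]\mapsto[f\circ\gamma]$). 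First I would linearize: $\phi_f$ extends to a homomorphism of group algebras $\mathbb Z\phi_f:\mathbb Z\pi_1(a)\to\mathbb Z\pi_1(a')$, and $\psi_f$ extends to a $\mathbb Z$-linear map $\mathbb Z\psi_f:\mathbb Z\pi_1(a,b)\to\mathbb Z\pi_1(a',b')$; the $\phi_f$-equivariance of $\psi_f$ becomes the semilinearity relation $\mathbb Z\psi_f(m\cdot r)=\mathbb Z\psi_f(m)\cdot\mathbb Z\phi_f(r)$ for all $m\in\mathbb Z\pi_1(a,b)$ and $r\in\mathbb Z\pi_1(a)$.

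The crucial step is to check that $\mathbb Z\psi_f$ sends the submodule $\mathbb Z\pi_1(a,b)\cdot(\mathbb Z\pi_1(a))_+^{n+1}$ into $\mathbb Z\pi_1(a',b')\cdot(\mathbb Z\pi_1(a'))_+^{n+1}$. Since $\phi_f$ carries group elements to group elements, every generator $[\alpha]$ of $\mathbb Z\pi_1(a)$ satisfies $\varepsilon'(\mathbb Z\phi_f([\alpha]))=1=\varepsilon([\alpha])$, where $\varepsilon,\varepsilon'$ denote the augmentations; hence $\mathbb Z\phi_f$ is augmentation-compatible, maps $(\mathbb Z\pi_1(a))_+$ into $(\mathbb Z\pi_1(a'))_+$, and therefore $(\mathbb Z\pi_1(a))_+^{n+1}$ into $(\mathbb Z\pi_1(a'))_+^{n+1}$. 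Combining this with the semilinearity relation yields $\mathbb Z\psi_f(\mathbb Z\pi_1(a,b)\cdot(\mathbb Z\pi_1(a))_+^{n+1})\subset\mathbb Z\pi_1(a',b')\cdot(\mathbb Z\pi_1(a'))_+^{n+1}$, which is exactly the needed containment.

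From this containment it follows that $\mathbb Z\psi_f$ descends to a $\mathbb Z$-linear map $\mathbf F_n(f):\mathbf F_n(X,a,b)\to\mathbf F_n(X',a',b')$, and I would take this as the value of the functor on $f$. Finally I would verify the two functoriality axioms: $\mathbf F_n(\mathrm{id})=\mathrm{id}$ and $\mathbf F_n(g\circ f)=\mathbf F_n(g)\circ\mathbf F_n(f)$. Both reduce, before passing to quotients, to the identities $\mathbb Z\psi_{\mathrm{id}}=\mathrm{id}$ and $\mathbb Z\psi_{g\circ f}=\mathbb Z\psi_g\circ\mathbb Z\psi_f$, which are immediate from the functoriality of $(X,a,b)\mapsto(\pi_1(a,b),\pi_1(a))$ established above together with the functoriality of the free-module construction. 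I do not expect a serious obstacle: the only point requiring genuine attention is the augmentation-compatibility that feeds into the preservation of the defining submodule, and that is a one-line consequence of $\phi_f$ being a homomorphism of groups.
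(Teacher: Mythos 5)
Your proof is correct and follows essentially the same route as the paper: the paper simply observes that $\mathbf F_n$ is the composition of the functor $\mathbf{Top}_2\to\mathbf{TorDt}$, $(X,a,b)\mapsto(\pi_1(a,b),\pi_1(a))$, with the functor $\mathbf{TorDt}\to\mathbf{Ab}$, $(T,G)\mapsto \mathbb ZT/(\mathbb ZT)(\mathbb ZG)_+^{n+1}$. What you do is spell out the verification the paper leaves implicit — that this second assignment is indeed functorial (augmentation compatibility, preservation of the submodule, descent to the quotient, and the two functor axioms) — which is a faithful unpacking rather than a different argument.
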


\begin{proof}
Il s'agit de la composition du foncteur $\mathbf{Top}_2\to\mathbf{TorDt}$ envoyant 
$(X,a,b)$ vers $(\pi_1(a,b), \pi_1(a))$ et du foncteur $\mathbf{TorDt}\to\mathbf{Ab}$ envoyant $(T,G)$
vers $\mathbb ZT/(\mathbb ZT)(\mathbb ZG)_+^{n+1}$.
\end{proof}

%\subsection{Les foncteurs $\mathbf H_n : \mathbf{Top}_2\to\mathbf{Ab}$, $n\geq0$}

\begin{defn}
Pour $\underline X=(X,a,b)$ un objet de $\mathbf{Top}_2$, on note $Y_{\underline X}(n):=Y_{ab}^{(n)}$ 
(cf. \eqref{ref:Y:2212}). 
\end{defn}

\begin{lem}
L'application $(X,a,b)\mapsto (X^n,Y_{\underline X}(n))$ est un foncteur covariant $\mathbf{Top}_2\to\mathbf{Paires}$. 
\end{lem}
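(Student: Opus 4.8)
Le plan est de montrer que l'application considérée s'obtient par restriction du foncteur puissance $n$-ième $(-)^n : \mathbf{Top}\to\mathbf{Top}$, qui à un espace $X$ associe $X^n$ et à une application continue $f : X\to X'$ associe $f^n := f\times\cdots\times f : X^n\to X'^n$. La fonctorialité de $(-)^n$ (à savoir $(\mathrm{id}_X)^n=\mathrm{id}_{X^n}$ et $(g\circ f)^n=g^n\circ f^n$) étant immédiate, le seul point à établir est que, pour tout morphisme $f : (X,a,b)\to(X',a',b')$ de $\mathbf{Top}_2$ (i.e. $f$ continue avec $f(a)=a'$ et $f(b)=b'$), l'application $f^n$ définit bien un morphisme de paires $(X^n,Y_{ab}^{(n)})\to(X'^n,Y_{a'b'}^{(n)})$, c'est-à-dire que $f^n(Y_{ab}^{(n)})\subset Y_{a'b'}^{(n)}$.

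Pour cela, je raisonnerais par disjonction selon l'indice $i\in[\![0,n]\!]$ intervenant dans la définition \eqref{ref:Y:2212} de $Y_{ab}^{(n)}=\cup_i Y_{ab,i}^{(n)}$. Soit $(x_1,\ldots,x_n)\in Y_{ab,i}^{(n)}$, de sorte que $x_i=x_{i+1}$ avec les conventions $x_0=a$, $x_{n+1}=b$ ; on pose $x'_j:=f(x_j)$ pour $j\in[\![1,n]\!]$, et $x'_0:=a'$, $x'_{n+1}:=b'$. Pour $i\in[\![1,n-1]\!]$, l'égalité $x_i=x_{i+1}$ entraîne $x'_i=f(x_i)=f(x_{i+1})=x'_{i+1}$ ; ces cas n'utilisent que l'aspect ensembliste de $f$. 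Les cas de bord $i=0$ et $i=n$ sont ceux où intervient réellement la structure de $\mathbf{Top}_2$ : pour $i=0$, la condition $x_1=x_0=a$ donne $x'_1=f(a)=a'=x'_0$ grâce à la relation $f(a)=a'$ ; de même pour $i=n$, la condition $x_n=x_{n+1}=b$ donne $x'_n=f(b)=b'=x'_{n+1}$ via $f(b)=b'$. Dans tous les cas, $f^n(x_1,\ldots,x_n)\in Y_{a'b',i}^{(n)}\subset Y_{a'b'}^{(n)}$, d'où l'inclusion voulue.

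Le principal (et seul) obstacle conceptuel réside dans ces cas de bord : c'est précisément la préservation des points marqués qui garantit que $f^n$ respecte les faces extrêmes $Y_{ab,0}^{(n)}=\{a\}\times X^{n-1}$ et $Y_{ab,n}^{(n)}=X^{n-1}\times\{b\}$ ; sans cette hypothèse, l'application ne serait pas bien définie à valeurs dans $\mathbf{Paires}$. Une fois l'inclusion établie, il resterait seulement à constater que les axiomes de foncteur se transportent sans changement depuis ceux de $(-)^n$ sur $\mathbf{Top}$, la restriction aux morphismes de paires étant compatible aux compositions et aux identités, ce qui achève la vérification.
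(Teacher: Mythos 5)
Votre démonstration est correcte et suit essentiellement la même approche que celle du papier : la preuve du papier se réduit à la phrase \og on vérifie que si $f$ est un morphisme dans $\mathbf{Top}_2$, alors $f(Y_{\underline X}(n))\subset Y_{\underline X'}(n)$ \fg, c'est-à-dire exactement l'inclusion que vous établissez en détail. Votre rédaction explicite simplement cette vérification, cas par cas sur l'indice $i$, en soulignant à juste titre que les cas de bord $i=0$ et $i=n$ sont ceux où les conditions $f(a)=a'$ et $f(b)=b'$ interviennent réellement.
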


\begin{proof}
On vérifie que si $\underline X=(X,a,b)$ et $\underline X'=(X',a',b')$ sont des objets de $\mathbf{Top}_2$ et si 
$f : X\to X'$ est un morphisme
dans $\mathbf{Top}$ tel que $f(a)=a'$ et $f(b)=b'$, alors $f(Y_{\underline X}(n))\subset Y_{\underline X'}(n)$. 
\end{proof}

\begin{lemdef}
Pour $\underline X\in\mathbf{Top}_2$, on définit $\mathbf H_n(\underline X):=\mathrm{H}_n(X^n,Y_{\underline X}(n))$. 
L'application $\underline X\mapsto \mathbf H_n(\underline X)$ définit un foncteur 
covariant $\mathbf H_n : \mathbf{Top}_2\to\mathbf{Paires}$. 
\end{lemdef}

\begin{proof}
Provient de l'identification de $\mathbf H_n$ avec la composition du foncteur $(X,a,b)\mapsto (X^n,Y_{\underline X}(n))$ avec le foncteur 
homologie relative. 
\end{proof}

%\subsection{Une transformation naturelle $\nu_n : \mathbf F_n\to\mathbf H_n$}

Un corollaire du théorème \ref{thm:ppal} est: 
\begin{thm}\label{thm:transfo:nat}
Pour $n\geq0$ et $\underline X$ un objet de $\mathbf{Top}_2$, le morphisme de groupes 
$\mathbb Z\pi_1(a,b)\to\mathbf H_n(\underline X)$, $\gamma\mapsto [\tilde\gamma^n]$
induit un morphisme de groupes $\nu^{(n)}_{\underline X} : 
\mathbf F_n(\underline X)\to\mathbf H_n(\underline X)$. La correspondance 
$\underline X\mapsto \nu^{(n)}_{\underline X}$ est une transformation naturelle de $\mathbf F_n$ vers $\mathbf H_n$. 
\end{thm}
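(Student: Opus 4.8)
The plan is to read off the construction of $\nu^{(n)}_{\underline X}$ directly from Theorem \ref{thm:ppal} and then reduce naturality to the strict functoriality of the assignment $\tilde\gamma\mapsto\tilde\gamma^{(n)}$. First I would note that for $\underline X=(X,a,b)$ one has, by the definitions of $\mathbf F_n$ and $\mathbf H_n$, the identifications $\mathbf F_n(\underline X)=\mathbb Z\pi_1(a,b)/(\mathbb Z\pi_1(a,b)(\mathbb Z\pi_1(a))_+^{n+1})$ and $\mathbf H_n(\underline X)=\mathrm H_n(X^n,Y^{(n)}_{ab})$. Under these identifications the map $\gamma\mapsto[\tilde\gamma^{(n)}]$ factors through $\mathbf F_n(\underline X)$ by Theorem \ref{thm:ppal}(b), and $\nu^{(n)}_{\underline X}$ is exactly the induced linear map $\overline F^{(n)}_{ab}$. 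Hence the existence of $\nu^{(n)}_{\underline X}$ requires no new argument, and the whole content is the naturality assertion.

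For naturality I would fix a morphism $f:\underline X=(X,a,b)\to\underline X'=(X',a',b')$ of $\mathbf{Top}_2$, i.e. a continuous $f:X\to X'$ with $f(a)=a'$ and $f(b)=b'$, and prove that $\mathbf H_n(f)\circ\nu^{(n)}_{\underline X}=\nu^{(n)}_{\underline X'}\circ\mathbf F_n(f)$. Since $\mathbf F_n(\underline X)$ is generated, as an abelian group, by the images of the classes $\gamma\in\pi_1(a,b)$, and all three maps in sight are $\mathbb Z$-linear, it suffices to test the equality on such a generator $[\gamma]$. Here $\mathbf F_n(f)$ sends $[\gamma]$ to $[f\circ\gamma]$ (a class in $\pi_1(a',b')$, legitimate because $f(a)=a'$ and $f(b)=b'$ force $f\circ\tilde\gamma\in\mathrm{Chem}(a',b')$), while $\mathbf H_n(f)$ is the map on relative homology induced by the morphism of pairs $f^n:(X^n,Y_{\underline X}(n))\to(X'^n,Y_{\underline X'}(n))$, whose existence is guaranteed by the functoriality of $(X,a,b)\mapsto(X^n,Y_{\underline X}(n))$ recalled above.

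The key step is then a single chain-level identity. By Définition \ref{def:2:1:2311}, $\tilde\gamma^{(n)}=\tilde\gamma^n\circ\mathrm{can}_n$; since $(f\circ\tilde\gamma)^n=f^n\circ\tilde\gamma^n$ as maps $[0,1]^n\to X'^n$, composition with $\mathrm{can}_n$ yields
$$
(f\circ\tilde\gamma)^{(n)}=f^n\circ\tilde\gamma^{(n)}=(f^n)_*(\tilde\gamma^{(n)})
$$
as singular simplices in $C_n(X'^n)$. Passing to relative homology classes gives $\nu^{(n)}_{\underline X'}(\mathbf F_n(f)[\gamma])=[(f\circ\tilde\gamma)^{(n)}]=\mathbf H_n(f)(\nu^{(n)}_{\underline X}[\gamma])$, which is the desired commutativity on generators, hence by linearity on all of $\mathbf F_n(\underline X)$. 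I do not expect a genuine obstacle: Theorem \ref{thm:ppal} already carries the hard work (well-definedness of $F_n$ and the vanishing of the alternating sum), so the only care needed is the bookkeeping of the identifications of source and target with the objects of Theorem \ref{thm:ppal} and the observation that naturality may be tested on the generating classes of paths.
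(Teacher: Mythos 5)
Your proposal is correct and takes essentially the same route as the paper: the paper presents this theorem as a direct corollary of Theorem \ref{thm:ppal}, the existence of $\nu^{(n)}_{\underline X}$ being precisely the factorization $\overline F^{(n)}_{ab}$ of part (b), with naturality left implicit. Your reduction to generators $[\gamma]$ and the chain-level identity $(f\circ\tilde\gamma)^{(n)}=f^n\circ\tilde\gamma^{(n)}$ (from $\tilde\gamma^{(n)}=\tilde\gamma^n\circ\mathrm{can}_n$ and the pair-morphism $f^n:(X^n,Y_{\underline X}(n))\to((X')^n,Y_{\underline X'}(n))$) is exactly the routine verification the paper omits.
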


%\subsection{Lien avec le travail de Beilinson}???

\medskip\noindent{\bf Remerciements.} Le travail de B.E. a bénéficié du soutien du projet ANR “Project HighAGT ANR20-CE40-0016”.

\end{document}